\documentclass[11pt]{article}
\usepackage[utf8]{inputenc}
\usepackage[T1]{fontenc}
\usepackage{lmodern}
\usepackage[expansion=false]{microtype}
\usepackage{amsmath,amssymb,amsthm,mathtools}
\usepackage[margin=2.6cm]{geometry}
\usepackage{enumitem}
\usepackage{booktabs}
\usepackage{hyperref}

\theoremstyle{plain}
\newtheorem{theorem}{Theorem}[section]
\newtheorem{proposition}[theorem]{Proposition}
\newtheorem{lemma}[theorem]{Lemma}
\newtheorem{corollary}[theorem]{Corollary}
\theoremstyle{definition}
\newtheorem{definition}[theorem]{Definition}
\newtheorem{example}[theorem]{Example}
\newtheorem{problem}[theorem]{Problem}
\theoremstyle{remark}
\newtheorem{remark}[theorem]{Remark}

\newcommand{\Sy}{\mathcal S}
\newcommand{\ZR}{\mathbb Z[\mathbb R]}
\newcommand{\NR}{\mathbb N_0[\mathbb R]}
\newcommand{\cO}{\mathcal O}
\newcommand{\cF}{\mathcal F}
\newcommand{\cG}{\mathcal G}
\newcommand{\cT}{\mathcal T}
\newcommand{\cP}{\mathcal P}

\newcommand{\cE}{\mathcal E}
\newcommand{\Gr}{\operatorname{Gr}}

\title{The semiring of spectral duality skeletons:\\
cancellation, characters and factorization}
\author{J.~A.~Vega Coso\\
\small IUFFyM, Universidad de Salamanca\\
\small Salamanca, Spain\\
\small \href{mailto:jantovc@gmail.com}{jantovc@gmail.com}}
\date{}

\begin{document}
\maketitle

\begin{abstract}
In this paper we study weighted dual skeletons, which consist of a finite set with an involution and weights compatible with it, and which arise from spectral duality structures (SDS). Their isomorphism classes, under the Cartesian product and the disjoint union, form a graded commutative semiring with identity, which we identify completely: each class is determined by its degree, by a symmetric element $P$ of the group semiring $\mathbb N_0[\mathbb R]$ (with $\mathbb N_0=\{0,1,2,\dots\}$) and by its number $f$ of fixed points, and both operations are carried out componentwise. The multiplicative monoid is not cancellative, and the only obstruction is that multiplying by a skeleton without fixed points loses the information carried by $f$. The core of the paper is factorization. The reduced monoid is atomic with bounded lengths, but factorization is not unique, the elasticity is infinite, and not even the divisor-closed submonoid of skeletons with a single fixed point is half-factorial. For skeletons without fixed points, every reducible skeleton has a factor consisting of a single orbit exactly when the number of orbits is $8$, $16$, a prime or twice a prime, and in these cases this gives an explicit irreducibility criterion. Irreducibility is generic in almost all types, and progression skeletons factor according to mixed-radix number systems, with all their factorizations of the same length. Finally, we describe a continuous family of characters that, together with the degree and the number of fixed points, forms a complete invariant, the Grothendieck group and ring, and the interpretation of the semiring as an analogue of a crossed Burnside semiring.
\end{abstract}

\medskip
\noindent\textbf{MSC 2020:} 20M13, 20M14, 13A05, 16Y60, 19A22, 60J10.\quad
\textbf{Keywords:} commutative semiring; non-unique factorization; half-factoriality; Burnside ring; Grothendieck group; spectral duality structure.

\section{Introduction}\label{sec:intro}

This paper continues the algebraic study of spectral duality structures (SDS) and their weighted dual skeletons begun in \cite{PaperVI}. Let $\NR$ be the group semiring of $(\mathbb R,+)$ with nonnegative integer coefficients: its elements are the finite sums $P=\sum_tc_tx^t$, with $c_t\in\mathbb N_0=\{0,1,2,\dots\}$ and real exponents. We say that $P$ is \emph{symmetric} if the coefficient $c_t$ of $x^t$ equals the coefficient $c_{-t}$ of $x^{-t}$ for all $t$, that is, if $P(x^{-1})=P(x)$; for instance, $x^t+x^{-t}$ and $1+x^s+x^{-s}$ are symmetric, and $x^t$ with $t\ne0$ is not. We write $c_0(P)$ for its constant term. Let $\Sy=\{P\in\NR:\ P\ne0,\ c_t=c_{-t}\ \text{for all }t\}$ be the set of nonzero symmetric elements of $\NR$, and $\Sy^\sharp$ the set of pairs $(P,f)$ with $P\in\Sy$ and $f$ an integer such that
\[
0\le f\le c_0(P),\qquad f\equiv c_0(P)\pmod 2 .
\]
We define the operations
\[
(P,f)+(Q,g)=(P+Q,\,f+g),\qquad (P,f)(Q,g)=(PQ,\,fg).
\]
We show that $\Sy^\sharp$ is closed under these operations and that, together with the zero, which corresponds to the empty skeleton, it is a commutative semiring with identity. Our subject is its multiplicative arithmetic: which elements are irreducible, how the others factor, and to what extent factorization is unique.

The elements of $\Sy^\sharp$ are the isomorphism classes of \emph{weighted dual skeletons} of a fixed degree: finite sets with an involution and weights compatible with it (Section~\ref{sec:setting}). The number $f$ is the number of fixed points of the involution, and $P=\sum_zx^{\log r_z}$ records the weights through the quotients $r_z$. Fixed points have $r_z=1$, and orbits with $r_z=1$ contribute $2$ to the constant term; hence the two conditions on $f$. The Cartesian product of skeletons corresponds to the product of pairs, and the disjoint union of skeletons of the same degree to the sum (Theorem~\ref{thm:semiring}). We work with skeletons rather than complete SDS because the product of two complete structures, although it is again a complete SDS, loses in general the linear independence of its spectral functions, which is the hypothesis of the rank theorem (Remark~\ref{rem:complete}); the classes of skeletons, on the other hand, form a semiring graded by the degree (Definition~\ref{def:graded} and Proposition~\ref{prop:graded}). The degree $K>0$ is multiplicative. Since the one-point skeletons are the units of the semiring, one for each degree, the degree plays no role in factorization, which takes place in $\Sy^\sharp$.

An example shows the central phenomenon. Consider three skeletons: the one formed by a single orbit of ratio $t\ge0$, whose pair is $\cO(t)=(x^t+x^{-t},0)$; the one formed by two fixed points and no orbit, with pair $\cF_2=(2,2)$; and the one formed by a single orbit of ratio $0$, with pair $\cO(0)=(2,0)$. Then
\[
\cO(t)\,\cF_2=(2x^t+2x^{-t},\,0)=\cO(t)\,\cO(0),
\]
but $\cF_2\ne\cO(0)$: both have $P=2$, but the first has two fixed points and the second has none. So the multiplicative monoid is not cancellative: multiplying by an element with $f=0$ loses the information about the fixed points, and we prove that this is the only obstruction to cancellation (Theorem~\ref{thm:nocancel}). As a consequence, the Grothendieck ring, which we describe explicitly (Theorem~\ref{thm:ring}), has zero divisors.

\paragraph{Results on factorization.}
The reduced monoid is atomic and its factorization lengths are bounded (Proposition~\ref{prop:atomic}). However:
\begin{itemize}[nosep]
\item factorization is not unique (Theorem~\ref{thm:nonunique}), the monoid is not half-factorial (Theorem~\ref{thm:halffact}), and its elasticity is infinite (Theorem~\ref{thm:elast}). In the last two, the mechanism is, once again, the loss of the fixed points;
\item the divisor-closed submonoid of skeletons with $f=1$, where this mechanism does not act, is not half-factorial either (Theorem~\ref{thm:f1});
\item when $f=0$ and the number $k\ge2$ of orbits is $8$, $16$, a prime or twice a prime, every reducible skeleton has a factor consisting of a single orbit, and this gives an explicit criterion for irreducibility in terms of \emph{centres}. For every other $k\ge2$ there are reducible skeletons with no such factor, and the smallest such $k$ is $9$ (Theorem~\ref{thm:orbitfactor}, Example~\ref{ex:k9}). In general, reducibility is decided by a finite search (Proposition~\ref{prop:general});
\item irreducibility is generic in almost all types (Theorem~\ref{thm:generic});
\item \emph{progression skeletons} (Example~\ref{ex:ruin}), whose exponents form a symmetric arithmetic progression and which include the skeleton of the gambler's ruin, where the theory began, have symmetric $q$-integers as their $\Phi$. Their factorizations are exactly de~Bruijn's mixed-radix number systems, and all factorizations into irreducibles have the same length (Theorem~\ref{thm:ruin}).
\end{itemize}
We also describe a continuous family of characters of the semiring which, together with the degree and $f$, forms a complete invariant (Proposition~\ref{prop:chi}), and we study the multiplicative Grothendieck group, which in the commensurable case is determined by Pólya's theorem (Theorem~\ref{thm:polya}).

\paragraph{Related work.}
For $\mathbb N_0[x]$, the multiplicative monoid of its nonzero elements is not half-factorial and has infinite elasticity \cite{CCMS2009} (see also \cite{CampaniniFacchini}). More generally, Chapman and Polo \cite{ChapmanPolo} study the multiplicative arithmetic of semidomains of the form $S[M]$, with $S$ a semidomain and $M$ a torsion-free monoid: they characterize bounded, finite and unique factorization, and find large classes with infinite elasticity. Gotti and Polo \cite{GottiPolo} study polynomial semidomains. In this language, $\Sy$ is the multiplicative submonoid of symmetric elements of the monoid semidomain $\NR$. The results of this paper that involve only $P$ belong to that circle of questions, with two differences: the exponents are real and the elements are symmetric. Those that involve $f$ (the failure of cancellation, the zero divisors of the Grothendieck ring and the mechanism behind the infinite elasticity) have no counterpart there. We use the terminology of factorization theory from \cite{GHK}.

Two other lines of work are close to ours:
\begin{enumerate}
\item The theory of Burnside rings. Skeletons of fixed degree are exactly the finite $C_2$-sets over the $C_2$-monoid $(\mathbb R,+)$, with $C_2$ acting by $t\mapsto-t$ (Proposition~\ref{prop:burnside}). The pair $(P,f)$ consists of their marks, the parity condition is a Dress congruence, and the failure of cancellation is the classical zero divisor of the Burnside ring of $C_2$. The semiring is thus an analogue, for an infinite monoid, of the semirings of crossed $G$-sets whose Grothendieck rings are the crossed Burnside rings of Oda and Yoshida \cite{OdaYoshida} (Section~\ref{subsec:burnside}). The additive and ring structures follow from this theory; the contribution of this paper is the multiplicative arithmetic of the semiring.
\item The factorization of finite graphs. Van de Woestijne \cite{vdWoestijne2012} studies, through polynomials with nonnegative coefficients, how uniqueness of factorization depends on the number of components, under additional hypotheses. Imrich, Klep and Smertnig \cite{ImrichKlepSmertnig} embed graph semirings in monoid algebras, observe that $\mathbb N_0[Y]$ does not have unique factorization, and propose the study of its non-unique factorizations \cite[Remark~3.17]{ImrichKlepSmertnig}. The failure of cancellation is also reminiscent of the bipartite exceptions to Lovász's cancellation theorem for the direct product of graphs \cite{Lovasz1971,Hammack2008}.
\end{enumerate}

\paragraph{Origin.}
Skeletons are the combinatorial part of SDS, which arose to explain why certain distributions make the absorption probabilities of resetting processes independent of the resetting mechanism, with a universal value $C^\ast=1/(1+\sqrt K)$ \cite{PaperI,PaperII,PaperIII,PaperIV,PaperV}. The algebra of these structures (operations, morphisms and classification) is studied in \cite{PaperVI}, and their axioms and the internal rigidity of a single structure in \cite{PaperVII}. This paper does not need that motivation, but we return to it in Section~\ref{sec:burnside}. There, the number of $C_2$-orbits and the number of free orbits, which are not marks but are computed from them by the orbit-counting lemma, give the dynamical invariants of the structures: the dimension of the separatrix of neutral distributions and the response rank. Moreover, neutral distributions of the factors give, by tensor product, neutral distributions of the product, with an explicit codimension. Which structures are realized by positive Markov processes, and whether the realizable classes form a subsemiring, is a question for the line of work on realizability and is not treated here.

\paragraph{Organization.}
Section~\ref{sec:setting} defines skeletons and the semiring; Section~\ref{sec:cancel} deals with cancellation, characters and the Grothendieck group and ring; Section~\ref{sec:fact} contains the factorization results, and Section~\ref{subsec:ruin} the progression skeletons. Section~\ref{sec:burnside} explains the Burnside viewpoint, orbit counting and the dynamical origin, and Section~\ref{sec:open} collects open problems.

\section{Skeletons and the semiring of classes}\label{sec:setting}

In this section we fix the object of study and build the tool we will use in the rest of the paper. We first recall what a skeleton is and how two skeletons are multiplied and added. We then attach to each skeleton a pair $(\Phi,f)$, formed by a symmetric polynomial $\Phi$ with real exponents and the number $f$ of fixed points of the involution. The main result of the section, Theorem~\ref{thm:semiring}, says that this pair, together with the degree, determines the skeleton, and that operations on skeletons become operations on pairs. From then on, questions about skeletons become questions about polynomials, and this is why the paper is about arithmetic. We also fix the notation for the examples, organize the classes into a graded semiring, and end with the progression skeletons, a family of examples that will reappear throughout the paper.

\subsection{Skeletons}

We follow the definition of \cite[Definition~2.1]{PaperVI}, which corresponds to axioms (A1)--(A4) of \cite{PaperVII}, with one difference: we allow $Z$ to be empty. A \emph{weighted dual skeleton of degree $K>0$}, denoted $\cE=(Z,\sigma,\kappa)$, consists of a finite set $Z$, possibly empty, whose elements we call \emph{sites}, an involution $\sigma$ of $Z$, and a weight $\kappa:Z\to(0,\infty)$ such that
\[
\kappa(z)\,\kappa(\sigma z)=K\qquad\text{for all }z\in Z .
\]
If $Z$ is not empty, the degree is determined by $\kappa$, since $K=\kappa(z)\kappa(\sigma z)$ for any site $z$. If $Z$ is empty, $\kappa$ is the empty map and the condition holds vacuously for every $K$; so in that case the degree has to be given separately, as part of the definition, and there is an \emph{empty skeleton} $\emptyset_K$ for each $K>0$. Two skeletons are \emph{isomorphic} if they have the same degree and there is a bijection between their sites that commutes with the involutions and preserves the weights \cite[Definition~7.1]{PaperVI}. When $Z$ is not empty, equality of degrees is automatic.

Condition (A1) of \cite{PaperVII} asks for $Z$ to be finite and non-empty. We keep finiteness and drop non-emptiness. The empty skeleton satisfies the other conditions vacuously, so non-emptiness only excludes this case. Admitting it changes nothing for non-empty skeletons, and it gives the zero of the semiring directly (Section~\ref{subsec:graded}).

We write $m=|Z|$ for the number of sites. The \emph{orbit} of a site $z$ is the set of sites obtained from $z$ by applying $\sigma$; since $\sigma^2$ is the identity, it is simply $\{z,\sigma z\}$, and there are only two possibilities. If $\sigma z=z$, the orbit is $\{z\}$, and we say that $z$ is a \emph{fixed point} (a \emph{spectrally neutral site} in \cite{PaperII}). If $\sigma z\ne z$, the orbit is a pair $\{z,\sigma z\}$ of two distinct sites interchanged by $\sigma$ (a \emph{spectral pair} in \cite{PaperII}). The orbits form a partition of $Z$, so that, if $f$ is the number of fixed points and $k$ the number of pairs, then $m=2k+f$.

\emph{Convention.} Except in Section~\ref{sec:burnside}, we reserve the word \emph{orbit} for the pairs $\{z,\sigma z\}$ with $z\ne\sigma z$, and count the fixed points separately: a skeleton has $f$ fixed points and $k$ orbits. In the language of Section~\ref{sec:burnside}, where $\sigma$ is seen as an action of the group $C_2$, these orbits are the free orbits, and the fixed points are the one-element orbits.

The weights $\kappa(z)$ and $\kappa(\sigma z)$ of the two sites of an orbit are compared through the quotient
\[
r_z=\Bigl(\frac{\kappa(\sigma z)}{\kappa(z)}\Bigr)^{1/2},\qquad\text{so that}\qquad r_{\sigma z}=r_z^{-1}\quad\text{and}\quad r_z=1\ \text{at fixed points}.
\]
In \cite[Definition~2.7]{PaperVI} and in \cite{PaperVII} this quotient is called the \emph{orbit ratio}. Here the \emph{ratio} of an orbit $O=\{z,\sigma z\}$ is $t_O=|\log r_z|\ge0$, which does not depend on which site of the orbit is chosen. An orbit of ratio $t$ will contribute $x^t+x^{-t}$ to the polynomials with which we describe skeletons, and a balanced orbit has ratio $0$ in this paper and ratio $1$ in \cite{PaperVI,PaperVII}. Note that an orbit of ratio $0$ has equal weights but is \emph{not} a fixed point: it still consists of two distinct sites interchanged by $\sigma$. This difference between a fixed point and a balanced orbit will be essential in Section~\ref{sec:cancel}.

The data just introduced are enough to describe a skeleton up to isomorphism. The following result, proved in \cite{PaperVI}, is the basis of everything that follows.

\begin{theorem}[Classification {\cite[Theorem~7.2]{PaperVI}}]\label{thm:class}
The isomorphism class of $\cE$ is determined by its degree $K$, its number of fixed points ($f$) and the multiset of the ratios of its $k$ orbits ($\{t_O\}$). Conversely, for every $K>0$, every $f\ge0$ and every multiset of $k$ nonnegative real numbers there is a skeleton with these data; for $f=k=0$ it is the empty skeleton $\emptyset_K$.
\end{theorem}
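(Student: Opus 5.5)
The plan is to prove both directions by working orbit by orbit: I will build an explicit normal form for each type of orbit and then assemble the isomorphism site by site. The key is a local normal form. Fix the degree $K$ and a site $z$. If $z$ is a fixed point, the condition $\kappa(z)^2=K$ forces $\kappa(z)=\sqrt K$, so all fixed points of all skeletons of degree $K$ carry the same weight. If $O=\{z,\sigma z\}$ is an orbit, then from $\kappa(z)\kappa(\sigma z)=K$ and $r_z^2=\kappa(\sigma z)/\kappa(z)$ we get
\[
\kappa(z)=\sqrt K\,r_z^{-1},\qquad \kappa(\sigma z)=\sqrt K\,r_z .
\]
Hence the unordered pair of weights on $O$ is $\{\sqrt K e^{-t_O},\sqrt K e^{t_O}\}$. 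It depends only on $K$ and the ratio $t_O=|\log r_z|$.

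For the first direction, take two skeletons $\cE,\cE'$ with the same $K$, the same $f$ and the same multiset of ratios. I will choose a bijection between their fixed points, which is possible because there are $f$ of each. I will also choose a bijection between their orbits that preserves ratios, which is possible because the multisets agree. On each matched pair of orbits $O\leftrightarrow O'$ with common ratio $t$, I send the site of weight $\sqrt K e^{t}$ to the site of weight $\sqrt K e^{t}$, and the other site to the other site. When $t=0$ both sites have weight $\sqrt K$, so either choice works. On fixed points the map is weight-preserving by the local computation. It commutes with $\sigma$ because it maps orbits to orbits, exchanging the two sites consistently, and maps fixed points to fixed points. It is a bijection, so $\cE\cong\cE'$.

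Before that, I need to check that the three data are actually invariants of the isomorphism class. An isomorphism commutes with $\sigma$, so it sends fixed points to fixed points and orbits to orbits; in particular it preserves $f$ and $k$. It preserves weights, so it preserves $r_z$ and therefore $t_O$. The degree is preserved by definition. This settles the determination part.

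For the converse, given $K$, $f$ and a multiset $\{t_1,\dots,t_k\}$, I will take
\[
Z=\{p_1,\dots,p_f\}\sqcup\{a_1,b_1,\dots,a_k,b_k\},
\]
with $\sigma$ fixing each $p_i$ and swapping $a_j\leftrightarrow b_j$. The weights are $\kappa(p_i)=\sqrt K$, $\kappa(a_j)=\sqrt K e^{-t_j}$ and $\kappa(b_j)=\sqrt K e^{t_j}$. A direct check shows that $\kappa(z)\kappa(\sigma z)=K$ everywhere and that the orbit $\{a_j,b_j\}$ has ratio $t_j$. For $f=k=0$ this gives $Z=\emptyset$, which is $\emptyset_K$ by definition, with the degree supplied separately.

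There is no real obstacle here. The only delicate points are bookkeeping. First, a balanced orbit with $t=0$ must be kept distinct from a pair of fixed points, since they have equal weights but a different involution. Second, in the empty case the degree is not recoverable from $\kappa$, so it has to be carried along as part of the data, exactly as the statement does.
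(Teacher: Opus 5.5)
Your proof is correct and follows the paper's argument: the identity $\kappa(z)=\sqrt K\,r_z^{-1}$, a bijection that matches fixed points and equal-ratio orbits site by site, and the explicit model with weights $\sqrt K$ and $\sqrt K\,e^{\mp t}$. Your additional check that $K$, $f$ and the ratios are isomorphism invariants is left implicit in the paper and does no harm.
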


\begin{proof}
Since $\kappa(z)\kappa(\sigma z)=K$ and $r_z^2=\kappa(\sigma z)/\kappa(z)$, we have $\kappa(z)=\sqrt K\,r_z^{-1}$: the weights are determined by $K$ and by the values $\log r_z$. If two skeletons have the same $K$, the same $f$ and the same ratios with multiplicity, a bijection between their fixed points, together with a bijection between orbits of the same ratio that sends the site with $\log r_z\ge0$ to the corresponding site, commutes with the involutions and preserves $\log r$, hence preserves $\kappa$. Conversely, the skeleton with $f$ fixed points of weight $\sqrt K$ and, for each $t$ in the multiset, an orbit with weights $\sqrt K\,e^{\mp t}$ has the given data.
\end{proof}

\paragraph{Operations.}
There are two natural ways of building new skeletons from two given ones \cite[Section~4]{PaperVI}. The \emph{product} $\cE_1\times\cE_2$ has as sites the pairs $(z,w)\in Z_1\times Z_2$, as involution $\sigma_1\times\sigma_2$, and as weight $\kappa_1\otimes\kappa_2$, that is, $(z,w)\mapsto\kappa_1(z)\kappa_2(w)$. It is a skeleton of degree $K_1K_2$, and its quotients multiply: $r_{(z,w)}=r_zr_w$ \cite[Proposition~4.2]{PaperVI}.

The second operation only makes sense for skeletons of the same degree. If $\cE_1$ and $\cE_2$ both have degree $K$, their \emph{disjoint union} $\cE_1\sqcup\cE_2$ has as sites $Z_1\sqcup Z_2$, with the involution and the weight of each part, and it is again a skeleton of degree $K$. If the degrees were different, the product $\kappa(z)\kappa(\sigma z)$ would be $K_1$ at the sites of $\cE_1$ and $K_2$ at those of $\cE_2$, and there would be no common constant.

Both operations are compatible with isomorphism, and they are associative and commutative up to isomorphism. The one-point skeleton with $\kappa=K=1$ is the neutral element for the product.

\subsection{The encoding}

To study the classes of skeletons under these operations, we encode them by an algebraic object in which the product of skeletons becomes an ordinary product. The idea is to record, for each site, the value of $\log r_z$, and to do it with exponents, so that they add up under multiplication.

Let $\ZR$ be the group ring of $(\mathbb R,+)$: its elements are the finite sums $P=\sum_tc_tx^t$, with coefficients $c_t\in\mathbb Z$ and exponents $t\in\mathbb R$, which are added and multiplied like polynomials, with the rule $x^sx^t=x^{s+t}$. Let $\NR$ be the subsemiring of elements with coefficients in $\mathbb N_0$. For an arbitrary element $P$ of $\ZR$, not necessarily symmetric nor with non-negative coefficients, we write $c_t(P)$ for the coefficient of $x^t$; in particular, $c_0(P)$ is its constant term. We say that $P$ is \emph{symmetric} if $c_t(P)=c_{-t}(P)$ for all $t$, and we write $\Sy\subset\NR$ for the set of nonzero symmetric elements.

To each skeleton we attach
\[
\Phi(\cE)=\sum_{z\in Z}x^{\log r_z}=f+\sum_O\bigl(x^{t_O}+x^{-t_O}\bigr)\in\NR .
\]
The expression $f+\sum_O(x^{t_O}+x^{-t_O})$ comes from grouping the sites: each fixed point contributes $x^0=1$, and each orbit of ratio $t_O$ contributes $x^{t_O}+x^{-t_O}$, because its two sites have opposite values of $\log r$. This is why $\Phi(\cE)$ is symmetric. If $Z$ is not empty, it is also nonzero, that is, $\Phi(\cE)\in\Sy$. The empty skeleton has $\Phi=0$ and $f=0$.

We fix notation for a few simple skeletons, which we use as examples throughout the paper.

\paragraph{Blocks.}
We denote by $\cO(t)$ the skeleton consisting of a single orbit of ratio $t$, that is, two sites interchanged by $\sigma$ and no fixed point, with $\Phi=x^t+x^{-t}$, $m=2$ and $f=0$; by $\cF_n$ the skeleton consisting of $n$ fixed points and no orbit, with $\Phi=n$ and $m=f=n$; and by $\cT(s)$ the skeleton consisting of one fixed point and one orbit of ratio $s$, with $\Phi=1+x^s+x^{-s}$, $m=3$ and $f=1$. The degree of these blocks is arbitrary, and we omit it when it plays no role.

The encoding $\Phi$ is compatible with both operations, and so are the number of fixed points and the number of sites. Observe that $m$ is obtained from $\Phi$ by evaluating at $x=1$: each site contributes $1^{\log r_z}=1$, so that $\Phi(\cE)(1)=m$.

\begin{proposition}[Additivity and multiplicativity]\label{prop:mult}
$\Phi$ is additive for the disjoint union and multiplicative for the product:
\[
\Phi(\cE_1\sqcup\cE_2)=\Phi(\cE_1)+\Phi(\cE_2),\qquad
\Phi(\cE_1\times\cE_2)=\Phi(\cE_1)\,\Phi(\cE_2).
\]
The same holds for the number of fixed points and for the number of sites: $f(\cE_1\sqcup\cE_2)=f_1+f_2$ and $f(\cE_1\times\cE_2)=f_1f_2$, and similarly for $m$.
\end{proposition}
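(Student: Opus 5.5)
The argument works site by site, using the definition $\Phi(\cE)=\sum_{z\in Z}x^{\log r_z}$ rather than the grouped form with orbits and fixed points. That way both identities become statements about how sites and their quotients $r_z$ behave under each operation.

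For the disjoint union, the sites of $\cE_1\sqcup\cE_2$ are $Z_1\sqcup Z_2$, with the involution and weight of each part. The quotient $r_z=(\kappa(\sigma z)/\kappa(z))^{1/2}$ of a site $z\in Z_i$ is therefore the same computed in $\cE_i$ or in the union. Splitting the sum over $Z_1\sqcup Z_2$ gives $\Phi(\cE_1)+\Phi(\cE_2)$. A site is fixed in the union exactly when it is fixed in its own part, so $f$ adds; $m$ adds trivially.

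For the product, I would recompute the quotient at a site $(z,w)$ directly, which also reproves \cite[Proposition~4.2]{PaperVI}. Since $\sigma(z,w)=(\sigma_1z,\sigma_2w)$ and $\kappa(z,w)=\kappa_1(z)\kappa_2(w)$, we get
\[
r_{(z,w)}^2=\frac{\kappa_1(\sigma_1z)\kappa_2(\sigma_2w)}{\kappa_1(z)\kappa_2(w)}=r_z^2r_w^2 .
\]
Taking positive square roots gives $r_{(z,w)}=r_zr_w$, hence $\log r_{(z,w)}=\log r_z+\log r_w$. Then
\[
\Phi(\cE_1\times\cE_2)=\sum_{(z,w)}x^{\log r_z+\log r_w}=\Bigl(\sum_z x^{\log r_z}\Bigr)\Bigl(\sum_w x^{\log r_w}\Bigr),
\]
using the rule $x^sx^t=x^{s+t}$ in $\ZR$ and distributivity.

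For fixed points in the product, $(z,w)$ is fixed by $\sigma_1\times\sigma_2$ if and only if $\sigma_1z=z$ and $\sigma_2w=w$. Hence the fixed-point set is $\mathrm{Fix}(\sigma_1)\times\mathrm{Fix}(\sigma_2)$ and $f=f_1f_2$. For sites, $m=|Z_1\times Z_2|=m_1m_2$; alternatively, evaluate $\Phi$ at $x=1$.

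The only point needing care is that the passage from the per-site sum to the grouped expression $f+\sum_O(x^{t_O}+x^{-t_O})$ is never used here, so working site by site avoids tracking how orbits of the factors combine into orbits of the product. The empty skeleton ($\Phi=0$, $f=m=0$) is covered automatically, since empty sums and empty products of sets behave correctly.
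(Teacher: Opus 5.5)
Your proposal is correct and follows the paper's proof essentially line by line. You split the site sum for the union, use $r_{(z,w)}=r_zr_w$ to factor the sum for the product, identify the fixed points of $\sigma_1\times\sigma_2$ as pairs of fixed points, and count or evaluate at $x=1$ for $m$; the only difference is that you derive $r_{(z,w)}=r_zr_w$ from the weights instead of citing \cite[Proposition~4.2]{PaperVI}, which makes the argument self-contained.
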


\begin{proof}
For the disjoint union, the sum defining $\Phi$ splits into the sum over $Z_1$ and the sum over $Z_2$. For the product, since $r_{(z,w)}=r_zr_w$, we have $\log r_{(z,w)}=\log r_z+\log r_w$, and therefore
\[
\Phi(\cE_1\times\cE_2)=\sum_{(z,w)}x^{\log r_z+\log r_w}=\Bigl(\sum_{z}x^{\log r_z}\Bigr)\Bigl(\sum_{w}x^{\log r_w}\Bigr)=\Phi(\cE_1)\,\Phi(\cE_2).
\]
A pair $(z,w)$ is fixed by $\sigma_1\times\sigma_2$ if and only if both $z$ and $w$ are fixed, so $f(\cE_1\times\cE_2)=f_1f_2$; in the disjoint union the fixed points simply add up. For $m$ it is enough to evaluate the above identities at $x=1$.
\end{proof}

In particular, the proposition explains what happens to orbits under multiplication. Two orbits of ratios $t_1$ and $t_2$ give a skeleton with four sites and no fixed points, that is, with two orbits, and their ratios can be read off from the product of the two binomials.

\begin{remark}[Splitting of orbits]\label{rem:splitting}
We have
\[
(x^{t_1}+x^{-t_1})(x^{t_2}+x^{-t_2})=(x^{t_1+t_2}+x^{-t_1-t_2})+(x^{|t_1-t_2|}+x^{-|t_1-t_2|}),
\]
that is, $\cO(t_1)\times\cO(t_2)$ has two orbits, of ratios $t_1+t_2$ and $|t_1-t_2|$. This splitting of orbits in the product, observed in \cite[Remark~4.7]{PaperVI}, is therefore just an identity in the group ring. It is the mechanism behind almost all the factorizations of Section~\ref{sec:fact}.
\end{remark}

The polynomial $\Phi$, however, is not enough to describe a skeleton. Its constant term counts both the fixed points and the orbits of ratio $0$, since each of the latter contributes $x^0+x^{-0}=2$:
\[
c_0(\Phi)=f+2n_0,\qquad n_0=\text{number of orbits of ratio }0.
\]
Thus $\cF_2$ and $\cO(0)$ have the same $\Phi=2$ and are not isomorphic. This is why we record $f$ separately, as a second component. The identity $c_0(\Phi)=f+2n_0$ also says that $f\le c_0(\Phi)$ and that $f$ and $c_0(\Phi)$ have the same parity. These two conditions are the only restrictions, and they lead to the following definition.

\begin{definition}
Let
\[
\Sy^\sharp=\bigl\{(P,f)\in\Sy\times\mathbb N_0:\ f\le c_0(P),\ f\equiv c_0(P)\pmod 2\bigr\},
\]
with componentwise sum and product $(P,f)(Q,g)=(PQ,fg)$.
\end{definition}

For this definition to be useful, we have to check that the operations do not lead out of $\Sy^\sharp$, that is, that they are internal. For the sum this is clear; for the product we need to look at the constant term of a product.

\begin{lemma}\label{lem:closed}
$\Sy^\sharp$ is closed under sum and product, and the product is distributive over the sum. Consequently, $\Sy^\sharp_0=\Sy^\sharp\cup\{(0,0)\}$ is a commutative subsemiring with identity of $\NR\times\mathbb N_0$, with zero $(0,0)$ and identity $(1,1)$.
\end{lemma}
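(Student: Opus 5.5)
The plan is to verify closure directly from the definition and to inherit the remaining semiring axioms from the ambient semiring $\NR\times\mathbb N_0$.

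\textbf{Closure under sum.} Let $(P,f),(Q,g)\in\Sy^\sharp$. Then $P+Q$ is symmetric, and it is nonzero because its coefficients are nonnegative and $P\ne0$. Moreover $c_0(P+Q)=c_0(P)+c_0(Q)$, so both conditions add up: $f+g\le c_0(P+Q)$ and $f+g\equiv c_0(P+Q)\pmod 2$.

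\textbf{Closure under product.} Here the constant term of a product has to be computed. Symmetry of $PQ$ follows from the ring homomorphism $x^t\mapsto x^{-t}$, since $PQ(x^{-1})=P(x^{-1})Q(x^{-1})=PQ$. Nonvanishing of $PQ$ follows from nonnegativity: if $c_s(P)>0$ and $c_u(Q)>0$, then $c_{s+u}(PQ)\ge c_s(P)c_u(Q)>0$. For the constant term I would write
\[
c_0(PQ)=\sum_t c_t(P)c_{-t}(Q)=c_0(P)c_0(Q)+\sum_{t\ne0}c_t(P)c_t(Q),
\]
using $c_{-t}(Q)=c_t(Q)$. Pairing the $t$ and $-t$ terms shows that the sum over $t\ne0$ equals $2\sum_{t>0}c_t(P)c_t(Q)$, which is even and nonnegative. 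Hence $c_0(PQ)\equiv c_0(P)c_0(Q)\equiv fg\pmod 2$, because $f\equiv c_0(P)$ and $g\equiv c_0(Q)$ modulo $2$. For the inequality, $c_0(PQ)\ge c_0(P)c_0(Q)\ge fg$ by nonnegativity.

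\textbf{Semiring structure.} $\NR\times\mathbb N_0$ is a commutative semiring with componentwise operations, zero $(0,0)$ and identity $(1,1)$. The element $(1,1)$ lies in $\Sy^\sharp$ because $c_0(1)=1$. Adding $(0,0)$ keeps the set closed, since $(0,0)$ absorbs under product and is neutral under sum. So $\Sy^\sharp_0$ is a subsemiring, and distributivity, associativity and commutativity are inherited from the ambient semiring.

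The only step requiring care is the parity of $c_0(PQ)$, which is exactly where symmetry enters through the pairing of the $t$ and $-t$ terms. Alternatively, closure under product can be read off from Theorem~\ref{thm:class} and Proposition~\ref{prop:mult} by realizing the pairs as skeletons of degree $1$, but the direct computation above is self-contained.
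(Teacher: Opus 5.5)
Your proof is correct and follows the paper's argument essentially step for step. The same constant-term identity $c_0(PQ)=c_0(P)\,c_0(Q)+2\sum_{t>0}c_t(P)\,c_t(Q)$ gives both the inequality and the parity, symmetry comes from the reflection homomorphism, and the semiring axioms are inherited componentwise. The only minor difference is that you derive $PQ\ne0$ from nonnegativity of the coefficients rather than from the product of the top-degree terms, which is equally valid here.
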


\begin{proof}
For the sum, it is enough to add the inequalities and the congruences. For the product, let $(P,f),(Q,g)\in\Sy^\sharp$. The constant term of $PQ$ collects the products of terms with opposite exponents, $c_t(P)\,c_{-t}(Q)$; since $Q$ is symmetric, $c_{-t}(Q)=c_t(Q)$, and grouping $t$ with $-t$ gives
\[
c_0(PQ)=c_0(P)\,c_0(Q)+2\sum_{t>0}c_t(P)\,c_t(Q).
\]
This quantity is $\ge c_0(P)\,c_0(Q)\ge fg$, and it is congruent to $c_0(P)\,c_0(Q)$, hence to $fg$, modulo $2$. Moreover, $PQ$ is symmetric, because the reflection $x^t\mapsto x^{-t}$ is a ring homomorphism fixing $P$ and $Q$, and it is nonzero, because if $s$ and $t$ are the largest exponents of $P$ and $Q$, the coefficient of $x^{s+t}$ in $PQ$ is $c_s(P)c_t(Q)\ne0$. Distributivity is checked componentwise.
\end{proof}

We can now state the main result of the section. It says that $\Sy^\sharp$, together with the degree, describes exactly the classes of skeletons, with no loss of information and no superfluous elements, and that the operations on skeletons are those of $\Sy^\sharp$.

\begin{theorem}[Identification]\label{thm:semiring}
The map $[\cE]\mapsto(K,\Phi(\cE),f)$ is a bijection from the set of isomorphism classes of skeletons onto $\mathbb R_{>0}\times\Sy^\sharp_0$, which takes the empty skeletons to $\mathbb R_{>0}\times\{(0,0)\}$ and the non-empty ones to $\mathbb R_{>0}\times\Sy^\sharp$. It takes the product of skeletons to the product
\begin{equation}\label{eq:prodclases}
(K,P,f)(K',Q,g)=(KK',PQ,fg)
\end{equation}
and, for skeletons of the same degree, the disjoint union to the sum of $\Sy^\sharp$. In particular, if $\cE_1$ and $\cE_2$ have the same degree and $\cE$ is any skeleton,
\begin{equation}\label{eq:distrib}
\cE\times(\cE_1\sqcup\cE_2)\ \cong\ (\cE\times\cE_1)\sqcup(\cE\times\cE_2).
\end{equation}
\end{theorem}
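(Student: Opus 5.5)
The plan is to reduce everything to the Classification Theorem~\ref{thm:class} and Proposition~\ref{prop:mult}, passing between the data $(K,f,\{t_O\})$ and $(K,\Phi,f)$.

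\emph{Well-definedness and injectivity.} Isomorphic skeletons have the same degree, the same number of fixed points and the same multiset of ratios, so they have the same $(K,\Phi,f)$. For injectivity, suppose two skeletons share $(K,\Phi,f)$. We recover the multiset of ratios from $\Phi$ and $f$ as follows. For $t>0$, the coefficient $c_t(\Phi)$ is the number of orbits of ratio $t$. The number of orbits of ratio $0$ is $n_0=(c_0(\Phi)-f)/2$, using the identity $c_0(\Phi)=f+2n_0$. Theorem~\ref{thm:class} then shows that the two skeletons are isomorphic. In the empty case, $\Phi=0$ and $f=0$, so only the degree remains and it distinguishes the various $\emptyset_K$.

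\emph{Image and surjectivity.} A non-empty skeleton has $\Phi\ne0$ symmetric, and by the constant-term identity $f\le c_0(\Phi)$ with $f\equiv c_0(\Phi)\pmod 2$. Hence its image lies in $\mathbb R_{>0}\times\Sy^\sharp$, while empty skeletons go to $(K,0,0)$. Conversely, take $K>0$ and $(P,f)\in\Sy^\sharp$. Set $n_0=(c_0(P)-f)/2\in\mathbb N_0$, and form the multiset with $c_t(P)$ copies of $t$ for each $t>0$ and $n_0$ copies of $0$. Theorem~\ref{thm:class} gives a skeleton of degree $K$ with $f$ fixed points and these ratios. Its $\Phi$ is $f+2n_0+\sum_{t>0}c_t(P)(x^t+x^{-t})$, which equals $P$ by symmetry. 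This skeleton is non-empty because $P\ne0$.

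\emph{Operations and distributivity.} Proposition~\ref{prop:mult} gives $\Phi$ and $f$ of products and disjoint unions. The degree of a product is $K_1K_2$, and the disjoint union keeps the common degree. Together these give \eqref{eq:prodclases} and the sum rule. The one point needing care is the empty skeleton. A product with $\emptyset_K$ is empty of degree $KK'$, which matches $(KK',0,0)$. A disjoint union with $\emptyset_K$ returns the other skeleton, which matches adding $(0,0)$; the same-degree requirement holds by hypothesis.

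For \eqref{eq:distrib}, both sides are skeletons of degree $KK_1$, with $K_1=K_2$. By the preceding paragraph their images are $(KK_1,\Phi(\Phi_1+\Phi_2),f(f_1+f_2))$ and $(KK_1,\Phi\Phi_1+\Phi\Phi_2,ff_1+ff_2)$, which are equal by distributivity in $\NR\times\mathbb N_0$ (Lemma~\ref{lem:closed}). Injectivity then gives the isomorphism.

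The argument is essentially bookkeeping. The only step requiring thought is recovering $n_0$ from $(\Phi,f)$, which is exactly why $f$ must be recorded separately; the other place to be careful is keeping the degree as an explicit coordinate so that the empty skeletons are handled correctly.
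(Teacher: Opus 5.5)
Your proposal is correct and follows essentially the same route as the paper. It recovers $n_0=(c_0(\Phi)-f)/2$ and the coefficients $c_t(\Phi)$, $t>0$, which reduces injectivity and surjectivity to Theorem~\ref{thm:class}, then uses Proposition~\ref{prop:mult} for the operations and injectivity plus distributivity for \eqref{eq:distrib}; your explicit handling of well-definedness and of the empty skeletons is bookkeeping the paper leaves implicit.
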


\begin{proof}
\emph{Injectivity.} By the identity $c_0(\Phi)=f+2n_0$, the pair $(\Phi,f)$ determines $n_0$; and, for $t>0$, the coefficient of $x^t$ in $\Phi$ is the number of orbits of ratio $t$. So $(K,\Phi,f)$ determines $(K,f,\{t_O\})$, which determines the class by Theorem~\ref{thm:class}.

\emph{Surjectivity.} Given $(K,P,f)$ with $(P,f)\in\Sy^\sharp_0$, the two conditions defining $\Sy^\sharp$ say precisely that $n_0=(c_0(P)-f)/2$ is a nonnegative integer. We then take $f$ fixed points, $n_0$ orbits of ratio $0$ and $c_t(P)$ orbits of ratio $t$ for each $t>0$, all of degree $K$; by Theorem~\ref{thm:class} this skeleton exists, and its image is $(K,P,f)$. For $(P,f)=(0,0)$ it is the empty skeleton $\emptyset_K$.

\emph{Operations.} Formula~\eqref{eq:prodclases} and the corresponding one for the disjoint union are Proposition~\ref{prop:mult}, together with the multiplicativity of the degree. For~\eqref{eq:distrib}, both sides have the same image by Lemma~\ref{lem:closed} (distributivity in $\Sy^\sharp$), and by injectivity they are isomorphic.
\end{proof}

From now on we identify each isomorphism class with its triple $(K,\Phi,f)$ and, when the degree plays no role, with its pair $(\Phi,f)$.

\subsection{Units and the graded semiring}\label{subsec:graded}

Theorem~\ref{thm:semiring} leaves two questions to settle before we study factorization. The first is which elements are invertible for the product, because a factorization only makes sense up to such elements. The second is that only skeletons of the same degree can be added, so that the classes of skeletons, as they stand, do not form a semiring. We deal with them in turn.

\begin{remark}[Units]\label{rem:units}
The neutral element for the product is the class of the one-point skeleton of degree $1$, that is, $Z=\{z\}$, $\sigma=\mathrm{id}$ and $\kappa(z)=1$ (the degree is $\kappa(z)\kappa(\sigma z)=\kappa(z)^2=1$), which corresponds to the triple $(K,\Phi,f)=(1,1,1)$. A class is invertible (a \emph{unit}) if and only if it has $m=1$: indeed, $m$ is multiplicative and takes integer values $\ge0$, so $m(\cE)\,m(\cE')=1$ forces $m(\cE)=1$; in particular, the empty skeleton is not invertible. The classes with $m=1$ are those of the one-point skeletons, which correspond to the triples $(K,1,1)$, one for each degree $K>0$, and they are invertible, since $(K,1,1)(K^{-1},1,1)=(1,1,1)$. The group of units is therefore $\{(K,1,1):K>0\}\cong\mathbb R_{>0}$, and a unit carries no information other than its degree.

Consequently, multiplying by a unit only changes the degree, and two classes obtained from each other by multiplying by a unit have the same $(\Phi,f)$. Identifying each class with those obtained by multiplying it by a unit (that is, passing to the \emph{reduced monoid}) forgets exactly the degree, and what remains is $\Sy^\sharp$ with its product. This is why the factorization of Section~\ref{sec:fact} takes place in $\Sy^\sharp$, and the degree plays no role in it.

\emph{Convention.} The empty skeleton is absorbing for the product and plays no role in the multiplicative arithmetic. For this reason, in the rest of the paper and unless stated otherwise, \emph{skeleton} means non-empty skeleton, and the multiplicative monoid is that of the classes of non-empty skeletons, whose reduced monoid is $\Sy^\sharp$. This is what one does with the zero of a domain.
\end{remark}

For the sum, the empty skeleton $\emptyset_K$ is the neutral element for the disjoint union among skeletons of degree $K$. But the disjoint union of skeletons of different degrees is not a skeleton. Besides, the additive structure is very poor. The orbits of $\sigma$ partition $Z$, and each of them, with its weights, is itself a skeleton of degree $K$: a fixed point is a copy of $\cF_1$, and an orbit of ratio $t$ a copy of $\cO(t)$, both of degree $K$ (Theorem~\ref{thm:class}). Thus every skeleton of degree $K$ is the disjoint union of $f$ copies of $\cF_1$ and one copy of $\cO(t_O)$ for each orbit, and this decomposition is unique up to order, because $f$ and the multiset of ratios are invariant under isomorphism. Adding skeletons therefore simply means putting these pieces together: the classes of degree $K$ form the free commutative monoid generated by $\cF_1$ and the $\cO(t)$, $t\ge0$, with neutral element $\emptyset_K$. All the interesting arithmetic is multiplicative. Even so, it is convenient to have a genuine semiring, and the standard way to obtain one is to add classes of different degrees formally.

\begin{definition}[The graded semiring of classes]\label{def:graded}
For $K>0$, let $\mathfrak S_K$ be the set of isomorphism classes of skeletons of degree $K$, including the empty skeleton $\emptyset_K$, which is neutral for the disjoint union. Let
\[
\mathfrak S=\bigoplus_{K>0}\mathfrak S_K
\]
be the set of families $(a_K)_{K>0}$, with $a_K\in\mathfrak S_K$ and $a_K=\emptyset_K$ for all but finitely many degrees, with componentwise sum and with product
\[
(ab)_K=\bigsqcup_{K_1K_2=K}a_{K_1}\times b_{K_2},
\]
where the product with an empty skeleton is empty. Each non-empty class $a\in\mathfrak S_K$ is identified with the element of $\mathfrak S$ that has $a$ in degree $K$ and the empty skeleton in all other degrees, that is, with the family $(a_{K'})_{K'>0}$ given by $a_K=a$ and $a_{K'}=\emptyset_{K'}$ for $K'\ne K$; these are the nonzero \emph{homogeneous} classes of $\mathfrak S$. The zero of $\mathfrak S$ is the family $(\emptyset_K)_{K>0}$.
\end{definition}

\begin{remark}
In \cite[Remark~5.6]{PaperVI} the empty skeleton appeared as a formal object. Here it is one more skeleton, and the zero of $\mathfrak S$ needs no additional construction: it is the family of empty skeletons.
\end{remark}

With this construction, Theorem~\ref{thm:semiring} can be restated as an isomorphism of semirings: the degree becomes a grading, and the component of each degree is $\Sy^\sharp_0$.

\begin{proposition}\label{prop:graded}
$\mathfrak S$ is a commutative semiring with identity, graded by the multiplicative group $\mathbb R_{>0}$. Its zero is the family of empty skeletons, and its identity is the class of the one-point skeleton of degree $1$. Theorem~\ref{thm:semiring} gives an isomorphism of graded semirings
\[
\mathfrak S\cong\Sy^\sharp_0[\mathbb R_{>0}],
\]
the group semiring, with coefficients in $\Sy^\sharp_0$, of the group of degrees, whose elements are the finite sums $\sum_Ka_K\varepsilon_K$, with $a_K\in\Sy^\sharp_0$, where $\varepsilon_K$ denotes the degree $K$, with the rule $\varepsilon_K\varepsilon_{K'}=\varepsilon_{KK'}$.
\end{proposition}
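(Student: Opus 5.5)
The plan is to build the isomorphism degree by degree using Theorem~\ref{thm:semiring}, and then read off the semiring axioms for $\mathfrak S$ from those of the group semiring $\Sy^\sharp_0[\mathbb R_{>0}]$, rather than checking them directly on families of skeletons.

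First, for each $K>0$ let $\theta_K:\mathfrak S_K\to\Sy^\sharp_0$ send a class to its pair $(\Phi,f)$. By Theorem~\ref{thm:semiring}, restricted to degree $K$, this is a bijection. It takes $\emptyset_K$ to $(0,0)$ and disjoint union to the sum. So $\theta_K$ is an isomorphism of commutative monoids between $(\mathfrak S_K,\sqcup,\emptyset_K)$ and $(\Sy^\sharp_0,+,(0,0))$. Define
\[
\theta:\mathfrak S\to\Sy^\sharp_0[\mathbb R_{>0}],\qquad (a_K)_K\mapsto\sum_K\theta_K(a_K)\,\varepsilon_K .
\]
The sum on the right is finite because $a_K=\emptyset_K$ for almost all $K$, and $\theta_K(\emptyset_K)=0$. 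Since each $\theta_K$ is bijective, $\theta$ is a bijection, and it is additive because the sum on both sides is componentwise.

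Next I check multiplicativity on homogeneous elements. For $a\in\mathfrak S_{K_1}$ and $b\in\mathfrak S_{K_2}$, formula~\eqref{eq:prodclases} gives $\theta_{K_1K_2}(a\times b)=\theta_{K_1}(a)\,\theta_{K_2}(b)$. The convention that a product with an empty skeleton is empty matches the fact that $(0,0)$ is absorbing in $\Sy^\sharp_0$. For general elements I expand $(ab)_K=\bigsqcup_{K_1K_2=K}a_{K_1}\times b_{K_2}$. This union is finite (only finitely many nonempty terms), and $\theta_K$ turns it into $\sum_{K_1K_2=K}\theta_{K_1}(a_{K_1})\theta_{K_2}(b_{K_2})$. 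That is exactly the coefficient of $\varepsilon_K$ in the product of the group semiring. One point needs care here: the disjoint union of finitely many classes of the same degree is well defined on isomorphism classes and is associative and commutative up to isomorphism. This is stated in the paper, and $\theta_K$ being a monoid isomorphism makes it automatic.

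Finally, I transport the structure. $\Sy^\sharp_0$ is a commutative semiring with identity by Lemma~\ref{lem:closed}. The group semiring $R[G]$ of a commutative semiring over an abelian group is again a commutative semiring with identity $1\cdot\varepsilon_1$ and zero $0$, and the usual polynomial-style verification works without needing subtraction. Since $\theta$ is a bijection preserving both operations, $\mathfrak S$ inherits associativity, commutativity, distributivity and the neutral elements. Its zero is $\theta^{-1}(0)=(\emptyset_K)_K$, and its identity is $\theta^{-1}((1,1)\varepsilon_1)$, the one-point skeleton of degree~$1$ (Remark~\ref{rem:units}). The grading is $\mathfrak S=\bigoplus_K\mathfrak S_K$ with $\mathfrak S_{K_1}\mathfrak S_{K_2}\subseteq\mathfrak S_{K_1K_2}$, which follows from the homogeneous computation; $\theta$ sends $\mathfrak S_K$ to $\Sy^\sharp_0\,\varepsilon_K$, so it is graded.

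I do not expect a real obstacle. The only delicate point is the bookkeeping with empty skeletons and the finiteness of the unions, which is handled by treating $\theta_K(\emptyset_K)=0$ as absorbing.
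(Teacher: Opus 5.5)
Your proposal is correct and takes essentially the same route as the paper. Both arguments rest on Theorem~\ref{thm:semiring} to identify each $\mathfrak S_K$ with $\Sy^\sharp_0$ compatibly with disjoint union and product, and on the multiplicativity of the degree for the grading. The only difference is the order: the paper checks the semiring axioms on $\mathfrak S$ directly and then exhibits the isomorphism, while you build the isomorphism first and transport the axioms from $\Sy^\sharp_0[\mathbb R_{>0}]$. Your version is also more explicit about finite supports and the empty skeletons.
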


\begin{proof}
Each $\mathfrak S_K$ is a commutative monoid under disjoint union. The product is well defined on classes, and it is associative, commutative and distributive over the sum by Theorem~\ref{thm:semiring}, since only classes of the same degree are added. The isomorphism takes the class of a skeleton of degree $K$ to $(\Phi,f)\cdot\varepsilon_K$; it is additive and multiplicative by Proposition~\ref{prop:mult} and the multiplicativity of the degree.
\end{proof}

Throughout the paper, multiplicative arithmetic refers to homogeneous elements, that is, to classes of skeletons. We only add classes of the same degree, apart from the formal sums in $\mathfrak S$.

We end the subsection by placing $\mathfrak S$ with respect to the usual notions of semiring theory, which we will use when comparing with the literature.

\begin{remark}[Further properties]\label{rem:props}
In the terminology of \cite{Golan1999}, the semiring $\mathfrak S$ is \emph{zerosumfree}: a sum of nonzero elements is never zero, because adding adds sites. It is also \emph{entire}, that is, it has no zero divisors: if $a,b\ne0$ and $K_a$, $K_b$ are the largest degrees at which they do not vanish, then $(ab)_{K_aK_b}=a_{K_a}\times b_{K_b}$, which has $m(a_{K_a})\,m(b_{K_b})\ge1$ sites, and hence $ab\ne0$.

In a ring, having no zero divisors is equivalent to being able to cancel nonzero factors. In a semiring this is not so, and $\mathfrak S$ is an example: it has no zero divisors, but it is not multiplicatively cancellative (Theorem~\ref{thm:nocancel}). As a consequence, neither $\mathfrak S$ nor its degree-one part $\Sy^\sharp_0$ is a \emph{semidomain} in the sense of \cite{GottiPolo,ChapmanPolo}, that is, a subsemiring of an integral domain, since semidomains are cancellative. The component $\Phi$ alone takes values in the semidomain $\NR$; the obstruction comes exactly from the component $f$.
\end{remark}

\subsection{A family of examples: progression skeletons}

We close the section with a family of examples that will reappear several times and that connects with the origin of the theory.

\begin{example}[Progression skeletons]\label{ex:ruin}
Let $n\ge1$ and $\beta>0$. We call \emph{progression skeleton}, and denote by $\cP_n(\beta)$, the skeleton with sites $j=0,\dots,n-1$, involution $j\mapsto n-1-j$ and $r_j=e^{(j-(n-1)/2)\beta}$, so that the values $\log r_j$ form a symmetric arithmetic progression with step $\beta$. It has $f=1$ if $n$ is odd (the central site) and $f=0$ if $n$ is even, and
\[
\Phi\bigl(\cP_n(\beta)\bigr)=\sum_{j=0}^{n-1}x^{(j-\frac{n-1}2)\beta}=y^{-\frac{n-1}2}\,[n]_y,\qquad y=x^\beta,\quad [n]_y=1+y+\dots+y^{n-1},
\]
which is a symmetric $q$-integer. The degree is arbitrary and plays no role in the reduced monoid. For instance, $\cP_2(\beta)=\cO(\beta/2)$, $\cP_3(\beta)=\cT(\beta)$, and $\cP_1(\beta)$ is a unit.

These skeletons appear naturally in the problem from which the theory arose. Consider the biased random walk on $\{0,\dots,a\}$, absorbed at the endpoints, which jumps to the right with probability $p$ and to the left with probability $q=1-p$. Its $a-1$ interior states, with the reflection $z\mapsto a-z$ as involution and $\log r_z=(z-\tfrac a2)\log(p/q)$, form a progression skeleton: it is $\cP_{a-1}(\beta)$ with $\beta=|\log(p/q)|$ (Remark~\ref{rem:ruin}). We will see in Section~\ref{subsec:ruin} that the arithmetic of this family is very rigid.
\end{example}

\section{Cancellation, characters and Grothendieck completions}\label{sec:cancel}

In Section~\ref{sec:setting} we reduced the study of skeletons to that of the pairs $(P,f)$ in $\Sy^\sharp$, with the product $(P,f)(Q,g)=(PQ,fg)$. Before turning to factorization, which is the central subject of the paper, it is worth understanding this product well, and the first natural question is whether one can simplify: if $A\times C\cong B\times C$, does it follow that $A\cong B$? In the integers, or in a polynomial ring, the answer is yes, and almost every factorization technique relies on it. Here the answer is no, and the reason is simple: the component $f$ multiplies like a natural number, and multiplying by $0$ erases the information.

This section studies that failure of cancellation, and its consequences, in three steps: first we describe exactly when cancellation fails (Theorem~\ref{thm:nocancel}). Then we introduce a family of characters, real numbers that multiply under the product and add under the disjoint union, and that together with the degree and $f$ determine the skeleton. Finally we construct the Grothendieck group and ring: the multiplicative group, which allows us to divide skeletons formally, and the ring, which allows us to subtract them. In both, the failure of cancellation leaves a precise trace: the group forgets $f$, and the ring has zero divisors.

\subsection{The failure of cancellation}

The example from the introduction, $\cO(t)\times\cF_2\cong\cO(t)\times\cO(0)$, is the typical case. The following theorem says that it is also the only mechanism: cancellation fails only when the factor to be cancelled has no fixed points, and then the only thing lost is $f$.

\begin{theorem}[The monoid is not cancellative]\label{thm:nocancel}
For every $t\ge0$ we have
\[
\cO(t)\times\cF_2\ \cong\ \cO(t)\times\cO(0),\qquad\text{but}\qquad\cF_2\not\cong\cO(0).
\]
More generally, $(P,f)(Q,0)=(P,f')(Q,0)$ for all $(P,f),(P,f')\in\Sy^\sharp$: multiplying by a skeleton without fixed points forgets the number of fixed points. Conversely, if $(P,f)(R,h)=(P',f')(R,h)$, then $P=P'$, and $f=f'$ unless $h=0$.
\end{theorem}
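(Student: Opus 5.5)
The whole statement reduces to arithmetic in $\Sy^\sharp$, via the Identification Theorem~\ref{thm:semiring}. That theorem turns isomorphism of skeletons into equality of triples $(K,\Phi,f)$ and turns products into componentwise products. I will therefore prove each claim at the level of pairs and then translate back.

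\emph{The explicit example.} Here $\cO(t)=(x^t+x^{-t},0)$, $\cF_2=(2,2)$ and $\cO(0)=(2,0)$. Both products equal $(2x^t+2x^{-t},0)$, since the $f$-component is $0\cdot 2=0\cdot 0=0$. To make the isomorphism literal I will take the blocks with matching degrees, for instance all of degree $1$. The two products then have the same triple, and injectivity in Theorem~\ref{thm:semiring} gives $\cO(t)\times\cF_2\cong\cO(t)\times\cO(0)$. On the other side, $\cF_2\not\cong\cO(0)$ because the two have different $f$ (equivalently, $n_0=0$ versus $n_0=1$), so Theorem~\ref{thm:class} distinguishes them.

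\emph{The general forgetting statement.} For this part the computation is immediate: $(P,f)(Q,0)=(PQ,0)=(P,f')(Q,0)$. Lemma~\ref{lem:closed} guarantees that both sides lie in $\Sy^\sharp$.

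\emph{The converse.} Suppose $(P,f)(R,h)=(P',f')(R,h)$. Comparing first components gives $PR=P'R$ in $\NR\subset\ZR$, so the key step is to cancel $R\neq0$ in $\ZR$. I would show that $\ZR$ is an integral domain using a leading-term argument like the one in the proof of Lemma~\ref{lem:closed}. Suppose $D=P-P'\neq0$ and order exponents in $\mathbb R$. The top coefficient of $DR$ is then the product of the top coefficients of $D$ and $R$, which is nonzero since $\mathbb Z$ is a domain. Hence $DR\neq0$, so $DR=0$ forces $P=P'$. Alternatively, every element of $\ZR$ lives in $\mathbb Z[x^{t_1},\dots,x^{t_n}]$ for finitely many exponents, which is a Laurent polynomial ring over a finitely generated torsion-free group and hence a domain. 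Comparing second components gives $fh=f'h$ in $\mathbb N_0$, so $f=f'$ whenever $h\neq0$.

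I expect the only point needing care to be this cancellation in $\ZR$ with real exponents. The leading-term argument settles it, because $(\mathbb R,+)$ is a totally ordered group. A minor remaining point is the degree: in the unreduced monoid, equality of products also gives $KK''=K'K''$, hence $K=K'$, so the reduced statement upgrades to isomorphism of skeletons.
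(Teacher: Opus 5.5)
Your proposal is correct and follows essentially the same route as the paper: compute both products componentwise, observe that $h=0$ erases the $f$-component, and cancel $R\neq0$ in the first component by the leading-term argument showing that $\ZR$ is a domain because $(\mathbb R,+)$ is totally ordered. Your remarks on matching degrees and on recovering $K=K'$ are harmless additions that the paper leaves implicit by working in the reduced monoid.
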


\begin{proof}
Both sides of the isomorphism equal $(2x^t+2x^{-t},0)$, while $\cF_2=(2,2)\ne(2,0)=\cO(0)$. In general, $f\cdot0=f'\cdot0$, and the first component does not depend on $f$.

For the converse we have to cancel in each component. In the second one, $fh=f'h$ with $h\ne0$ gives $f=f'$. In the first one we use that $\ZR$ is an integral domain. This is because the group $(\mathbb R,+)$ is totally ordered: if $P,R\ne0$ and $s$, $t$ are their largest exponents, the term of largest exponent in $PR$ is $c_s(P)\,c_t(R)\,x^{s+t}\ne0$, so that $PR\ne0$. Thus $PR=P'R$ with $R\ne0$ gives $(P-P')R=0$, and therefore $P=P'$.
\end{proof}

The proof shows that the failure of cancellation lives only in the component $f$, while the component $\Phi$ behaves as in a polynomial ring. The following remark sums this up.

\begin{remark}
The skeletons with $f=0$ form an ideal for the product: if one factor has no fixed points, neither has the product. In this ideal, two fixed points and a balanced orbit ($t=0$) can no longer be told apart; outside it, no information is lost: if the common factor has fixed points, equality of the products forces equality of both components. Cancellation holds in $\Sy$ and fails only through the component $f$. In Section~\ref{sec:burnside} we give a dynamical reading of this fact (Remark~\ref{rem:nocancel-dyn}).
\end{remark}

A similar phenomenon has long been known in graph theory, and the comparison helps to place the result.

\begin{remark}[Comparison with the direct product of graphs]\label{rem:lovasz}
For the direct product of graphs, $A\times C\cong B\times C$ implies $A\cong B$ whenever the cancelled factor $C$ is not bipartite \cite{Lovasz1971} (see also \cite{ImrichKlavzarRall}). If $C$ is bipartite, cancellation can fail; under the hypotheses of \cite{Hammack2008}, the failures are characterized through involutions that interchange the two parts.

Here the obstruction is more elementary: by Theorem~\ref{thm:nocancel}, multiplying by a skeleton without fixed points forgets the number of fixed points. Such a skeleton, for instance an orbit $\cO(t)$, plays a role similar to that of a bipartite factor, with $\cO(t)$ in the place of $K_2$, and the isomorphism $\cO(t)\times\cF_2\cong\cO(t)\times\cO(0)$ is reminiscent of a bipartite double cover. The analogy is limited to this mechanism; the two settings and their proofs are different.
\end{remark}

\subsection{Characters}

To decide whether two skeletons are isomorphic, or whether one can be a factor of another, it is useful to have numerical invariants that respect the operations. The number of sites $m$ is one of them: it adds under disjoint union and multiplies under the product. But $m$ is very poor, because it does not see the ratios. The idea is to replace the variable $x$ in $\Phi$ by a positive real number: since $x^sx^t=x^{s+t}$, the result is still multiplicative, and now the ratios play a role. Letting the value of $x$ vary, we obtain a continuous family of invariants; taking $x=e^\lambda$, each site $z$ contributes $r_z^{\,\lambda}$. By a \emph{character} of a commutative semiring with identity we mean a semiring homomorphism to $(\mathbb R,+,\cdot)$, that is, an additive and multiplicative map sending zero to $0$ and the identity to $1$.

\begin{proposition}[A continuous family of characters]\label{prop:chi}
For $\lambda\in\mathbb R$, let
\[
\chi_\lambda(\cE)=\sum_{z\in Z}r_z^{\,\lambda}=f+\sum_O2\cosh(\lambda t_O).
\]
Then $\chi_\lambda$ is additive for the disjoint union and multiplicative for the product. Moreover, $K$, $f$ and the function $\lambda\mapsto\chi_\lambda(\cE)$ determine the class of $\cE$.
\end{proposition}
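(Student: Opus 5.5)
The approach is to see $\chi_\lambda$ as the evaluation of $\Phi$ at $x=e^\lambda$. For each $\lambda\in\mathbb R$, the map $\mathrm{ev}_\lambda:\ZR\to\mathbb R$, $x^t\mapsto e^{\lambda t}$, is a ring homomorphism, because $t\mapsto e^{\lambda t}$ is a homomorphism $(\mathbb R,+)\to(\mathbb R_{>0},\cdot)$. Since $r_z^{\,\lambda}=e^{\lambda\log r_z}$, we get $\chi_\lambda(\cE)=\mathrm{ev}_\lambda(\Phi(\cE))$. Grouping the sites by orbits as in the definition of $\Phi$ then gives the closed form $f+\sum_O(e^{\lambda t_O}+e^{-\lambda t_O})=f+\sum_O2\cosh(\lambda t_O)$. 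Additivity and multiplicativity follow at once from Proposition~\ref{prop:mult} composed with $\mathrm{ev}_\lambda$. The empty skeleton goes to $0$ and the one-point skeleton to $1$, so each $\chi_\lambda$ is a character in the sense just defined.

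For the completeness statement, I first reduce to the polynomial. By Theorem~\ref{thm:semiring}, the class is determined by $(K,\Phi,f)$. It is therefore enough to show that the function $\lambda\mapsto\mathrm{ev}_\lambda(\Phi)$ determines $\Phi\in\NR$, that is, that $\Phi\mapsto(\lambda\mapsto\mathrm{ev}_\lambda\Phi)$ is injective on $\ZR$. Take $D=\Phi-\Phi'=\sum_t d_tx^t$, a finite sum, and suppose $\sum_t d_te^{\lambda t}=0$ for all $\lambda$. If $D\ne0$, let $s$ be its largest exponent with $d_s\ne0$. Multiplying by $e^{-\lambda s}$ and letting $\lambda\to+\infty$, every other term tends to $0$, which forces $d_s=0$, a contradiction. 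Equivalently, one can invoke the linear independence of the exponentials $e^{\lambda t}$ for distinct real $t$.

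A second route uses the symmetric form directly. The function $g(\lambda)=\chi_\lambda-f=\sum_O2\cosh(\lambda t_O)$ determines the multiset $\{t_O\}$ by the same dominant-term argument: the growth rate of $g$ at $+\infty$ gives the largest ratio and its multiplicity. We then subtract those terms and induct on the number of orbits. Orbits of ratio $0$ contribute the constant $2$ each, and their number is read off from the constant that remains at the end. Together with $K$ and $f$, Theorem~\ref{thm:class} then gives the class.

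The main obstacle, and it is a mild one, is the injectivity of the evaluation family on $\ZR$ with real, not necessarily commensurable, exponents. Here finiteness of the support together with the total order on $\mathbb R$ suffices, exactly as in the proof of Theorem~\ref{thm:nocancel}. It is also worth noting explicitly why $f$ must be supplied separately: $\chi_\lambda$ depends only on $\Phi$, and it cannot distinguish $\cF_2$ from $\cO(0)$, since both give $2$.
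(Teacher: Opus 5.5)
Your proof is correct and follows the paper's route: $\chi_\lambda$ is $\Phi$ composed with the evaluation homomorphism $x^t\mapsto e^{\lambda t}$, and completeness reduces, via Theorem~\ref{thm:semiring}, to the linear independence of the exponentials $\lambda\mapsto e^{\lambda t}$. The only difference is that you prove this independence directly, by the dominant-term argument as $\lambda\to+\infty$, while the paper cites Dedekind's lemma on distinct characters of $(\mathbb R,+)$.
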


\begin{proof}
The expression $f+\sum_O2\cosh(\lambda t_O)$ is obtained, as for $\Phi$, by grouping the sites: a fixed point contributes $1$, and an orbit of ratio $t_O$ contributes $e^{\lambda t_O}+e^{-\lambda t_O}=2\cosh(\lambda t_O)$. By construction, $\chi_\lambda$ is $\Phi$ followed by the ring homomorphism $\ZR\to\mathbb R$, $x^t\mapsto e^{\lambda t}$, and this is why it inherits the additivity and multiplicativity of $\Phi$ (Proposition~\ref{prop:mult}). The functions $\lambda\mapsto e^{\lambda t}$, for distinct $t$, are linearly independent, by Dedekind's lemma, since they are distinct characters of the group $(\mathbb R,+)$ with values in the multiplicative group $\mathbb R^\times=\mathbb R\setminus\{0\}$; hence the function $\chi_\bullet$ determines the coefficients of $\Phi$, and Theorem~\ref{thm:semiring} applies.
\end{proof}

Extended additively to families, $\chi_\lambda\bigl((a_K)_K\bigr)=\sum_K\chi_\lambda(a_K)$, each $\chi_\lambda$ is a character of the graded semiring $\mathfrak S$ of Definition~\ref{def:graded}: it vanishes on the empty skeletons, takes the value $1$ on the identity, and is multiplicative because the product of two families is a disjoint union of products of homogeneous classes. In particular, it is a character of $\Sy^\sharp_0$.

In particular, the family $\chi_\lambda$ contains the invariants we already knew, and it has a natural interpretation.

\begin{remark}
We have $\chi_0=m$. Since $\cosh\ge1$, also $\chi_\lambda\ge m$, with equality for some $\lambda\ne0$ if and only if all the ratios vanish. The function $\log\chi_\lambda$ is additive under the product. The multiplicative invariants are thus $K$, $f$, $m$ and the family $\chi_\lambda$. The number of orbits $k$ is \emph{not} multiplicative: $k_{12}=(m_1m_2-f_1f_2)/2$ (Proposition~\ref{prop:orbits}).
\end{remark}

Since $\log\chi_\lambda$ is additive, so are its derivatives at $\lambda=0$. The first one vanishes by symmetry; the second gives an invariant with a very clear meaning, the mean of the squares of the $\log r_z$, which measures how far the weights are from balance.

We call \emph{spread} of $\cE$ the number
\[
s^2(\cE)=\frac1m\sum_{z\in Z}(\log r_z)^2=\frac2m\sum_Ot_O^2 .
\]

\begin{proposition}[Spread]\label{prop:spread}
We have $s^2(\cE)=\dfrac{d^2}{d\lambda^2}\log\chi_\lambda\Big|_{\lambda=0}$, and the spread is additive: $s^2(\cE_1\times\cE_2)=s^2(\cE_1)+s^2(\cE_2)$.
\end{proposition}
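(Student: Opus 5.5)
The plan is to compute the derivatives of $\log\chi_\lambda$ at $\lambda=0$ directly from the definition $\chi_\lambda(\cE)=\sum_{z\in Z}r_z^{\,\lambda}=\sum_z e^{\lambda\ell_z}$, where $\ell_z=\log r_z$. Additivity of the spread will then follow from the multiplicativity of $\chi_\lambda$ (Proposition~\ref{prop:chi}). Since $Z$ is finite, $\chi_\lambda$ is a finite sum of exponentials, hence smooth in $\lambda$, and we have
\[
\chi_0=m,\qquad \chi_\lambda'\big|_{0}=\sum_z\ell_z,\qquad \chi_\lambda''\big|_{0}=\sum_z\ell_z^2 .
\]
The first derivative vanishes by symmetry: the sites of an orbit have opposite values of $\ell_z$, and fixed points have $\ell_z=0$. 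Alternatively, $\chi_\lambda=f+\sum_O2\cosh(\lambda t_O)$ is an even function of $\lambda$.

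Next we compute the second derivative of the logarithm:
\[
(\log\chi_\lambda)''=\frac{\chi_\lambda''}{\chi_\lambda}-\Bigl(\frac{\chi_\lambda'}{\chi_\lambda}\Bigr)^2 .
\]
At $\lambda=0$ this equals $\frac1m\sum_z\ell_z^2-0=s^2(\cE)$. We also check the second expression in the definition of the spread: each orbit contributes $t_O^2+t_O^2=2t_O^2$, and fixed points contribute nothing, which gives $\frac2m\sum_Ot_O^2$. This step needs $m\ge1$, so that $\chi_\lambda>0$ and the logarithm is defined. This holds by the convention that skeletons are non-empty (Remark~\ref{rem:units}).

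For additivity, Proposition~\ref{prop:chi} gives $\chi_\lambda(\cE_1\times\cE_2)=\chi_\lambda(\cE_1)\chi_\lambda(\cE_2)$ for all $\lambda$. Taking logarithms, which is legitimate since all three are positive, we get $\log\chi_\lambda(\cE_1\times\cE_2)=\log\chi_\lambda(\cE_1)+\log\chi_\lambda(\cE_2)$ as an identity of smooth functions of $\lambda$. Differentiating twice at $\lambda=0$ and applying the formula just proved to each of the three skeletons gives $s^2(\cE_1\times\cE_2)=s^2(\cE_1)+s^2(\cE_2)$.

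There is no real obstacle. The one point needing care is the vanishing of the first derivative: without it, the second derivative of $\log\chi$ would be a variance rather than a raw second moment. The symmetry $\ell_{\sigma z}=-\ell_z$ is exactly what identifies the two.
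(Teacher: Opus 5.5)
Your proof is correct and follows essentially the same route as the paper's proof: compute $\chi_0=m$, $\chi'_0=\sum_z\log r_z=0$ by symmetry and $\chi''_0=\sum_z(\log r_z)^2$, so $(\log\chi_\lambda)''$ at $0$ equals $s^2$. Additivity then follows from the multiplicativity of $\chi_\lambda$. Your added remark that non-emptiness makes $\chi_\lambda>0$, so that the logarithm is defined, is a small point of care that the paper leaves implicit.
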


\begin{proof}
We have $\chi_0=m$, $\chi'_0=\sum_z\log r_z$ and $\chi''_0=\sum_z(\log r_z)^2$. By symmetry, $\chi'_0=0$, so the second derivative of $\log\chi_\lambda$ at $0$, which is $\chi''_0/\chi_0-(\chi'_0/\chi_0)^2$, equals $\chi''_0/\chi_0=s^2$. Since $\log\chi_\lambda$ is additive under the product, so is its second derivative.
\end{proof}

The spread is easy to compute, and it gives a first quick test in factorization problems.

\begin{example}
For the progression skeleton of Example~\ref{ex:ruin}, $s^2(\cP_n(\beta))=\beta^2(n^2-1)/12$. The spread gives a quick necessary condition for factorization: if $\cE=A\times B$, then $s^2(A)+s^2(B)=s^2(\cE)$ and $m_Am_B=m$. For instance, it is consistent with $\cP_{rs}(\beta)\cong\cP_r(\beta)\times\cP_s(r\beta)$ (Theorem~\ref{thm:ruin}):
\[
\frac{\beta^2(r^2-1)}{12}+\frac{r^2\beta^2(s^2-1)}{12}=\frac{\beta^2(r^2s^2-1)}{12}.
\]
\end{example}

\subsection{The multiplicative group}

In the positive integers, passing to the positive rationals allows us to divide. The general construction that does the same for any commutative monoid is its \emph{Grothendieck group} $K_0$: its elements are formal quotients $a/b$, and two quotients $a/b$ and $c/d$ are identified when $a\,d\,e=c\,b\,e$ for some $e$ in the monoid. (We keep the name $K_0$ for this multiplicative group; the additive completion appears below as the Grothendieck ring.) If the monoid is cancellative, the factor $e$ is superfluous and $K_0$ is simply its group of fractions, like $\mathbb Q_{>0}$ for $\mathbb N$. If it is not, the map from the monoid to $K_0$ identifies distinct elements, and Theorem~\ref{thm:nocancel} tells us exactly which ones.

\begin{theorem}[$K_0$ and what it forgets]\label{thm:K0}
For the multiplicative monoid,
\[
K_0\ \cong\ \mathbb R_{>0}\times\Gr(\Sy),
\]
where $\Gr(\Sy)=\{P/Q:\ P,Q\in\Sy\}$ inside the field of fractions of $\ZR$. Two skeletons have the same class in $K_0$ if and only if they have the same degree and the same $\Phi$. The canonical map to $K_0$ is therefore not injective, and what it loses is exactly the number of fixed points.
\end{theorem}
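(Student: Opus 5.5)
The plan is to build an explicit monoid homomorphism from the multiplicative monoid of non-empty classes to $\mathbb R_{>0}\times\Gr(\Sy)$ and check that it has the universal property of the Grothendieck group. Let $\pi:[\cE]\mapsto(K,\Phi(\cE))$. By Proposition~\ref{prop:mult} and the multiplicativity of the degree, $\pi$ is a monoid homomorphism into $\mathbb R_{>0}\times\ZR\setminus\{0\}$. Theorem~\ref{thm:nocancel} shows that $\ZR$ is an integral domain, so $\Phi(\cE)$ lands in the multiplicative group of its field of fractions. Hence $\pi$ extends to a group homomorphism $\bar\pi:K_0\to\mathbb R_{>0}\times\operatorname{Frac}(\ZR)^\times$, given by $[a]/[b]\mapsto\pi(a)\pi(b)^{-1}$. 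This is well defined because $ade=cbe$ implies $\pi(a)\pi(d)=\pi(c)\pi(b)$, after cancelling $\pi(e)$ in a group.

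For surjectivity onto $\mathbb R_{>0}\times\Gr(\Sy)$, take $(K,P/Q)$ with $P,Q\in\Sy$. I realise $P$ and $Q$ by skeletons. Every $P\in\Sy$ has a class $(P,f)$ with $f\equiv c_0(P)$ mod $2$ and $0\le f\le c_0(P)$, for instance $f=c_0(P)$. Then Theorem~\ref{thm:semiring} supplies skeletons $A$ of degree $K$ with $\Phi=P$ and $B$ of degree $1$ with $\Phi=Q$, and $[A]/[B]\mapsto(K,P/Q)$. The image of $\bar\pi$ is contained in $\mathbb R_{>0}\times\Gr(\Sy)$, since $\Sy$ is closed under products. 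I also note that $\Gr(\Sy)$ is a group: $(P/Q)(P'/Q')=PP'/QQ'$ and $(P/Q)^{-1}=Q/P$.

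For injectivity, the key step is: if $\pi(a)=\pi(b)$, then $[a]=[b]$ in $K_0$. Here I use Theorem~\ref{thm:nocancel}. Take $e=\cO(0)$, of degree $1$ and pair $(2,0)$. If $a=(K,P,f)$ and $b=(K,P,f')$, then $ae=(K,2P,0)=be$, so $a\cdot1\cdot e=b\cdot1\cdot e$ and $[a]=[b]$. Now suppose $\bar\pi([a]/[b])=\bar\pi([c]/[d])$. Then $\pi(ad)=\pi(cb)$, since both sides are honest elements of the monoid with equal image. The key step gives $[ad]=[cb]$, and therefore $[a]/[b]=[c]/[d]$. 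So $\bar\pi$ is an isomorphism.

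The final assertions follow directly. Two skeletons have the same class in $K_0$ if and only if they have the same $(K,\Phi)$, by injectivity of $\bar\pi$ combined with the key step. In the other direction, $\cF_2$ and $\cO(0)$ (same degree) have the same image but different classes by Theorem~\ref{thm:semiring}, so the canonical map is not injective. Since $(K,\Phi)$ together with $f$ determines the class, what is lost is exactly $f$. The only real obstacle is the key step, which depends on finding a single absorbing element $e$ with $f=0$; $\cO(0)$ serves this role for every pair of classes at once.
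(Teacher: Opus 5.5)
Your proof is correct and uses the same two facts as the paper's. Because $\ZR$ is a domain, equal classes in $K_0$ have equal $(K,\Phi)$; and multiplying by $\cO(0)=(2,0)$ erases $f$, so equal $(K,\Phi)$ gives equal classes. The difference is only in packaging: you check directly that the explicit map $[a]/[b]\mapsto(K_a/K_b,\Phi(a)/\Phi(b))$ is a well-defined, injective homomorphism onto $\mathbb R_{>0}\times\Gr(\Sy)$, handling the degree along the way, whereas the paper first identifies $K_0(\Sy^\sharp)$ with the group of fractions of the cancellative monoid $\Sy$ and then uses that Grothendieck groups commute with finite products.
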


\begin{proof}
Two elements $a,b$ have the same class in $K_0$ if and only if $ae=be$ for some $e$. If $(P,f)(R,h)=(P',f')(R,h)$, the first component gives $PR=P'R$ with $R\ne0$, and $P=P'$ because $\ZR$ is a domain. Conversely, by Theorem~\ref{thm:nocancel}, applied with the factor $\cO(0)=(2,0)$, we have $(P,f)(2,0)=(P,f')(2,0)$, and in a group the common factor cancels: $[(P,f)]=[(P,f')]$ in $K_0$. So $K_0$ only sees the component $P$. The projection $(P,f)\mapsto P$ onto $\Sy$ is surjective (it suffices to take $f\in\{0,1\}$ with $f\equiv c_0(P)$ modulo $2$). Hence $K_0(\Sy^\sharp)=K_0(\Sy)$. Since $\Sy$ is cancellative (Theorem~\ref{thm:nocancel}), $K_0(\Sy)$ is its group of fractions, which lies inside the field of fractions of the domain $\ZR$. Finally, by Theorem~\ref{thm:semiring} the multiplicative monoid is the product $\mathbb R_{>0}\times\Sy^\sharp$ (degree and pair $(\Phi,f)$), and the Grothendieck group of a product is the product of the Grothendieck groups. The factor $\mathbb R_{>0}$ is already a group under the product (the inverse of $K$ is $1/K$), so it is its own Grothendieck group, and $K_0\cong\mathbb R_{>0}\times\Gr(\Sy)$.
\end{proof}

The theorem reduces the description of $K_0$ to that of the group $\Gr(\Sy)$: which quotients $P/Q$ of symmetric elements of $\ZR$ are quotients of skeletons? There is an obvious necessary condition. A skeleton takes positive values at every character $x\mapsto e^u$ (it is a sum of exponentials with nonnegative coefficients), and therefore so does a quotient of skeletons. When the exponents are commensurable, this condition is also sufficient, and the reason is a classical theorem of Pólya: if a polynomial $p$ is positive on $[0,\infty)$ and has positive leading coefficient, then $(1+y)^Mp(y)$ has all its coefficients positive for some $M$. In other words, after multiplication by a power of a simple skeleton, every positive element becomes a skeleton.

\begin{theorem}[Commensurable ratios]\label{thm:polya}
Let $\alpha>0$ and let $R=\sum_jc_jx^{j\alpha}$ be a symmetric element of $\ZR$ with exponents in $\alpha\mathbb Z$. Then $R\in\Gr(\Sy)$ if and only if $R(e^{u})>0$ for every real $u$. In that case one can take $R=P/Q$ with $Q=(x^{\alpha/2}+x^{-\alpha/2})^{2N}$ for some $N$.
\end{theorem}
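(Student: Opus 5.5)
Necessity is immediate. If $R=P/Q$ with $P,Q\in\Sy$, then evaluating at the character $x\mapsto e^u$ gives $R(e^u)=P(e^u)/Q(e^u)$. Both numerator and denominator are finite sums of exponentials with nonnegative coefficients, not all zero, so both are strictly positive. (The ring homomorphism $\ZR\to\mathbb R$, $x^t\mapsto e^{ut}$, extends to the relevant quotients because $Q(e^u)\neq0$.) Hence $R(e^u)>0$ for every real $u$.

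For sufficiency, I would reduce to Pólya's theorem by clearing negative exponents. Set $y=x^{\alpha}$, so $R=\sum_{j=-d}^{d}c_jy^j$ with $c_{-j}=c_j$ by symmetry, and we may take $c_d\neq0$. Define the ordinary polynomial $p(y)=y^dR\in\mathbb Z[y]$ of degree $2d$. Its leading coefficient is $c_d$ and its constant term is $c_{-d}=c_d$.

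Positivity of $R(e^u)$ for all $u$ says exactly that $p(y)>0$ for $y\in(0,\infty)$. The sign of $c_d$ follows from letting $u\to+\infty$: then $R(e^u)\sim c_de^{ud\alpha}$, so $c_d>0$. At $y=0$ we have $p(0)=c_d>0$. So $p$ is positive on the closed half-line $[0,\infty)$ with positive leading coefficient, and Pólya's theorem gives $M$ such that $(1+y)^Mp(y)$ has all coefficients positive. Here I would invoke the strict form, with positive coefficients for all powers $y^0,\dots,y^{2d+M}$; this is the version requiring positivity at $0$, which we have.

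Next I would take $M=2N$ even, enlarging $M$ if necessary; once Pólya's conclusion holds for $M$ it holds for $M+1$, since multiplying by $1+y$ preserves positivity of all coefficients. Then
\[
(x^{\alpha/2}+x^{-\alpha/2})^{2N}R=y^{-N}(1+y)^{2N}\,y^{-d}p(y)=y^{-(N+d)}\,(1+y)^{2N}p(y).
\]
This is a Laurent polynomial in $y$ with nonnegative (indeed positive) integer coefficients. It is symmetric, being a product of symmetric elements, because the reflection $x^t\mapsto x^{-t}$ is a ring homomorphism. It is also nonzero. So $P:=QR\in\Sy$ with $Q=(x^{\alpha/2}+x^{-\alpha/2})^{2N}\in\Sy$; note that $Q$ is $\Phi(\cO(\alpha/2))^{2N}$. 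Therefore $R=P/Q\in\Gr(\Sy)$, with $Q$ of the stated form.

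The only delicate step is checking the hypotheses of Pólya's theorem, in particular positivity at the endpoint $y=0$ and of the leading coefficient. Both come for free from symmetry ($c_{-d}=c_d$) and from the asymptotics $u\to\pm\infty$. The degenerate case $d=0$ is trivial: then $R=c_0>0$ and we can take $N=0$. The parity adjustment of $M$ is routine.
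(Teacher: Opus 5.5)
Your proof is correct and follows the paper's argument step by step. The steps match: the reduction to $p(y)=y^dR$, positivity of the equal extreme coefficients $c_{-d}=c_d$, Pólya's theorem with an even exponent $M=2N$, and the identity $(x^{\alpha/2}+x^{-\alpha/2})^{2N}R=y^{-(N+d)}(1+y)^{2N}p(y)$. The only differences are cosmetic: you get the sign of $c_d$ from $u\to+\infty$ where the paper uses $y\to0$, and you spell out the case $d=0$ and the parity adjustment of $M$ explicitly.
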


\begin{proof}
\emph{Necessity.} The elements of $\Sy$ are positive at every $x=e^u$, and hence so are their quotients.

\emph{Sufficiency.} Let $y=x^\alpha$ and let $d$ be the largest exponent of $R$ in $y$. Then $p(y)=y^dR$ is a polynomial with $p(y)>0$ for $y>0$. Its constant term is the lowest coefficient of $R$, which is positive because $R>0$ near $y=0$; by symmetry it coincides with the leading coefficient of $p$. Thus $p>0$ on $[0,\infty)$ and also at infinity, which is the hypothesis of Pólya's theorem: the homogenization of $p$ is positive on the closed simplex. By Pólya's theorem in one variable \cite{Meissner1911,Polya1928,PowersReznick2001}, $(1+y)^Mp(y)$ has positive coefficients for some $M$, which we may take even, $M=2N$. Then
\[
(y^{1/2}+y^{-1/2})^{2N}R=y^{-N-d}(1+y)^{2N}p(y)
\]
has nonnegative coefficients and is symmetric, so it lies in $\Sy$, as does the multiplier $Q=(y^{1/2}+y^{-1/2})^{2N}$.
\end{proof}

The following examples show that $\Gr(\Sy)$ contains elements with negative coefficients, that is, quotients of skeletons that are not skeletons, and that the size of the required multiplier can be large.

\begin{example}
We have
\[
x-1+x^{-1}=\frac{x^2+1+x^{-2}}{x+1+x^{-1}}\in\Gr(\Sy).
\]
In the notation of Example~\ref{ex:ruin}, this is $\Phi(\cP_3(2))/\Phi(\cP_3(1))$: a quotient of two progression skeletons which is not a skeleton, because it has a negative coefficient. In general, by Theorem~\ref{thm:polya}, a symmetric element of $\ZR$ with commensurable exponents is a quotient of skeletons exactly when its values at all the characters $x^t\mapsto e^{\lambda t}$ are positive.

On the other hand, $x-3+x^{-1}\notin\Gr(\Sy)$, since it takes the value $-1$ at $x=1$. For $x-c+x^{-1}$ with $0<c<2$, the least $N$ in Theorem~\ref{thm:polya} (with $\alpha=1$) grows as the minimum $2-c$ tends to $0$: it equals $1,3,19,199$ for $c=1,\,1.5,\,1.9,\,1.99$.
\end{example}

When the ratios are not commensurable, the exponents generate a group of rank $d\ge2$, the group ring is a ring of Laurent polynomials in $d$ variables, and a character $x^t\mapsto e^{\phi(t)}$, with $\phi$ additive, is simply a point of the positive orthant $(0,\infty)^d$. For a symmetric Laurent polynomial $R$ to lie in $\Gr(\Sy)$ there are two necessary conditions. The first is the one above: $R$ must be strictly positive on the positive orthant. The second uses initial forms. Given a weight vector $w$, that is, a linear form on the exponents, we call \emph{initial form} of $R$, and denote by $\mathrm{in}_w(R)$, the sum of the terms $c_tx^t$ of $R$ for which $w(t)$ is maximal. Initial forms are multiplicative, so $R=P/Q$ gives $\mathrm{in}_w(P)=\mathrm{in}_w(Q)\,\mathrm{in}_w(R)$; since $\mathrm{in}_w(P)$ and $\mathrm{in}_w(Q)$ are nonzero with nonnegative coefficients, $\mathrm{in}_w(R)$ must be strictly positive on the positive orthant. The second condition does not follow from the first, so positivity alone is not sufficient. For example, with $\alpha,\beta$ incommensurable, the symmetric element $R=(x^\alpha-x^\beta)^2+(x^{-\alpha}-x^{-\beta})^2+1$ takes values at least $1$ at every character, but its initial form for a weight with $w(\alpha)=w(\beta)=1$ is $(x^\alpha-x^\beta)^2$, which vanishes where $\phi(\alpha)=\phi(\beta)$; hence $R\notin\Gr(\Sy)$.

\begin{problem}[Incommensurable ratios]\label{prob:gr}
Describe the group $\Gr(\Sy)$ when the ratios are not commensurable. In particular, if $R$ is a symmetric Laurent polynomial such that $R$ and all its initial forms $\mathrm{in}_w(R)$ are strictly positive on the positive orthant, does $R$ lie in $\Gr(\Sy)$?
\end{problem}

This is a multivariate Pólya-type question \cite{Handelman1988,PowersReznick2001}. As motivation we mention the toric extensions of Pólya's theorem by Baldi, Sinn, Telek and Weigert \cite{BSTW}, which give certificates of the form $(\sum_{a\in A}u^a)^Ng$ for sparse polynomials $g$ under a condition on the Newton polytope; by themselves they do not settle the question.

\subsection{The Grothendieck ring}

The group $K_0$ allows us to divide skeletons. The analogous construction for the sum allows us to subtract them: formal differences $a-b$ are adjoined, just as one passes from $\mathbb N$ to $\mathbb Z$. Applied to the disjoint union of skeletons of the same degree, it gives a ring, the \emph{Grothendieck ring}. Since the additive structure is free (Section~\ref{subsec:graded}), the ring is easy to describe; what is interesting is that the failure of multiplicative cancellation reappears in it in a very concrete form: as zero divisors.

\begin{theorem}[The Grothendieck ring and its zero divisors]\label{thm:ring}
Let $\ZR^{\mathrm{sym}}$ be the ring of symmetric elements of $\ZR$. The additive completion of $\Sy^\sharp$ (that is, of each component of fixed degree) is
\[
\mathcal R=\bigl\{(P,f)\in\ZR^{\mathrm{sym}}\times\mathbb Z:\ f\equiv P(1)\pmod 2\bigr\},
\]
which is a ring with the product $(P,f)(Q,g)=(PQ,fg)$: it is the Grothendieck ring of the semiring $\Sy^\sharp_0$ of classes of degree $1$, the only component of $\mathfrak S$ closed under the product. Since the product takes the degrees $K,K'$ to $KK'$, the Grothendieck ring of the graded semiring $\mathfrak S$ of Definition~\ref{def:graded} is
\[
\bigoplus_{K>0}\mathcal R\ \cong\ \mathcal R\otimes_{\mathbb Z}\mathbb Z[\mathbb R_{>0}],
\]
where $\mathbb Z[\mathbb R_{>0}]$ is the group ring of the multiplicative group of degrees.

Moreover:
\begin{enumerate}[nosep]
\item $\mathcal R$ is the fibre product of $\ZR^{\mathrm{sym}}$ and $\mathbb Z$ over $\mathbb F_2$, via $P\mapsto P(1)\bmod2$ and $f\mapsto f\bmod2$;
\item the canonical map $\Sy^\sharp\to\mathcal R$ is injective;
\item a nonzero element of $\mathcal R$ is a zero divisor if and only if one of its coordinates vanishes.
\end{enumerate}
In particular, for $n\ne0$ the elements $(0,2n)$ are zero divisors: $(0,2n)\cdot(Q,0)=0$ for every $(Q,0)\in\mathcal R$ (that is, with $Q(1)$ even). The failure of cancellation in Theorem~\ref{thm:nocancel} is exactly the existence of these zero divisors: $\cF_2-\cO(0)=(0,2)$.
\end{theorem}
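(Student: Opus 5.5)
The plan is to use that the additive monoid of $\Sy^\sharp_0$ is free (Section~\ref{subsec:graded}) on the blocks $\cF_1=(1,1)$ and $\cO(t)=(x^t+x^{-t},0)$, $t\ge0$. Its additive Grothendieck group is therefore the free abelian group on these symbols. We will map it to $\ZR^{\mathrm{sym}}\times\mathbb Z$ by $(P,f)\mapsto(P,f)$ and show that this map is an isomorphism onto $\mathcal R$.

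\emph{Image.} Every generator satisfies $f\equiv P(1)\pmod 2$: $\cF_1$ gives $1\equiv1$ and $\cO(t)$ gives $0\equiv2$. The image is thus contained in $\mathcal R$. For surjectivity, a basis of $\ZR^{\mathrm{sym}}$ is $\{1\}\cup\{x^t+x^{-t}:t>0\}$. Given $(P,f)\in\mathcal R$, write $P=c_0+\sum_{t>0}c_t(x^t+x^{-t})$. Then $P(1)\equiv c_0\pmod 2$, so $c_0-f$ is even, and
\[
(P,f)=f\,\cF_1+\tfrac{c_0-f}{2}\,\cO(0)+\sum_{t>0}c_t\,\cO(t).
\]
\emph{Injectivity.} Suppose an integer combination $a\cF_1+\sum_t b_t\cO(t)$ maps to $0$. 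The $x^t$-coefficients with $t>0$ give $b_t=0$, the $f$-coordinate gives $a=0$, and the constant term then gives $b_0=0$.

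The product structure is inherited from Lemma~\ref{lem:closed} by bilinear extension, so $\mathcal R$ is the ring claimed. Item~2 follows at once, since $\Sy^\sharp$ sits inside $\ZR^{\mathrm{sym}}\times\mathbb Z$ via the same map. Item~1 is the definition of the fibre product, given that $P\mapsto P(1)\bmod 2$ is a ring homomorphism. For the graded statement, the product sends degrees $(K,K')$ to $KK'$. Completing each summand $\mathfrak S_K$ therefore gives $\bigoplus_K\mathcal R\,\varepsilon_K$, and this is $\mathcal R\otimes\mathbb Z[\mathbb R_{>0}]$ by Proposition~\ref{prop:graded}. Among the components, only degree $1$ is closed under the product.

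\emph{Zero divisors (item 3).} This is the step needing care, and the tool is that $\ZR$ is a domain (proof of Theorem~\ref{thm:nocancel}).

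If $(P,f)(Q,g)=0$ with both factors nonzero, then $PQ=0$ and $fg=0$. Suppose $P\ne0$ and $f\ne0$. Then $Q=0$ and $g=0$, which is a contradiction. So one coordinate of $(P,f)$ must vanish.

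Conversely, take $(0,f)$ with $f\ne0$. Then $f$ is even, and $(0,f)(Q,0)=0$ for any nonzero $Q$ with $Q(1)$ even, for example $Q=2$. Now take $(P,0)$ with $P\ne0$. We need a partner $(0,g)$ with $g\ne0$ even; then $(P,0)(0,2)=(0,0)$, and $(0,2)\in\mathcal R$.

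The final sentence then follows: $\cF_2-\cO(0)=(2,2)-(2,0)=(0,2)$.
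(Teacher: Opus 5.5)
Your proof is correct. For items 1--3 and the zero divisors it coincides with the paper's argument: the domain property of $\ZR$ in one direction, and the partners $(2,0)$ and $(0,2)$ in the other.

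Where you differ is the identification of the additive completion.

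\emph{Your route.} You use the freeness of the additive monoid on $\cF_1$ and the $\cO(t)$, so the completion is the free abelian group on these blocks. Surjectivity is then the one-line decomposition $(P,f)=f\,\cF_1+\tfrac{c_0-f}{2}\,\cO(0)+\sum_{t>0}c_t\,\cO(t)$, and injectivity is a comparison of coefficients. This makes transparent that the parity condition is exactly the integrality of the coefficient of $\cO(0)$. It also shows that the inequality $f\le c_0$ disappears because that coefficient may be negative.

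\emph{The paper's route.} It never uses a basis. It splits $P=P^+-P^-$ and pads both parts with $2M$ to write $(P,f)=X-Y$ with explicit $X,Y\in\Sy^\sharp$, each nonzero, with $f\le c_0$ and the right parity. Injectivity of $\Sy^\sharp\to\mathcal R$ then comes from additive cancellation.

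\emph{What each buys.} The paper's version works directly with the semigroup $\Sy^\sharp$ of nonzero classes named in the statement, and exhibits every element of $\mathcal R$ as a difference of two genuine skeletons. Yours works with $\Sy^\sharp_0$, which is harmless since adjoining a zero does not change the group completion. In exchange, it gives the structure of the additive group (free abelian on the blocks) at no extra cost.
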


Before the proof, it is worth reading the statement. The condition $f\equiv P(1)$ modulo $2$ is the trace, in the ring, of the parity condition of $\Sy^\sharp$: for a skeleton, $P(1)=m$ and $m\equiv f$ modulo $2$. The inequality $f\le c_0(P)$, on the other hand, disappears, because formal differences do not respect it. Point~(1) says that $\mathcal R$ consists of the pairs that agree modulo $2$, and point~(3) that the only way for a product to vanish is that each factor vanishes in a different component. The element $(0,2)=\cF_2-\cO(0)$ is the difference of the two skeletons of Theorem~\ref{thm:nocancel}, which are different but give the same result when multiplied by $\cO(t)$. Hence $(0,2)\cdot\cO(t)=\cF_2\,\cO(t)-\cO(0)\,\cO(t)=0$, and $(0,2)$ is a zero divisor.

\begin{proof}
Every difference of elements of $\Sy^\sharp$ satisfies $f\equiv P(1)$ modulo $2$, because the non-constant coefficients of a symmetric element come in equal pairs. Conversely, every $(P,f)\in\mathcal R$ is a difference of elements of $\Sy^\sharp$. Write $P=P^+-P^-$, with $P^\pm\in\NR$ symmetric (the positive and negative parts of $P$), and let $a_0=c_0(P^+)$ and $b_0=c_0(P^-)$; then $f\equiv a_0-b_0$ modulo $2$. With $N=|f|$ and $M=2N+b_0+1$, the pairs
\[
X=(P^++2M,\ f+b_0+2N),\qquad Y=(P^-+2M,\ b_0+2N)
\]
lie in $\Sy^\sharp$ (they are symmetric and nonzero, the second component lies between $0$ and the constant term, and parity holds), and $(P,f)=X-Y$. The parity condition is preserved by the product, because $P(1)Q(1)\equiv fg$. Additive cancellation holds in $\Sy^\sharp$, so the map to $\mathcal R$ is injective. The degree only labels the components, and the product multiplies the labels; this gives the graded form.

Point~(1) is a restatement of the definition of $\mathcal R$. For (3): if $P\ne0\ne f$, then $(P,f)(Q,g)=0$ forces $PQ=0$ and $fg=0$, that is, $Q=0$ and $g=0$, because $\ZR$ is a domain. Conversely, $(0,f)\cdot(2,0)=0$ and $(P,0)\cdot(0,2)=0$, and the factors $(2,0)$ and $(0,2)$ lie in $\mathcal R$. The statement about $(0,2n)$ is a direct computation.
\end{proof}

This completes the algebraic description of the semiring: we know what can be cancelled, which numerical invariants determine it and what its completions look like. In Section~\ref{sec:fact} we turn to the central problem, factorization, where the failure of cancellation reappears as the mechanism behind non-unique factorization.

\section{Factorization}\label{sec:fact}

In this section we study factorization, the central subject of the paper. In the previous sections we have seen that the classes of skeletons form a semiring, that each class is described by a pair $(\Phi,f)$, and that the product of skeletons is the product of these pairs. The natural question now is the same one asked in the integers: which skeletons cannot be decomposed, and in how many ways does an arbitrary skeleton decompose into indecomposable pieces? In the integers the answer is the fundamental theorem of arithmetic: every number is a product of primes, in exactly one way. We will see that here the first half survives, but the second fails in several ways, and that the reasons for this failure can be described precisely.

It is convenient to fix the framework first. We work with the multiplicative monoid, and its units are the one-point skeletons (Remark~\ref{rem:units}), so factorizations are considered up to units, that is, in the reduced monoid $\Sy^\sharp$. In particular, the degree plays no role, which is why we omit it from the blocks. A non-unit is \emph{irreducible} if it is not a product of two non-units. Two factorizations are identified if one is obtained from the other by multiplying the factors by units and reordering them. Following \cite{GHK}, the monoid is \emph{atomic} if every non-unit is a product of irreducibles (the irreducibles are also called \emph{atoms}).

The section proceeds from the general to the particular. First we see that every skeleton factors into irreducibles and that the lengths of its factorizations are bounded, because the number of sites $m$ is multiplicative. Then we study how an orbit multiplies an arbitrary skeleton: the \emph{centre lemma}, which is the basic tool for everything that follows. With it we obtain exact irreducibility criteria for skeletons without fixed points, describe what happens in each type $(m,f)$ and, finally, measure how badly uniqueness fails: distinct factorizations, distinct lengths and infinite elasticity, and we see that distinct lengths persist in the submonoid of skeletons with a single fixed point.

\subsection{Atomicity and lengths}

The first step is to make sure that factorizations exist and cannot grow longer without control. The number of sites is enough for this. It is multiplicative, takes positive integer values and equals $1$ exactly on the units (Remark~\ref{rem:units}), so every non-unit has $m\ge2$. This is the role played by the absolute value in the factorization of integers. The \emph{length} of a factorization is its number of irreducible factors. The monoid is \emph{half-factorial} if all factorizations of each element have the same length. For an integer $m\ge1$, we write $\Omega(m)$ for the number of prime factors of $m$ counted with multiplicity; for instance, $\Omega(12)=3$, since $12=2\cdot2\cdot3$.

\begin{proposition}[Atomic, with bounded lengths]\label{prop:atomic}
Every non-unit of $\Sy^\sharp$ is a product of irreducibles, and if $\cE$ has $m$ sites, every factorization of $\cE$ into irreducibles has length at most $\Omega(m)$. In particular, the monoid is atomic and has bounded factorization lengths.
\end{proposition}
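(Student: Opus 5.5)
The argument uses only the number of sites $m$. By Proposition~\ref{prop:mult}, $m$ is a multiplicative map from the monoid of non-empty skeletons to $(\mathbb N_{\ge1},\cdot)$, and by Remark~\ref{rem:units} it takes the value $1$ exactly on the units. It therefore descends to $\Sy^\sharp$, where $m(P,f)=P(1)$, and the only element with $m=1$ is the identity $(1,1)$. Consequently every non-unit has $m\ge2$, and if $\cE=A\times B$ with $A,B$ non-units, then $m(A)$ and $m(B)$ are proper divisors of $m(\cE)$, both at least $2$.

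\emph{Existence.} We argue by strong induction on $m$. Let $\cE$ be a non-unit. If it is irreducible, there is nothing to prove. Otherwise write $\cE=A\times B$ with $A,B$ non-units. Then $m(A),m(B)<m(\cE)$, so by the induction hypothesis both $A$ and $B$ are products of irreducibles, and concatenating the two factorizations gives one for $\cE$. The base case $m=2$ holds automatically: such an element cannot be a product of two non-units, since that would require $m\ge4$. So it is irreducible.

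\emph{Length bound.} Take a factorization $\cE=A_1\times\cdots\times A_\ell$ into irreducibles. Multiplicativity gives $m=\prod_i m(A_i)$ with every $m(A_i)\ge2$. Each factor $m(A_i)$ contributes at least one prime to $m$, so $\ell\le\sum_i\Omega(m(A_i))=\Omega(m)$, because $\Omega$ is completely additive.

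Both statements follow: the monoid is atomic, and every element has bounded factorization lengths, bounded by $\Omega(m)$. No step is a real obstacle. The one point to check is that passing to the reduced monoid is harmless. Multiplying by a unit does not change $m$, and the convention of Remark~\ref{rem:units} excludes the empty skeleton, so $m\ge1$ throughout and the induction is well founded.
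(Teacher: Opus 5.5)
Your proof is correct and follows essentially the same route as the paper: multiplicativity of $m$ with $m\ge2$ on non-units, induction on $m$ for existence, and $m=m_1\cdots m_\ell$ with all $m_i\ge2$ giving $\ell\le\Omega(m)$. Your extra checks, that $m$ descends to $\Sy^\sharp$ as $P(1)$ and that $(1,1)$ is its only element with $m=1$, are correct refinements of the same argument.
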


\begin{proof}
The number of sites $m$ is multiplicative (Proposition~\ref{prop:mult}), and every non-unit has $m\ge2$. If $\cE$ is not irreducible, it is a product of two factors with fewer sites, and induction on $m$ gives the existence of a factorization into irreducibles. If $\cE=\cE_1\times\dots\times\cE_l$ with all factors irreducible, then $m=m_1\cdots m_l$ with all $m_i\ge2$, and therefore $l\le\Omega(m)$.
\end{proof}

Parity gives a first, very useful restriction on the possible factors: the number of sites and the number of fixed points multiply separately, and each factor has to respect the congruence $m\equiv f$ modulo $2$.

\begin{proposition}[Parity]\label{prop:parity}
If $\cE=\cE_1\times\cE_2$, then $m=m_1m_2$, $f=f_1f_2$ and $m_i\equiv f_i$ modulo $2$.
\end{proposition}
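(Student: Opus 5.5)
The first two identities come from Proposition~\ref{prop:mult}. There the number of sites and the number of fixed points were shown to be multiplicative, since $\Phi(\cE_1\times\cE_2)=\Phi(\cE_1)\Phi(\cE_2)$ and a pair $(z,w)$ is fixed by $\sigma_1\times\sigma_2$ exactly when both coordinates are fixed. Evaluating the first identity at $x=1$ gives $m=m_1m_2$, and the second gives $f=f_1f_2$. I would simply quote that proposition for these two equalities.

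For the congruence $m_i\equiv f_i\pmod 2$, the plan is to use the orbit decomposition of each factor separately. This does not depend on the product at all: it holds for every skeleton. The sites of $\cE_i$ split into $f_i$ fixed points and $k_i$ orbits of two distinct sites, so $m_i=2k_i+f_i$, and the congruence follows at once. An alternative route through the encoding is the following. By the definition of $\Sy^\sharp$ we have $f_i\equiv c_0(\Phi_i)\pmod 2$. The non-constant coefficients of the symmetric element $\Phi_i$ come in equal pairs $c_t=c_{-t}$, so $\Phi_i(1)\equiv c_0(\Phi_i)\pmod 2$. Combining these with $\Phi_i(1)=m_i$ gives the same conclusion.

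There is no real obstacle here. The only point to flag is that the statement is read as a restriction on the candidate factors, since the congruence is intrinsic to each skeleton. Two consequences are worth noting. If $m$ is odd, both $m_i$ are odd, so both $f_i$ are odd and hence nonzero. If $f\neq 0$, then $f_1,f_2\ge1$, so each factor has fixed points, and its number of sites is forced to have the parity of its $f_i$. The proof is then a two-line citation of Proposition~\ref{prop:mult} together with the identity $m=2k+f$ from Section~\ref{sec:setting}.
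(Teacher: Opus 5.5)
Your proof is correct and matches the paper's: cite Proposition~\ref{prop:mult} for $m=m_1m_2$ and $f=f_1f_2$, and use $m_i=f_i+2k_i$ for the congruence. Your alternative route through $\Phi_i(1)\equiv c_0(\Phi_i)\pmod 2$ is also valid but unnecessary.
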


\begin{proof}
The first two identities are Proposition~\ref{prop:mult}. The congruence holds because every skeleton has $m_i=f_i+2k_i$.
\end{proof}

With these two tools, elementary as they are, many irreducibles can already be identified.

\begin{proposition}[Elementary irreducibles]\label{prop:elem}
\begin{enumerate}[nosep]
\item Every skeleton with $m$ prime is irreducible; in particular $\cO(t)$, $\cF_2$ and $\cT(s)$ are.
\item The skeletons $\cF_n$ of degree $1$ form a submonoid isomorphic to $(\mathbb Z_{\ge1},\times)$, and $\cF_n$ is irreducible if and only if $n$ is prime.
\item The skeletons with $m=4$, $f=2$ and $k=1$ are irreducible, whatever the ratio.
\end{enumerate}
\end{proposition}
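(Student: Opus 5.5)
All three parts rest on the multiplicativity of $m$ and $f$ (Proposition~\ref{prop:mult}) together with the parity constraint of Proposition~\ref{prop:parity}. The degree can be ignored throughout, since we work in the reduced monoid $\Sy^\sharp$.

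For (1), suppose $m$ is prime and write $\cE=\cE_1\times\cE_2$. Then $m=m_1m_2$, so one factor has $m_i=1$, and by Remark~\ref{rem:units} that factor is a unit. Hence $\cE$ is irreducible. The listed examples have $m=2$, $2$ and $3$ respectively.

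For (2), the blocks $\cF_n$ of degree $1$ correspond to the pairs $(n,n)$, and $(n,n)(n',n')=(nn',nn')$. The map $\cF_n\mapsto n$ is therefore an injective monoid homomorphism onto $(\mathbb Z_{\ge1},\times)$; injectivity follows from Theorem~\ref{thm:semiring}. For irreducibility we must look at the whole monoid $\Sy^\sharp$, not just the submonoid. If $n=ab$ with $a,b\ge2$, then $\cF_n=\cF_a\times\cF_b$ is reducible. If $n$ is prime, part (1) applies since $m=n$. The case $n=1$ is a unit and is excluded.

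For (3), let $\cE$ have $m=4$ and $f=2$, so $\Phi=2+x^t+x^{-t}$ for some ratio $t\ge0$. Any nontrivial factorization must have $m_1=m_2=2$ and $f_1f_2=2$. But a factor with $m_i=2$ must satisfy $f_i\equiv0\pmod 2$ by parity, so $f_i\in\{0,2\}$, and the product $f_1f_2$ lies in $\{0,4\}$. It can never equal $2$, so no factorization exists. This argument uses only $(m,f)$, which is why the conclusion holds for every ratio, including $t=0$, where $\Phi=4$ and one might otherwise worry about $\cF_2\times\cO(0)$; that product has $f=0$, so it is not $\cE$.

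I do not expect a real obstacle here. The only point needing care is in (2): irreducibility of $\cF_p$ must be checked inside $\Sy^\sharp$, where other factors with $m=p$ exist, and not merely inside the submonoid of the $\cF_n$. Appealing to (1) settles this.
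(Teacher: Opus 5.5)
Your proposal is correct and follows the paper's proof essentially step by step. Part (1) uses the multiplicativity of $m$. Part (2) uses $\cF_a\times\cF_b=\cF_{ab}$, with the irreducibility of $\cF_p$ in the whole monoid coming from (1). Part (3) is the same parity argument: the paper phrases it as ``the factor with $f_i=1$ must have odd $m_i$ dividing $4$, hence $m_i=1$,'' which is equivalent to your observation that both factors have $m_i=2$ and therefore even $f_i$.
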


\begin{proof}
(1) A non-trivial factorization needs $m_1,m_2>1$, which is impossible if $m$ is prime.

(2) For these skeletons $f=m$, which is multiplicative, and $\cF_a\times\cF_b=\cF_{ab}$.

(3) Since $f_1f_2=2$, one factor has $f_i=1$; by parity, this factor has $m_i$ odd, and $m_i$ also divides $4$, so $m_i=1$ and the factor is a unit.
\end{proof}

The last example is instructive: a skeleton with four sites can be irreducible even though $4$ is not prime, because the component $f$ imposes its own restrictions. This is the first sign that the arithmetic of $\Sy^\sharp$ does not reduce to that of the numbers $m$.

\subsection{The centre lemma}

To go beyond parity we need to understand how the ratios combine in a product. The basic case is multiplication by a single orbit. By the splitting of orbits (Remark~\ref{rem:splitting}), multiplying $\cO(t)$ by an orbit of ratio $s$ produces the ratios $t+s$ and $|t-s|$, and $t$ is one of their centres (their half-sum if $t\ge s$, their half-difference if $t<s$). This leads to the following notion, which is the key to the section.

The \emph{centres} of a pair $\{x,y\}$ of nonnegative numbers are $\tfrac{x+y}2$ and $\tfrac{|x-y|}2$. We say that the pair is \emph{centred at} $t$ if $t$ is one of its centres.

\begin{lemma}[Product with an orbit]\label{lem:centre}
\begin{enumerate}[nosep]
\item For $t,s\ge0$, the pair $\{t+s,|t-s|\}$ has centres $\{t,s\}$, and every pair $\{x,y\}$ arises in this way from its centres.
\item If $Q$ has $f_Q$ fixed points and orbits of ratios $s_1,\dots,s_j$, then $\cO(t)\times Q$ has $f=0$ and ratios
\[
\underbrace{t,\dots,t}_{f_Q}\ \cup\ \bigcup_i\{t+s_i,\ |t-s_i|\}.
\]
\item Consequently, a skeleton with $f=0$ is of the form $\cO(t)\times Q$ for some $Q$ if and only if its ratios can be divided into copies of $t$ and pairs centred at $t$. In that case one can take for $Q$ the skeleton with one fixed point for each copy of $t$ and one orbit of ratio $s$ for each pair, where $s$ is the other centre of the pair.
\end{enumerate}
\end{lemma}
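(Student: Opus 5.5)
We prove the three parts in order; each one follows from the previous together with the encoding of Theorem~\ref{thm:semiring}.

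For part~(1), take $t,s\ge0$. The half-sum of the pair $\{t+s,|t-s|\}$ is $\max(t,s)$ and its half-difference is $\min(t,s)$, because $t+s+|t-s|=2\max(t,s)$ and $t+s-|t-s|=2\min(t,s)$. So the centres are $\{t,s\}$. Conversely, let $\{x,y\}$ be a pair with $x\ge y\ge0$, and set $a=\tfrac{x+y}2$ and $b=\tfrac{x-y}2$. Then $a\ge b\ge0$, $a+b=x$ and $|a-b|=y$, so the pair arises from its centres.

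For part~(2), we compute in $\Sy^\sharp$. Write $Q=(f_Q+\sum_i(x^{s_i}+x^{-s_i}),\,f_Q)$ and multiply it by $(x^t+x^{-t},0)$. The second component is $0\cdot f_Q=0$. In the first component, each fixed point of $Q$ contributes $x^t+x^{-t}$, which is one orbit of ratio $t$. Each orbit of $Q$ contributes $(x^t+x^{-t})(x^{s_i}+x^{-s_i})$, which is the pair of orbits of ratios $t+s_i$ and $|t-s_i|$ by Remark~\ref{rem:splitting}. Since $f=0$, every occurrence of the constant term comes from orbits of ratio $0$. Therefore, by the injectivity part of Theorem~\ref{thm:semiring}, the multiset of ratios of the product is exactly the one listed, with multiplicities.

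Part~(3) follows from (1) and (2). For the "only if" direction, the ratios of $\cO(t)\times Q$ split as in (2): there are $f_Q$ copies of $t$, and the pairs $\{t+s_i,|t-s_i|\}$, each centred at $t$ by (1). For the "if" direction, suppose the ratios of a skeleton $\cE$ with $f=0$ split into $a$ copies of $t$ and pairs $\{x_j,y_j\}$ centred at $t$. By (1), each such pair equals $\{t+s_j,|t-s_j|\}$, where $s_j$ is its other centre. Let $Q$ be the skeleton with $a$ fixed points and orbits of ratios $s_j$. It exists by Theorem~\ref{thm:class}, and if $a=0$ and there are no pairs then $\cE$ is empty, which is excluded. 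By (2), $\cO(t)\times Q$ has $f=0$ and the same multiset of ratios as $\cE$, so the two are isomorphic by Theorem~\ref{thm:class}.

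The only delicate point is that "other centre" must be well defined when both centres coincide or when $s=t$. If the two centres of a pair are equal, both equal $t$. That happens only when the pair is $\{2t,0\}$, and then $s=t$ does give $\{2t,0\}$. Similarly, $s=0$ gives the pair $\{t,t\}$, which is distinct from two single copies of $t$ coming from fixed points, although both describe the same multiset of ratios. This ambiguity does not affect the statement, because (3) only asks for some $Q$.
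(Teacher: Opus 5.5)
Your proposal is correct and follows the paper's proof: (1) by computing the half-sum and half-difference directly, (2) by expanding $(x^t+x^{-t})\Phi(Q)$ with the splitting identity and $f=0\cdot f_Q$, and (3) by reading (2) backwards, using (1) to recover each $s_i$ as the other centre of its pair. Your additional checks are accurate refinements that the paper leaves implicit: the degenerate pairs $\{2t,0\}$ and $\{t,t\}$, and the fact that $Q$ is non-empty.
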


\begin{proof}
(1) If $t\ge s$, then $\tfrac{(t+s)+(t-s)}2=t$ and $\tfrac{(t+s)-(t-s)}2=s$; the case $t<s$ is symmetric. Conversely, for $x\ge y$ it suffices to take $t=\tfrac{x+y}2$ and $s=\tfrac{x-y}2$.

(2) Expand $(x^t+x^{-t})\bigl(f_Q+\sum_i(x^{s_i}+x^{-s_i})\bigr)$: each fixed point of $Q$ gives an orbit of ratio $t$, and each orbit of ratio $s_i$ gives, by the splitting, the orbits of ratios $t+s_i$ and $|t-s_i|$. The fixed points multiply: $f=0\cdot f_Q=0$.

(3) This is (2) read backwards, using (1) to recover each $s_i$ as the other centre of its pair.
\end{proof}

One detail deserves to be pointed out, because it will be used several times: a pair $\{t,t\}$ is centred at $t$ (and at $0$). This is why the copies of $t$ in (2) can be grouped two by two into pairs centred at $t$, and only one is left over when their number is odd.

The same computation, carried out for two arbitrary factors, describes all the ratios of a product. As a consequence, the possible ratios of the factors lie in a finite list that can be read off from the skeleton itself, and reducibility can be decided by a finite search.

\begin{proposition}[General product and decidability]\label{prop:general}
\begin{enumerate}[nosep]
\item If $A$ has $f_A$ fixed points and ratios $a_1,\dots,a_i$, and $B$ has $f_B$ fixed points and ratios $b_1,\dots,b_j$, then $A\times B$ has $f_Af_B$ fixed points and ratios
\[
\underbrace{a_1,\dots,a_i}_{f_B\ \text{times each}}\ \cup\ \underbrace{b_1,\dots,b_j}_{f_A\ \text{times each}}\ \cup\ \bigcup_{p,q}\{a_p+b_q,\ |a_p-b_q|\}.
\]
\item Let $R$ be the multiset of ratios of $\cE$. If $\cE\cong A\times B$ with $A,B$ non-units, every ratio of $A$ and of $B$ lies in the finite set of \emph{candidate ratios}
\[
\mathrm{Cand}(R)=R\cup\bigl\{\tfrac{u+v}2,\ \tfrac{|u-v|}2:\ u,v\in R\bigr\},
\]
formed by the ratios of $\cE$ and the half-sums and half-differences of each pair of them.
Consequently, whether $\cE$ is reducible, and what all its factorizations are, is determined by a finite search.
\end{enumerate}
\end{proposition}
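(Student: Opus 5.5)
For part~(1), expand the product $\Phi(A)\Phi(B)$ term by term, grouping the sites of each factor into fixed points and orbits, exactly as in the proof of Lemma~\ref{lem:centre}(2). The product $f_A\cdot f_B$ of fixed-point parts gives the constant $f_Af_B$, and these are precisely the fixed points of $A\times B$, by Proposition~\ref{prop:mult}. Each of the $f_B$ fixed points of $B$ times an orbit $x^{a_p}+x^{-a_p}$ of $A$ gives an orbit of ratio $a_p$. Symmetrically, the fixed points of $A$ contribute the $b_q$, each repeated $f_A$ times. Each pair of orbits contributes two orbits, of ratios $a_p+b_q$ and $|a_p-b_q|$, by Remark~\ref{rem:splitting}. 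Since the class is determined by $f$ and by the multiset of ratios (Theorem~\ref{thm:semiring}, via $c_0=f+2n_0$), listing these ratios is exactly the claim. The only care needed is that orbits of ratio $0$ arising from $a_p=b_q$ are counted as orbits, not fixed points. This is consistent because the fixed points of $\sigma_A\times\sigma_B$ are only the pairs of fixed points.

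For part~(2), I fix a ratio $a$ of $A$ and show that it lies in $\mathrm{Cand}(R)$, with three cases according to $B$. If $f_B\ge1$, then $a$ itself appears among the ratios of $\cE$ by part~(1), so $a\in R$. If $f_B=0$, then since $B$ is a non-unit with $m_B\ge2$, it has at least one orbit, of ratio $b$ say. By part~(1), both $a+b$ and $|a-b|$ lie in $R$, and by Lemma~\ref{lem:centre}(1) the centres of this pair are $\{a,b\}$. So $a$ is the half-sum or the half-difference of two elements $u,v\in R$, namely $u=a+b$ and $v=|a-b|$. The same argument with the roles of $A$ and $B$ exchanged handles the ratios of $B$. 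This shows that every ratio of either factor lies in the finite set $\mathrm{Cand}(R)$.

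For decidability, note that a non-trivial factorization $\cE\cong A\times B$ requires $m_Am_B=m$ and $f_Af_B=f$. Parity must also hold (Proposition~\ref{prop:parity}), and the numbers $k_A,k_B$ of orbits are determined by the sizes, so there are only finitely many possible sizes. Each orbit of a factor takes its ratio in the finite set $\mathrm{Cand}(R)$. Hence there are finitely many candidate pairs $(A,B)$, and for each one part~(1) decides whether the product equals $\cE$. Reducibility is therefore decided by a finite search. Recursing on the factors, which have fewer sites, lists all factorizations into irreducibles, and this terminates by Proposition~\ref{prop:atomic}. I expect no real obstacle here. The one point to handle cleanly is the case $f_B=0$, where a non-unit factor must still be known to contain an orbit; this is immediate because $m_B\ge2$ and $f_B=0$ force $k_B\ge1$.
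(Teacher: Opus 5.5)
Your proposal is correct and follows the paper's proof essentially step by step. Part~(1) expands $\Phi(A)\Phi(B)$ using the splitting of orbits. Part~(2) uses the same case split on $f_B$: a non-unit with $f_B=0$ has an orbit of some ratio $b$, so $a$ is a centre of $\{a+b,|a-b|\}\subset R$. Decidability follows by bounding the sizes, the numbers of fixed points and the ratio choices of the candidate factors. Your added remark is also correct: orbits of ratio $0$ coming from $a_p=b_q$ stay orbits and are recovered through $c_0=f+2n_0$, a detail the paper leaves implicit.
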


\begin{proof}
(1) Expand $\Phi(A)\Phi(B)$, with $\Phi(A)=f_A+\sum_p(x^{a_p}+x^{-a_p})$ and similarly for $B$, using the splitting of orbits: the product of a fixed point and an orbit is an orbit of the same ratio, and the product of two orbits gives the two ratios $a_p+b_q$ and $|a_p-b_q|$. The fixed points are counted by Proposition~\ref{prop:mult}.

(2) Let $a$ be a ratio of $A$. If $f_B>0$, then $a\in R$ by (1). If $f_B=0$, then $B$ has some orbit, of ratio $b$ say, because $m_B\ge2$; by (1), the pair $\{a+b,|a-b|\}$ is contained in $R$, and $a$ is one of its centres (Lemma~\ref{lem:centre}(1)). The same holds for $B$. The multiplicities of the ratios of the factors are bounded by $k$, and the values of $m_A$, $m_B$, $f_A$, $f_B$ by $m$ and $f$; so only finitely many pairs $(A,B)$ need to be tried. Iterating over the factors gives all factorizations.
\end{proof}

The finite search settles any concrete case, but says nothing in general. To obtain explicit criteria we restrict ourselves to skeletons without fixed points, where the centre lemma takes its cleanest form.

\subsection{Exact criteria}

If a skeleton without fixed points has a factor consisting of a single orbit, Lemma~\ref{lem:centre}(3) tells us exactly how to recognize it: its ratios must be divided into copies of $t$ and pairs centred at $t$, for a common $t$. The question is whether it is enough to look for factors of this kind, that is, whether every reducible skeleton without fixed points has an orbit as a factor. Surprisingly, the answer depends only on the arithmetic of the number of orbits $k$, and the boundary between the two behaviours is sharp.

\begin{theorem}[Orbit factors: the exact range]\label{thm:orbitfactor}
We call $k\ge2$ \emph{admissible} if $k\in\{8,16\}$, or $k$ is prime, or $k$ is twice a prime. (For $k=1$ the skeleton is an orbit, which is irreducible.)
\begin{enumerate}[nosep,label=(\alph*)]
\item If $k$ is admissible, a skeleton $\cE$ with $f=0$, $k$ orbits and multiset of ratios $R$ is reducible if and only if it has an orbit factor, $\cE\cong\cO(t)\times Q$. Equivalently:
\begin{enumerate}[nosep,label=(\arabic*)]
\item if $k$ is even, $\cE$ is reducible if and only if $R$ can be divided into $k/2$ pairs with a common centre;
\item if $k$ is odd, $\cE$ is reducible if and only if $R$ contains an element $t$ such that $R\setminus\{t\}$ can be divided into pairs centred at $t$.
\end{enumerate}
\item If $k$ is not admissible, there are reducible skeletons with $f=0$ and $k$ orbits that have no orbit factor.
\end{enumerate}
In particular, the criterion holds for every $2\le k\le8$, and the first value at which it fails is $k=9$ (Example~\ref{ex:k9}).
\end{theorem}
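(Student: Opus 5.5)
The plan is to split according to how the orbit count of a product distributes over the two factors. Suppose $\cE\cong A\times B$ with $A,B$ non-units and $f=0$. Since $f_Af_B=0$, we may label the factors so that $f_B=0$. Then $m=m_Am_B=m_A\cdot 2k_B$, so
\[
k=m_A\,k_B,\qquad m_A\ge2,\quad k_B\ge1 .
\]
Two reduction devices will be used repeatedly.
\begin{itemize}[nosep]
\item[(i)] Since $f_B=0$, Theorem~\ref{thm:nocancel} lets us replace $A=(P_A,f_A)$ by any $(P_A,f')\in\Sy^\sharp$ without changing the product. In particular, when $m_A$ is even, $c_0(P_A)$ is even and we may take $f'=0$.
\item[(ii)] A skeleton with $f=0$ and at most two orbits is either an orbit, or has an orbit factor. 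Indeed, by Lemma~\ref{lem:centre}(1), two ratios $\{x,y\}$ are always the splitting of $\cO(c)\times\cO(d)$, where $c,d$ are their centres.
\end{itemize}
Hence $\cE$ has an orbit factor as soon as $k_B\le2$, or $m_A\in\{2,4\}$. In the latter case, apply (i) to make $f_A=0$, and then (ii) to $A$.

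\emph{Part (a).} It remains to check that for admissible $k$ every factorization $k=m_Ak_B$ with $m_A\ge2$ falls into one of the cases above.
\begin{itemize}[nosep]
\item If $k$ is prime, then $m_A=k$ and $k_B=1$.
\item If $k=2p$, the possibilities are $k_B\in\{1,2\}$, or $k_B=p$ with $m_A=2$.
\item If $k=8$, then $k_B\in\{1,2\}$, or $k_B=4$ with $m_A=2$.
\item If $k=16$, then $k_B\le2$, or $m_A\in\{2,4\}$.
\end{itemize}
Once an orbit factor $\cO(t)\times Q$ is found, the reformulations (1) and (2) follow from Lemma~\ref{lem:centre}(3). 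The ratios split into $f_Q$ copies of $t$ and $k_Q$ pairs centred at $t$. The copies of $t$ pair up among themselves, since $\{t,t\}$ is centred at $t$, leaving one copy exactly when $k$ is odd. Conversely, such a division produces $Q$.

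\emph{Part (b).} We need to show that a non-admissible $k$ admits a factorization $k=m_Ak_B$ that none of the reductions covers, namely with $k_B\ge3$ and $m_A\ge3$, $m_A\ne4$ (and $m_A$ even forces $f_A=0$ to be removable via (i), so effectively $A$ with $f_A=0$ needs $\ge3$ orbits).
\begin{itemize}[nosep]
\item If $k$ has an odd divisor $a\ge3$ with $k/a\ge3$, take $m_A=a$, $f_A=1$ and $k_B=k/a$. If $k=2p$ the only option is $k/a=2$, which is excluded; this is exactly why $2p$ is admissible.
\item If $k=2^e$ with $e\ge5$, take $m_A=8$ and $k_B=2^{e-3}\ge4$.
\end{itemize}
This gives $k=9$ as the first failure, via $3\cdot3$. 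In each case, take $A$ and $B$ with generic, e.g.\ $\mathbb Q$-linearly independent, ratios $a_p$, $b_q$. Then show that the multiset of ratios of $A\times B$, which is given by Proposition~\ref{prop:general}(1), admits no common centre $t$ compatible with Lemma~\ref{lem:centre}(3).

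\emph{Main obstacle.} I expect the genericity argument to be the main obstacle. The candidate centres are the finitely many half-sums and half-differences of the ratios, i.e.\ the set $\mathrm{Cand}$ of Proposition~\ref{prop:general}(2). For each such $t$, one must check that the ratios cannot all be matched into pairs centred at $t$. I would do this with linear-independence bookkeeping.
\begin{itemize}[nosep]
\item Write every ratio as a $\pm$-combination of at most one $a_p$ and one $b_q$.
\item A pair is centred at $t$ only if the coefficient vectors satisfy the corresponding linear relation.
\item The ratios involving an unused generator then cannot be matched.
\end{itemize}
In concrete terms, a fixed $t$ of the form $a_p$, $b_q$, or $(a_p\pm b_q)$-type can only pair up the ratios involving that specific generator. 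Since $k_B\ge3$ and $A$ has at least two independent "directions", some ratios remain unmatched. Concrete small instances, such as $k=9$ in Example~\ref{ex:k9}, would be verified first to calibrate the argument.
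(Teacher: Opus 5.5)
Your part (a) is correct and is essentially the paper's argument: you normalize an even-sized factor to $f=0$ via Theorem~\ref{thm:nocancel}, note that a factor with $f=0$ and $2$ or $4$ sites yields an orbit factor, and check that admissible $k$ leaves no other split. Enumerating $k=m_Ak_B$ directly, instead of splitting on the parity of $m_B$, is only cosmetic. You should still add the converse: an orbit factor gives reducibility because $m_Q=k\ge2$.

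The gap is in part (b). You build reducible examples of the right shape, but you do not prove that they have no orbit factor, and the mechanism you sketch is false. A candidate $t$ does not ``only pair up the ratios involving that specific generator''. Take your first construction with $a=3$, that is, $A=\cT(a_1)$ and $B$ with ratios $b_1,\dots,b_n$. The value $t=a_1/2$ is a centre of $\{b_l,\,a_1+b_l\}$ for every $l$, so ratios involving generators absent from $t$ can be matched. Also, $A$ then has a single orbit, not ``at least two directions''.

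What rules out an orbit factor here is a parity count:
\begin{enumerate}
\item Since $t$ is an element of $R$ or a centre of a pair from $R$, it involves at most two of the $b_q$. As $n=k/a\ge3$, some $b_l$ is absent from $t$.
\item The partner of $x$ in a pair centred at $t$ is $2t-x$ or $x\pm2t$, and its $b_l$-coefficient is still $\pm1$.
\item Hence the block $R_l=\{b_l\}\cup\{a_p+b_l,\ |a_p-b_l|\}_p$, which contains no copy of $t$, must be perfectly matched within itself. This is impossible, because it has odd size $a$.
\end{enumerate}
This is the only place where the oddness of $a$ is used, and your sketch never uses it.

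For $k=2^e$ every block has even size $8$, so parity gives nothing, and you need the finer argument of the paper. Choose simultaneously an $a_p$ and a $b_q$ absent from $t$; this is possible because there are $4$ and $2^{e-3}\ge4$ of them, and $t$ involves at most two of each. A centre of a pair $\{a_p+b_q,\,y\}$ with zero coefficients at $a_p$ and $b_q$ forces $y=a_p+b_q$. This is excluded because the ratios are distinct.

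For comparison, the paper's first case uses $\cE=\cF_g\times A$, where each ratio has odd multiplicity $g$, which rules out $t=0$. For $t\ne0$, a ratio $a_l$ absent from $t$ then lies in no pair centred at $t$ at all. Your variant, with one fixed point and $(a-1)/2$ orbits in $A$, is also valid, but only through the parity argument above.
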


The idea of the proof of (a) is the following: factors with two or four sites and no fixed points are orbits or products of two orbits, so if a factor has this size, there is already an orbit factor. If all factors are larger, the number of orbits must have a specific arithmetic form, and this form is precisely what admissibility excludes. For (b) we construct explicit examples with ``generic'' ratios, where there are no accidental coincidences that would allow centred pairs to be formed.

\begin{proof}
(a) Let $\cE\cong A\times B$ with $A,B$ non-units. Since $f_Af_B=0$, we may assume $f_A=0$, and then $m_A$ is even.

By Theorem~\ref{thm:nocancel}, we may replace $B$ by any $(\Phi(B),f')\in\Sy^\sharp$ without changing $A\times B$. If $m_B$ is even, we take $f'=0$, which lies in $\Sy^\sharp$ because $c_0(\Phi(B))\equiv\Phi(B)(1)=m_B$ modulo $2$.

A skeleton with $f=0$ and $m=2$ is an orbit. One with $f=0$ and $m=4$ has two orbits, of ratios $a,b$ say, and it is $\cO(\tfrac{a+b}2)\times\cO(\tfrac{|a-b|}2)$ by Lemma~\ref{lem:centre}(1). So $\cE$ has an orbit factor if $m_A\in\{2,4\}$ or $m_B\in\{2,4\}$.

Otherwise, $m_A\ge6$ is even, and there are two possibilities:
\begin{itemize}[nosep]
\item $m_B\ge3$ is odd. Then $k=m/2=(m_A/2)\,m_B$ has an odd divisor $g=m_B\ge3$ with $k/g\ge3$.
\item $m_B\ge6$ is even. Then $k=2\,(m_A/2)(m_B/2)$, and $k/2$ is a product of two integers $\ge3$.
\end{itemize}
The first is impossible if $k$ is a power of $2$, a prime or twice a prime. The second is impossible if $k$ is odd, twice a prime, or a power of $2$ not exceeding $16$, because then $k/2\le8<9$. So neither occurs for admissible $k$, and reducible skeletons have an orbit factor. Conversely, $\cO(t)$ has $m=2$ and $Q$ has $m=k\ge2$, so neither is a unit.

For the reformulation: $\cE\cong\cO(t)\times Q$ means, by Lemma~\ref{lem:centre}(3), that $R$ consists of $f_Q$ copies of $t$ and of pairs centred at $t$, with $f_Q\equiv m_Q=k$ modulo $2$. As observed after Lemma~\ref{lem:centre}, the copies of $t$ can be grouped two by two into centred pairs, and one is left over exactly when $k$ is odd. Conversely, a division as in (1) gives a $Q$ with $f_Q=0$ whose ratios are the other centres of the pairs, and a division as in (2) gives a $Q$ with $f_Q=1$.

(b) If $k$ is not admissible, then either $k$ has an odd prime divisor $g$ with $e=k/g\ge3$, or $k=2^h$ with $h\ge5$. Let $a_1,a_2,\dots$ be positive real numbers linearly independent over $\mathbb Q$.

\emph{First case.} Let $A$ be the skeleton with $f=0$ and ratios $a_1,\dots,a_e$, and let $\cE=\cF_g\times A$. It has $f=0$, $k$ orbits, and its multiset of ratios consists of each $a_i$ repeated $g$ times.

\emph{Second case.} Let $A$ and $B$ be the skeletons with $f=0$ and ratios $a_1,\dots,a_4$ and $a_5,\dots,a_{4+e'}$, with $e'=2^{h-3}$, and let $\cE=A\times B$. It has $f=0$, $k=2\cdot4\cdot e'$ orbits, and pairwise distinct ratios $a_i+a_j$ and $|a_i-a_j|$ with $i\le4<j$.

In both cases $\cE$ is reducible. Suppose that $\cE\cong\cO(t)\times Q$; then $R$ consists of copies of $t$ and of pairs centred at $t$, and a pair $\{x,y\}$ has the centres $\tfrac{x+y}2$ and $\tfrac{|x-y|}2$. Every ratio is a rational combination of the $a_i$, and therefore so is $t$.

If $t=0$, each pair consists of two equal elements, and $0\notin R$. This forces all multiplicities to be even, which fails in both cases: in the first, each ratio appears $g$ times, with $g$ odd, and in the second the ratios are distinct.

So $t\ne0$. Since $t$ is a ratio or a centre of a pair, it involves at most two of the $a_i$ in the first case, and at most two indices $\le4$ and two indices $>4$ in the second.

In the first case, since $e\ge3$, some $a_l$ does not appear in $t$. Then $a_l\ne t$, and a pair containing $a_l$ has centres $a_l$ and $0$, or centres in which $a_l$ appears with coefficient $\pm\frac12$; none of them is $t$, which is a contradiction.

In the second case, we choose $p\le4<q$ that do not appear in $t$; this is possible because $4\ge3$ and $e'\ge4$. Then $x=a_p+a_q\ne t$, so $x$ lies in a pair $\{x,y\}$ centred at $t$. Every element of $R$ has exactly two nonzero coefficients, equal to $\pm1$, one at an index $\le4$ and one at an index $>4$. A centre $\tfrac{x+y}2$ or $\tfrac{|x-y|}2$ of the pair has zero coefficients at $a_p$ and $a_q$ only if $y=\pm(a_p+a_q)$, that is, $y=x$; and this is excluded because the ratios are distinct. This contradicts the choice of $p$ and $q$.
\end{proof}

For few orbits, the theorem translates into very concrete rules on the ratios, which allow irreducibility to be decided at a glance.

\begin{corollary}[Few orbits]\label{cor:small}
Let $\cE$ be a skeleton without fixed points with $k$ orbits; we identify it with its multiset of ratios.
\begin{enumerate}[nosep]
\item If $k=2$, $\cE$ is always reducible: $\{a,b\}=\cO(\tfrac{a+b}2)\times\cO(\tfrac{|a-b|}2)$.
\item If $k=3$, $\cE=\{a,b,c\}$ is reducible if and only if one of the three ratios is the half-sum or the half-difference of the other two. In particular, $\{1,2,6\}$, $\{1,2,7\}$, $\{1,2,9\}$ and $\{2,3,11\}$ are irreducible.
\item If $k=4$, $\cE$ is reducible if and only if its four ratios split into two pairs centred at the same $t$.
\end{enumerate}
\end{corollary}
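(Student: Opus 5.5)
The plan is to read all three items off Theorem~\ref{thm:orbitfactor}(a), since $k=2,3,4$ are admissible: $2$ and $3$ are prime and $4=2\cdot2$ is twice a prime. So for these $k$ a skeleton with $f=0$ is reducible exactly when it has an orbit factor, and the two reformulations (1) and (2) of that theorem become the concrete rules in the corollary.

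For $k=2$ we need no criterion at all. Lemma~\ref{lem:centre}(1) says that $\{a,b\}$ has centres $\tfrac{a+b}2$ and $\tfrac{|a-b|}2$, and Remark~\ref{rem:splitting} gives $\cO(\tfrac{a+b}2)\times\cO(\tfrac{|a-b|}2)$ with ratios $a$ and $b$. Both factors have $m=2$, so neither is a unit. For $k=4$ we apply reformulation (1) with $k/2=2$: the ratios split into two pairs with a common centre $t$, which is exactly the statement.

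For $k=3$ we apply reformulation (2). We need some $t\in R$ such that the remaining two ratios $\{x,y\}$ form a pair centred at $t$. That means $t=\tfrac{x+y}2$ or $t=\tfrac{|x-y|}2$, i.e.\ one of the three ratios is the half-sum or half-difference of the other two. To check the listed examples we run through the three choices of $t$ in each:
\begin{itemize}[nosep]
\item For $\{1,2,6\}$ the candidate values are $\tfrac{2+6}2=4$, $\tfrac{6-2}2=2$, and so on. Here $t=2$ with pair $\{1,6\}$ would need $2\in\{3.5,2.5\}$, which fails. $t=1$ with pair $\{2,6\}$ needs $1\in\{4,2\}$, which fails. $t=6$ with pair $\{1,2\}$ needs $6\in\{1.5,0.5\}$, which fails.
\item For $\{1,2,7\}$: $t=1$ gives centres $\{4.5,2.5\}$, $t=2$ gives $\{4,3\}$, and $t=7$ gives $\{1.5,0.5\}$. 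None contains $t$.
\item For $\{1,2,9\}$: the centres are $\{5.5,3.5\}$, $\{5,4\}$ and $\{1.5,0.5\}$. None contains $t$.
\item For $\{2,3,11\}$: the centres are $\{7,4\}$, $\{6.5,4.5\}$ and $\{2.5,0.5\}$. None contains $t$.
\end{itemize}

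There is no real obstacle; the only care needed is to note that the admissibility hypothesis covers $k=2,3,4$. The $k=3$ checks are routine, one value per choice of $t$.
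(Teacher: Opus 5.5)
Your proposal is correct and follows the paper's proof. Item (1) comes from Lemma~\ref{lem:centre}(1), and items (2) and (3) come from reformulations (2) and (1) of Theorem~\ref{thm:orbitfactor}(a), using that $3$ is prime and $4$ is twice a prime. Your checks of the four examples are accurate; the aside $\tfrac{6-2}{2}=2$ is harmless, since only whether the third ratio equals a centre of the other two matters, and you test exactly that.
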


\begin{proof}
(1) This is Lemma~\ref{lem:centre}(1). (2) Since $3$ is prime, Theorem~\ref{thm:orbitfactor}(a)(2) applies: we need a $t\in R$ such that the other two ratios form a pair centred at $t$, that is, $t$ is their half-sum or their half-difference. In the four examples one checks directly that none of the three ratios is. (3) Since $4$ is twice a prime, Theorem~\ref{thm:orbitfactor}(a)(1) applies.
\end{proof}

The examples in (2) show that parity is necessary but not sufficient: they have $m=6$ and $f=0$, which parity would allow to decompose as a factor with $m=2$, $f=0$ times one with $m=3$, $f=1$, and yet they are irreducible.

\begin{example}
$\{1,3,5,7\}$ is reducible: $\{1,7\}$ and $\{3,5\}$ share the centre $4$. It is $\cO(4)\times Q$, where $Q$ is the skeleton with $f=0$ and ratios $\{1,3\}$, the other centres of the two pairs. On the other hand, $\{1,2,6,20\}$ is irreducible.
\end{example}

The theorem says that, for admissible $k$, it is enough to look for orbit factors, but not that every factorization has an orbit factor. The following remark clarifies this point and records the computational checks we have carried out.

\begin{remark}
For $k=6$ there are factorizations in which no factor is an orbit, for instance $\cT(s)\times Q$ with $Q$ having $m=4$ and $f=0$. But, by the proof above, they always also admit an orbit factor, because $Q$ is itself a product of two orbits.

Theorem~\ref{thm:orbitfactor} has been checked against an exhaustive search for $k=6$ (the $924$ multisets of six ratios in $\{0,\dots,6\}$, with factors ranging over all skeletons with half-integer ratios), and against $3600$ random products for $k=8$ and $2700$ for $k=16$, with no discrepancy.
\end{remark}

The first non-admissible value is $k=9=3\cdot3$, and the example that shows it is the simplest possible: three copies of an irreducible with three orbits.

\begin{example}[The first failure: $k=9$]\label{ex:k9}
Let $C$ be the irreducible skeleton with $f=0$ and ratios $\{1,2,6\}$ (Corollary~\ref{cor:small}). Then $\cF_3\times C$ has $f=0$, $k=9$ and ratios $\{1,1,1,2,2,2,6,6,6\}$. It is reducible, but it has no orbit factor, and the criterion of Theorem~\ref{thm:orbitfactor}(a) fails at $k=9$, the smallest non-admissible value.

Indeed, $\Phi(\cF_3\times C)=3\,(x^{\pm1}+x^{\pm2}+x^{\pm6})$. If $\cF_3\times C\cong\cO(t)\times Q$, then $\Phi=(x^t+x^{-t})\,\Phi(Q)$, that is, $c_y(\Phi)=c_{y-t}(\Phi(Q))+c_{y+t}(\Phi(Q))$ for all $y$. If $t=0$, $c_1(\Phi)=2c_1(\Phi(Q))$ would be even, but it equals $3$. If $t\ne0$ and $y\in\{1,2,6\}$, one of the two summands is nonzero, and then $x^{y-2t}$ or $x^{y+2t}$ occurs in $\Phi$. So $|2t|$ lies in $\{|y-z|:z=\pm1,\pm2,\pm6\}\setminus\{0\}$, which is $\{1,2,3,5,7\}$, $\{1,3,4,8\}$ and $\{4,5,7,8,12\}$ for $y=1,2,6$, and these three sets have no common element.

The phenomenon does not depend on $\cF_3$: the product $C\times\cT(1)$ has ratios $\{0,1,1,2,2,3,5,6,7\}$ and has no orbit factor either, and the same happens for $C\times\cT(s)$ for every integer $1\le s\le29$.
\end{example}

A criterion whose validity depends on the arithmetic of a number of components is not peculiar to skeletons; a phenomenon of the same kind occurs in graph theory.

\begin{remark}[Comparison with disconnected graphs]
The shape of Theorem~\ref{thm:orbitfactor}, a criterion whose validity depends on the arithmetic of the number of orbits, resembles, with different exceptional values, van de Woestijne's analysis of the factorizations of disconnected graphs for the Cartesian, strong and direct products \cite{vdWoestijne2012}. There, factorization is unique when the number of components is prime or equal to $1$, $4$, $8$ or $9$ (under additional conditions), and the non-unique factorizations with $6$ and $10$ components are described explicitly. In both cases the question reduces to factoring in a semiring of polynomials with nonnegative coefficients, and semirings of graphs embed in monoid algebras much as $\Phi$ embeds $\Sy$ in $\ZR$ \cite{ImrichKlepSmertnig}.
\end{remark}

\subsection{Types and genericity}

The criteria above look at the actual ratios. One may also ask what can be said by looking only at the size and the number of fixed points. We call the pair $(m,f)$ the \emph{type} of a skeleton. The type fixes the number of orbits, $k=(m-f)/2$, but not the ratios. Once the type is fixed, the ratios of the $k$ orbits, in any order, form a vector $(t_1,\dots,t_k)$ in the \emph{space of ratios} $[0,\infty)^k$; by Theorem~\ref{thm:class}, this vector, up to the order of the coordinates, determines the skeleton. By Proposition~\ref{prop:parity}, for a skeleton of type $(m,f)$ to factor, the type has to decompose in a way compatible with parity. We say that a type is \emph{decomposable} if there are $m=m_1m_2$ with $m_1,m_2\ge2$ and $f=f_1f_2$ with $0\le f_i\le m_i$ and $f_i\equiv m_i$ modulo $2$.

\begin{proposition}[Arithmetic irreducibility]\label{prop:types}
\begin{enumerate}[nosep]
\item Every skeleton of a non-decomposable type $(m,f)$ is irreducible. If the type is decomposable, some skeleton of that type is reducible.
\item Every skeleton with $k=1$ is irreducible.
\item The skeletons with $k=0$ are the $\cF_n$, irreducible if and only if $n$ is prime. The skeletons of type $(4,0)$ are all reducible.
\end{enumerate}
\end{proposition}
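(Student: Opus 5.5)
The plan is to use the parity bookkeeping of Proposition~\ref{prop:parity} together with the surjectivity part of Theorem~\ref{thm:semiring}, which says that every pair in $\Sy^\sharp$ is realized by a skeleton. Each of the three items then follows with little computation.

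\emph{Item (1).} Suppose $\cE$ has type $(m,f)$ and $\cE\cong\cE_1\times\cE_2$ with both factors non-units. Write $(m_i,f_i)$ for the type of $\cE_i$. By Remark~\ref{rem:units} we have $m_i\ge2$. By Proposition~\ref{prop:mult} we have $m=m_1m_2$ and $f=f_1f_2$. By Proposition~\ref{prop:parity} we have $f_i\equiv m_i$ modulo $2$, and clearly $0\le f_i\le m_i$. So the type is decomposable, and contrapositively a non-decomposable type forces irreducibility.

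For the converse part of (1), take a decomposition $m=m_1m_2$, $f=f_1f_2$ with the stated conditions. Set $k_i=(m_i-f_i)/2$, which is a nonnegative integer, and let $\cE_i=\cF_{f_i}\sqcup k_i\,\cO(0)$, all of the same degree. This is the pair $(m_i,f_i)\in\Sy^\sharp$, since $c_0=m_i\ge f_i$ with the right parity, and it is non-empty because $m_i\ge2$. Then $\cE_1\times\cE_2$ has $\Phi=m_1m_2=m$ and $f=f_1f_2$, so it has type $(m,f)$. It is reducible, because both factors have $m_i\ge2$.

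\emph{Item (2).} A skeleton with $k=1$ has $m=f+2$. Suppose $\cE_1\times\cE_2$ has $k=1$ with both factors non-units.
\begin{itemize}[nosep]
\item If $f=0$, then $m=2$ is prime and Proposition~\ref{prop:elem}(1) applies.
\item If $f\ge1$, then $f_1,f_2\ge1$. The number of orbits of the product is $(m_1m_2-f_1f_2)/2$ (Proposition~\ref{prop:orbits}, or directly from Proposition~\ref{prop:general}(1)). If some $\cE_i$ has an orbit, then by Proposition~\ref{prop:general}(1) that orbit appears $f_j\ge1$ times from the cross terms. Every orbit pair also contributes two more orbits. Rather than tracking these, it is easier to count: $m_1m_2-f_1f_2=2$ with $m_i=f_i+2k_i$. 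Expanding gives $2k_1f_2+2k_2f_1+4k_1k_2=2$.
\end{itemize}
Both factors are non-units with $f_i\ge1$, so each has $k_i\ge1$ or $f_i\ge2$. If both $k_i\ge1$, the left side is at least $4$, a contradiction. If, say, $k_2=0$, the equation becomes $k_1f_2=1$, so $f_2=1$ and then $m_2=1$, so $\cE_2$ is a unit, again a contradiction. This counting is the only slightly delicate step, and it is short.

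\emph{Item (3).} The case $k=0$ means $\Phi=f=m$, so the skeleton is $\cF_n$. In any factorization $f=m$ forces $f_i=m_i$ for both factors, since $f_i\le m_i$ and the products agree. So factorizations of $\cF_n$ stay inside the $\cF$'s, and Proposition~\ref{prop:elem}(2) applies. Type $(4,0)$ has $k=2$, and Corollary~\ref{cor:small}(1) writes every such skeleton as a product of two orbits.
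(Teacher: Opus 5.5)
Your proof is correct and follows the paper's route. You use parity for (1), with explicit witnesses $\cF_{f_i}\sqcup k_i\,\cO(0)$ where the paper just says ``any two skeletons of the factor types''. For (2) your count $2k_1f_2+2k_2f_1+4k_1k_2=2$ is twice the paper's identity $f_Bk_A+f_Ak_B+2k_Ak_B=1$, and for (3) you rely on Proposition~\ref{prop:elem}(2) and Corollary~\ref{cor:small}(1) as the paper does. The split into $f=0$ and $f\ge1$ in (2) is unnecessary, since the count alone covers both cases. Your check that $f=m$ forces $f_i=m_i$ makes explicit the divisor-closedness that the paper leaves implicit in (3).
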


\begin{proof}
(1) The first statement is Proposition~\ref{prop:parity}. For the second it suffices to take the product of any two skeletons of the factor types.

(2) By Proposition~\ref{prop:general}(1), a product $A\times B$ has $f_Bk_A+f_Ak_B+2k_Ak_B$ orbits. If this equals $1$, then $k_Ak_B=0$, say $k_B=0$, and $f_Bk_A=1$, so $B$ is the one-point skeleton, a unit.

(3) The first statement is Proposition~\ref{prop:elem}(2) and the second is Corollary~\ref{cor:small}(1).
\end{proof}

So decomposable types do contain reducible skeletons, but are there many or few? The following theorem answers that they are exceptional: reducibility requires linear relations with integer coefficients among the ratios, and ``random'' ratios do not satisfy them.

\begin{theorem}[Generic irreducibility]\label{thm:generic}
Let $\cE$ have $k\ge1$ orbits whose ratios are linearly independent over $\mathbb Q$. Then $\cE$ is irreducible, unless $(m,f)=(4,0)$. Moreover, in every type other than $(4,0)$ and $(n,n)$, the reducible skeletons lie in a finite union of proper rational linear subspaces of the space of ratios, and the irreducible ones are generic.
\end{theorem}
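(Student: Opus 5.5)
The plan is to argue by contradiction from a factorization $\cE\cong A\times B$ with $A,B$ non-units, using the explicit list of ratios of a product in Proposition~\ref{prop:general}(1). First note what linear independence over $\mathbb Q$ buys us. The ratios $t_1,\dots,t_k$ are then pairwise distinct and nonzero. Moreover, no nontrivial integer relation $\sum_i n_it_i=0$ can hold among \emph{distinct} orbits of $\cE$, and this is the form in which the hypothesis will be used.

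We split according to the numbers of orbits $k_A,k_B$ of the factors.
\begin{enumerate}[nosep]
\item If $k_A=k_B=0$, then $k=0$, which is excluded.
\item If, say, $k_B=0$, then $B=\cF_n$ with $n\ge2$, because $B$ is not a unit. By Proposition~\ref{prop:general}(1), every ratio of $A$ then occurs $n$ times in $R$. Since $k\ge1$, $R$ has a repeated element, a contradiction.
\item So $k_A,k_B\ge1$. Choose orbits of ratios $a$ in $A$ and $b$ in $B$. The product contains two distinct orbits of ratios $a+b$ and $|a-b|$, and these satisfy $(a+b)\mp|a-b|=2\min(a,b)$ or $2\max(a,b)$. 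Concretely, $2b$ equals $(a+b)-(a-b)$ if $a\ge b$, and $(a+b)+(b-a)$ if $a<b$; the same holds with the roles of $a$ and $b$ exchanged.
\item If $f_A\ge1$, then $b$ itself occurs as a further orbit of $\cE$. This gives the relation $2b=(a+b)\pm|a-b|$ among three distinct orbits, and similarly if $f_B\ge1$. Note that $a=b$ would give the ratio $0$, which is already excluded.
\item If $f_A=f_B=0$ and $k_A=k_B=1$, the type is $(4,0)$, which is excluded.
\item Otherwise $f_A=f_B=0$ and, say, $k_A\ge2$, with a second orbit $a'$. Then $(a+b)\pm|a-b|=2b=(a'+b)\pm|a'-b|$ is a nontrivial relation among four distinct orbits of $\cE$.
\end{enumerate}
In all cases we reach a contradiction, which proves the first statement.

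For the genericity statement we redo the case analysis uniformly, without assuming independence. Fix a type $(m,f)$ other than $(4,0)$ and $(n,n)$, so that $k\ge1$. The argument above shows that if $(t_1,\dots,t_k)$ is the ratio vector of a reducible skeleton of this type, then one of the following holds:
\begin{itemize}[nosep]
\item $t_i=0$ for some $i$;
\item $t_i=t_j$ for some $i\ne j$;
\item a relation of the form $2t_{i_1}=t_{i_2}\pm t_{i_3}$ holds with distinct indices;
\item a relation of the form $t_{i_1}\pm t_{i_2}=t_{i_3}\pm t_{i_4}$ holds with distinct indices.
\end{itemize}
Each of these is a nonzero linear form with integer coefficients in $\{0,\pm1,\pm2\}$. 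In every branch the indices are distinct coordinates, because the corresponding orbits of the product are distinct; when two of the relevant values happen to coincide, we are in the hyperplane $t_i=t_j$ instead. There are only finitely many such forms in $k$ variables, so the reducible ratio vectors lie in a finite union of proper rational hyperplanes. This set is invariant under permutations of the coordinates. Its complement, which consists of irreducible skeletons by the first part, is open, dense and of full measure in $[0,\infty)^k$.

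The step I expect to need the most care is the bookkeeping in this second part. One has to make sure that every branch produces a relation that is \emph{formally} nontrivial, that is, between distinct coordinates, rather than one that degenerates when orbits of the product coincide. The remedy is to route every such coincidence into the diagonal hyperplanes $t_i=t_j$ or into $t_i=0$. The excluded types are exactly the ones where no relation arises: in $(4,0)$ every skeleton is reducible, by Corollary~\ref{cor:small}(1), and in $(n,n)$ there is no space of ratios.
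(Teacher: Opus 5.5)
Your proof is correct, but it takes a different route from the paper in both halves.

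\textbf{First statement.} The paper does not split by the types of the factors. It works directly with the exponents of $\Phi(A)$ and $\Phi(B)$. It uses only that their coefficients are nonnegative and that the nonzero exponents of $\Phi(\cE)$ are the $\pm a_i$, each with coefficient $1$. From this it shows $\Phi(A)=x^u+x^{-u}$ with $f_A=0$, and likewise for $B$. Both factors are then orbits, which forces type $(4,0)$.

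You instead use the orbit-level description of a product (Proposition~\ref{prop:general}(1), Proposition~\ref{prop:orbits}(3)) and split by $(k_A,k_B,f_A,f_B)$. Outside $(4,0)$, each case yields an explicit nonzero integer linear form vanishing on the ratio vector. It is supported on at most four distinct coordinates, with coefficients in $\{0,\pm1,\pm2\}$.

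\textbf{Genericity.} The paper counts dimensions. For fixed factor types, the ratio vector of $A\times B$ is a piecewise linear function, with integer coefficients, of $k_A+k_B<k$ parameters.

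\textbf{Comparison.} Your version gets both halves from a single case analysis, since a $\mathbb Q$-independent vector lies on no rational hyperplane. It also names the hyperplanes explicitly. The paper's version gives sharper information on the reducible locus. For each pair of factor types the locus has dimension at most $k_A+k_B$, hence codimension at least $k-k_A-k_B$, whereas your hyperplanes only give codimension $1$.

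\textbf{Unneeded precautions.} The care taken in your final paragraph is not required. The orbits of $A\times B$ you use are distinct by construction, so every relation is already a nonzero form in distinct coordinates. Neither the hyperplanes $t_i=0$ nor the discussion of $a=b$ is needed.
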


\begin{proof}
Let $\cE\cong A\times B$ with $A,B$ non-units. By linear independence, the ratios $a_1,\dots,a_k$ are positive and distinct, so the nonzero exponents of $\Phi(\cE)=f+\sum_i(x^{a_i}+x^{-a_i})$ are the $\pm a_i$, all with coefficient $1$. Since $\Phi(A)$ and $\Phi(B)$ have nonnegative coefficients, if $x^u$ occurs in $\Phi(A)$ and $x^v$ in $\Phi(B)$, the coefficient of $x^{u+v}$ in $\Phi(\cE)$ is at least the product of the two coefficients.
\begin{enumerate}[nosep,label=(\roman*)]
\item $\Phi(A)$ has an exponent $u>0$. Otherwise, by symmetry, $\Phi(A)=m_A\ge2$, and all the coefficients of $\Phi(\cE)=m_A\Phi(B)$ would be multiples of $m_A$, but $c_{a_1}(\Phi(\cE))=1$. Likewise, $\Phi(B)$ has an exponent $v>0$.
\item $u+v>0$ is an exponent of $\Phi(\cE)$, so $u+v=a_i$ for some $i$. Since $-v$ is an exponent of $\Phi(B)$, $d=u-v$ is also an exponent of $\Phi(\cE)$, so $d\in\{0,\pm a_j\}$. Thus $2u=a_i+d$ and $2v=a_i-d$.
\item $0$ is not an exponent of $\Phi(A)$. If it were, $v$ would be an exponent of $\Phi(\cE)$, $v=a_l$, and $2a_l-a_i+d=0$. The coefficient of $a_l$ in this relation is $2$ minus at most $1$ from $-a_i$ and at most $1$ from $d$; for it to vanish we need $i=l$ and $d=-a_l$, and then $u=(a_i+d)/2=0$, which is absurd. In particular $f_A\le c_0(\Phi(A))=0$; the same holds for $B$.
\item $u$ is the only positive exponent of $\Phi(A)$. If $u'\ne u$ is another one, then $u'+v=a_{i'}$ with $i'\ne i$, $u'-v=d'\in\{0,\pm a_{j'}\}$, and $a_i-d=a_{i'}-d'$. The coefficient of $a_i$ in this relation vanishes only if $d=a_i$, and then $v=0$, or if $d'=-a_i$, and then $d=-a_{i'}$ and $u'=-u<0$. Both are absurd.
\item The coefficient of $x^u$ in $\Phi(A)$ is $1$, because that of $x^{a_i}=x^{u+v}$ in $\Phi(\cE)$ is $1$.
\end{enumerate}
Hence $\Phi(A)=x^u+x^{-u}$, and $A$ is an orbit; likewise, $B$ is an orbit. Then $f=0$ and $m=4$: the type is $(4,0)$, which is excluded.

For the last statement: by Proposition~\ref{prop:general}(1), the vector of ratios of a product of given types is a piecewise linear function, with integer coefficients, of the $k_A+k_B$ ratios of the factors, and $k_A+k_B<k$. Indeed, if $k_A=0$ or $k_B=0$, say $k_A=0$, then $A=\cF_{f_A}$ with $f_A\ge2$ and $k=f_Ak_B>k_B$; otherwise, if $f_A=f_B=0$, then $k=2k_Ak_B>k_A+k_B$ unless $k_A=k_B=1$, and if $f_B\ge1$, then $k\ge k_A+2k_Ak_B>k_A+k_B$. So, up to the finitely many orderings of the coordinates, the reducible skeletons lie in a finite union of proper rational subspaces.
\end{proof}

Table~\ref{tab:types} summarizes the situation for small types.

\begin{table}[ht]
\centering
\small
\begin{tabular}{@{}p{3.3cm}p{6.6cm}p{3.6cm}@{}}
\toprule
Class of types & Types with $m\le12$ & Status\\
\midrule
$k=0$ & $\cF_n$ & irreducible if and only if $n$ is prime\\
$(4,0)$ & $(4,0)$ & always reducible\\
non-decomposable, $k\ge1$ & $m$ prime; $(4,2)$, $(6,4)$, $(8,2)$, $(8,6)$, $(9,5)$, $(9,7)$, & always irreducible\\
 & $(10,4)$, $(10,8)$, $(12,10)$ & \\
decomposable, $k\ge1$, $\ne(4,0)$ & $(6,0)$, $(6,2)$, $(8,0)$, $(8,4)$, $(9,1)$, $(9,3)$, $(10,0)$, $(10,2)$, & generically irreducible;\\
 & $(10,6)$, $(12,0)$, $(12,2)$, $(12,4)$, $(12,6)$, $(12,8)$ & reducible on a null set\\
\bottomrule
\end{tabular}
\caption{Irreducibility by types for $m\le12$ (Proposition~\ref{prop:types} and Theorem~\ref{thm:generic}). Random samples of each decomposable type were checked by exhaustive factorization of $\Phi$ over $\mathbb Z$.}
\label{tab:types}
\end{table}

\subsection{Non-unique factorization}

We now know that factorizations exist and have bounded length. The second half of the question remains: are they unique? They are not, and what is interesting is that uniqueness fails for two reasons of a different nature. One is arithmetic and comes from the ratios: the same multiset can be divided into centred pairs in two ways. The other comes from the fixed points, and it is the failure of cancellation of Theorem~\ref{thm:nocancel}.

\begin{theorem}[Factorization is not unique]\label{thm:nonunique}
\begin{enumerate}[nosep]
\item $\cO(3)\times\cT(2)\cong\cO(1)\times\cT(4)$: both have $f=0$ and ratios $\{1,3,5\}$, the four factors are irreducible and $\cO(3)\not\cong\cO(1)$.
\item $\cO(t)\times\cF_2\cong\cO(t)\times\cO(0)$, with irreducible factors and $\cF_2\not\cong\cO(0)$.
\end{enumerate}
\end{theorem}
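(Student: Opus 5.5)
Both parts are direct computations with the encoding $(\Phi,f)$ of Theorem~\ref{thm:semiring}, followed by irreducibility checks that rest on the prime-size criterion of Proposition~\ref{prop:elem}(1).

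For (1), I compute both sides with Lemma~\ref{lem:centre}(2). $\cT(2)$ has one fixed point and one orbit of ratio $2$, so $\cO(3)\times\cT(2)$ has $f=0$ and ratios $3$ (from the fixed point), together with $3+2=5$ and $|3-2|=1$. Likewise $\cO(1)\times\cT(4)$ has $f=0$ and ratios $1$, $1+4=5$, $|1-4|=3$. Both are therefore $(\Phi,f)=(x^{\pm1}+x^{\pm3}+x^{\pm5},0)$, with the obvious shorthand, and by Theorem~\ref{thm:semiring} the two skeletons are isomorphic in the reduced monoid.

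The four factors have $m=2$ or $m=3$, both prime, so they are irreducible by Proposition~\ref{prop:elem}(1). $\cO(3)\not\cong\cO(1)$ because their ratios differ (Theorem~\ref{thm:class}).

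The only subtle point is that the two factorizations are genuinely distinct up to units and reordering. Units only change the degree, so two factorizations agree only if their multisets of atoms agree in $\Sy^\sharp$. The multisets $\{\cO(3),\cT(2)\}$ and $\{\cO(1),\cT(4)\}$ have different orbit factors, so they differ.

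For (2), the isomorphism $\cO(t)\times\cF_2\cong\cO(t)\times\cO(0)$ is exactly Theorem~\ref{thm:nocancel}: both sides equal $(2x^t+2x^{-t},0)$. The factors $\cO(t)$, $\cO(0)$ and $\cF_2$ all have $m=2$, so they are irreducible by Proposition~\ref{prop:elem}(1). Finally, $\cF_2=(2,2)\ne(2,0)=\cO(0)$ because the number of fixed points differs, so the multisets $\{\cO(t),\cF_2\}$ and $\{\cO(t),\cO(0)\}$ are distinct factorizations.

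No step is a real obstacle. The only thing to be careful about is stating precisely the equivalence of factorizations (up to units, i.e.\ degree, and reordering), so that "distinct" has content.
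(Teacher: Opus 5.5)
Your proposal is correct and follows the paper's proof essentially step for step. You compute both products with Lemma~\ref{lem:centre}(2) and identify them via Theorem~\ref{thm:semiring}, you get irreducibility from the prime sizes $m=2,3$ (Proposition~\ref{prop:elem}(1)), and you obtain part (2) directly from Theorem~\ref{thm:nocancel}. Your check that the two factorizations differ, by comparing their unique $m=2$ atoms $\cO(3)$ and $\cO(1)$, plays the same role as the paper's remark that $\cO(3)\not\cong\cT(4)$.
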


\begin{proof}
(1) By Lemma~\ref{lem:centre}(2), $\cO(3)\times\cT(2)$ has the ratio $3$ (from the fixed point of $\cT(2)$) and the pair $\{5,1\}$, and $\cO(1)\times\cT(4)$ has the ratio $1$ and the pair $\{5,3\}$. Both have $f=0$ and ratios $\{1,3,5\}$, so they are isomorphic by Theorem~\ref{thm:semiring}. The factors have $m=2$ or $m=3$, which are prime, so they are irreducible (Proposition~\ref{prop:elem}(1)), and $\cO(3)\not\cong\cO(1)$ because they have different ratios. Moreover, $\cO(3)\not\cong\cT(4)$, since they have $m=2$ and $m=3$; so the two factorizations do not differ only in the order.

(2) This is Theorem~\ref{thm:nocancel}; the factors have $m=2$.
\end{proof}

The lack of uniqueness could be harmless if all factorizations of each element at least had the same length, that is, if the monoid were half-factorial. This is not the case either, and the smallest possible example already shows it.

\begin{theorem}[The monoid is not half-factorial]\label{thm:halffact}
Let $X$ be the skeleton with $m=4$, $f=2$ and one orbit of ratio $t>0$. Then
\[
X\times\cO(t)\ \cong\ \cO(t/2)\times\cO(t/2)\times\cO(t),
\]
with all factors irreducible. The length set of this element is $\{2,3\}$, so the monoid is not half-factorial. The example has the smallest possible size: two distinct lengths require a factorization of length at least $3$, hence $\Omega(m)\ge3$ by Proposition~\ref{prop:atomic}, hence $m\ge8$.
\end{theorem}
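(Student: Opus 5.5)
Both sides are checked to be equal in $\Sy^\sharp$ by computing their pairs $(\Phi,f)$. On the left, $\Phi(X)=2+x^t+x^{-t}$ with $f=2$. Lemma~\ref{lem:centre}(2), applied with $Q=X$, gives that $X\times\cO(t)$ has $f=0$ and ratios $t,t$ (one for each fixed point of $X$) together with the pair $\{2t,0\}$; so its ratios are $\{0,t,t,2t\}$. On the right, Remark~\ref{rem:splitting} gives $\cO(t/2)\times\cO(t/2)$ with ratios $\{t,0\}$ and $f=0$. Lemma~\ref{lem:centre}(2) again then gives, after multiplying by $\cO(t)$, the pairs $\{2t,0\}$ and $\{t,t\}$, hence ratios $\{0,t,t,2t\}$ with $f=0$. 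Theorem~\ref{thm:semiring} yields the isomorphism. Irreducibility of the factors is immediate: $\cO(t)$ and $\cO(t/2)$ have $m=2$ prime (Proposition~\ref{prop:elem}(1)), and $X$ is irreducible by Proposition~\ref{prop:elem}(3), which covers $m=4$, $f=2$, $k=1$.

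Next I determine the full length set of $E=X\times\cO(t)$, which has $m=8$. By Proposition~\ref{prop:atomic} every length is at most $\Omega(8)=3$, and length $1$ is impossible since $E$ is visibly reducible. Hence the length set is contained in $\{2,3\}$, and both values are realized by the two factorizations above, so it equals $\{2,3\}$. This already shows the monoid is not half-factorial.

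For minimality, suppose some element has two factorizations of different lengths. Then one of them has length $\ell\ge3$, since lengths are $\ge1$, a non-unit cannot have both length $1$ and a longer length, and so the two lengths differ with the larger being at least $3$. Proposition~\ref{prop:atomic} then gives $\Omega(m)\ge\ell\ge3$, and the smallest $m$ with $\Omega(m)\ge3$ is $8$.

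I expect no real obstacle here. The only point needing care is the remark that length $1$ cannot coexist with another length, because an irreducible element has only itself as a factorization; this is what forces the larger length to be at least $3$.
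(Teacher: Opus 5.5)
Your proposal is correct and follows essentially the same route as the paper. You identify both sides through their pair $(\Phi,f)$, get irreducibility of the factors from Proposition~\ref{prop:elem}, and bound the lengths by $\Omega(8)=3$. The only differences are that you read off the ratios $\{0,t,t,2t\}$ via Lemma~\ref{lem:centre}(2) where the paper expands $\Phi=x^{2t}+2x^t+2+2x^{-t}+x^{-2t}$ directly, and that you spell out the minimality argument (a length-$1$ factorization cannot coexist with a longer one), which the paper leaves inside the statement.
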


\begin{proof}
Both sides have $f=0$ and $\Phi=x^{2t}+2x^t+2+2x^{-t}+x^{-2t}$, that is, ratios $\{0,t,t,2t\}$. $X$ is irreducible by Proposition~\ref{prop:elem}(3), a special case of Lemma~\ref{lem:f2irr}, and the orbits by Proposition~\ref{prop:elem}(1). Since $m=8$, Proposition~\ref{prop:atomic} bounds every length by $3$, and the element is reducible; so its length set is $\{2,3\}$.
\end{proof}

\begin{remark}
The two phenomena of non-uniqueness in Theorem~\ref{thm:nonunique} have different origins. The first comes from the fact that $\{1,3,5\}$ contains both a half-sum ($3$) and a half-difference ($1=\tfrac{5-3}2$). The second comes from the fixed points. Theorem~\ref{thm:halffact} is the symmetric, real-exponent version of the result of Campanini and Facchini for $\mathbb N_0[x]$, with an explicit element of minimal size ($m=8$).
\end{remark}

\subsection{Elasticity}

Once lengths can differ, the quantitative question is by how much. The usual measure is the \emph{elasticity} of an element, the quotient of its largest and smallest factorization lengths, and that of the monoid is the supremum of these. We will see that the elasticity of elements is unbounded, that is, the elasticity of the monoid is infinite. The cause is, once again, the failure of cancellation. There are irreducibles with $f=2$ that differ from a product of many orbits only in their fixed points; multiplying both by a factor without fixed points makes that difference disappear, and the same element then has a very short factorization and a very long one. The following lemma provides the irreducibles we need.

\begin{lemma}\label{lem:f2irr}
If $f=2$ and $m$ is a power of $2$, then $\cE$ is irreducible.
\end{lemma}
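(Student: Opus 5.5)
The argument only needs the parity restriction of Proposition~\ref{prop:parity}, exactly as in the proof of Proposition~\ref{prop:elem}(3), which is the case $m=4$. Suppose $\cE\cong\cE_1\times\cE_2$ with $\cE_1,\cE_2$ non-units. By Proposition~\ref{prop:parity} we have $m=m_1m_2$ and $f_1f_2=f=2$, with $m_i\equiv f_i\pmod 2$. The plan is to show that one factor is forced to be a unit.

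Since $f_1f_2=2$ with $f_i\in\mathbb N_0$, the pair $\{f_1,f_2\}$ is $\{1,2\}$; say $f_1=1$. By parity, $m_1$ is odd. But $m_1$ divides $m$, which is a power of $2$, and the only odd divisor of a power of $2$ is $1$. Hence $m_1=1$. By Remark~\ref{rem:units}, $\cE_1$ is then a unit, which contradicts the assumption. So $\cE$ is irreducible.

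None of these steps is delicate. The only point to check is that $f_1f_2=2$ really forces one $f_i$ to equal $1$, and this holds because $2$ is prime. Note that the condition $0\le f_i\le c_0(\Phi(\cE_i))$ is not needed, and the ratios play no role, so the lemma is purely arithmetic on the type $(m,f)$. In the language of Proposition~\ref{prop:types}, the statement says that the types $(2^h,2)$ are non-decomposable.
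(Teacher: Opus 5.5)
Your proof is correct and is essentially the paper's argument: $f_1f_2=2$ forces one factor to have a single fixed point, parity makes its number of sites odd, and the only odd divisor of a power of $2$ is $1$, so that factor is a unit. Your remark that this is the statement that the types $(2^h,2)$ are non-decomposable is also consistent with the paper's Table~\ref{tab:types}.
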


\begin{proof}
A factorization $\cE\cong A\times B$ requires $f_Af_B=2$, so one factor, say $A$, has $f_A=1$. Then $m_A$ is odd by parity and divides $m$, so $m_A=1$ and $A$ is a unit.
\end{proof}

\begin{theorem}[Infinite elasticity]\label{thm:elast}
For $j\ge1$ and $t>0$, let $Y_j$ be the skeleton with
\[
\Phi(Y_j)=\bigl(x^{t/2}+x^{-t/2}\bigr)^{2j},\qquad f(Y_j)=2 .
\]
Then $Y_j$ is irreducible and, for every $s\ge0$,
\[
Y_j\times\cO(s)\ \cong\ \cO(t/2)^{2j}\times\cO(s).
\]
This element has factorizations of lengths $2$ and $2j+1$, and its elasticity is exactly $(2j+1)/2$. In particular, the elasticity of the monoid is infinite.
\end{theorem}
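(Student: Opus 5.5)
First I will check that $Y_j$ is a genuine skeleton. Expanding by the binomial theorem, $\Phi(Y_j)=\sum_{i=0}^{2j}\binom{2j}{i}x^{(i-j)t}$ is symmetric and nonzero, with constant term $\binom{2j}{j}$. We need $2\le\binom{2j}{j}$ and $\binom{2j}{j}$ even. Both hold for $j\ge1$, since $\binom{2j}{j}=2\binom{2j-1}{j-1}$. So $(\Phi(Y_j),2)\in\Sy^\sharp$ and, by Theorem~\ref{thm:semiring}, $Y_j$ exists. Its number of sites is $m=\Phi(Y_j)(1)=2^{2j}$, a power of $2$, and $f=2$, so Lemma~\ref{lem:f2irr} gives irreducibility directly.

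For the isomorphism I will use Theorem~\ref{thm:nocancel}. The pair of $\cO(t/2)^{2j}$ is $\bigl((x^{t/2}+x^{-t/2})^{2j},0\bigr)$, and that of $Y_j$ has the same first component and $f=2$. Multiplying both by $\cO(s)=(x^s+x^{-s},0)$, which has no fixed points, kills the difference: both products equal $\bigl((x^{t/2}+x^{-t/2})^{2j}(x^s+x^{-s}),0\bigr)$. Theorem~\ref{thm:semiring} then turns this equality of pairs into an isomorphism. All factors are irreducible: $Y_j$ as just shown, and the orbits by Proposition~\ref{prop:elem}(1). This gives factorizations of lengths $2$ and $2j+1$.

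It remains to show that these lengths are the extremes. The element has $m=2^{2j}\cdot2=2^{2j+1}$, so Proposition~\ref{prop:atomic} bounds every length by $\Omega(2^{2j+1})=2j+1$. The maximum is therefore attained. For the minimum, the element is reducible, being a product of two non-units, so it is not an atom, and every factorization has length at least $2$. Hence the largest length is $2j+1$, the smallest is $2$, and the elasticity of the element is $(2j+1)/2$. Letting $j\to\infty$ shows the elasticity of the monoid is infinite.

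I expect no real obstacle. The one point needing care is the membership $(\Phi(Y_j),2)\in\Sy^\sharp$, namely the parity of the central binomial coefficient, since it is what makes $Y_j$ exist at all. The rest is assembling Lemma~\ref{lem:f2irr}, Theorem~\ref{thm:nocancel} and the $\Omega(m)$ bound.
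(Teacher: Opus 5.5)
Your proposal is correct and follows essentially the same route as the paper: you check that $(\Phi(Y_j),2)\in\Sy^\sharp$, get irreducibility from Lemma~\ref{lem:f2irr}, obtain the isomorphism because multiplying by the fixed-point-free $\cO(s)$ makes the two pairs equal, and bound lengths by $2\le\ell\le\Omega(2^{2j+1})=2j+1$. The only cosmetic difference is that you get the evenness of $\binom{2j}{j}$ from $\binom{2j}{j}=2\binom{2j-1}{j-1}$, whereas the paper uses $P(1)\equiv c_0(P)\pmod 2$ with $P(1)=4^j$.
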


\begin{proof}
$Y_j$ is a skeleton: $c_0=\binom{2j}{j}\ge2$, and it is even because, for a symmetric element, $P(1)\equiv c_0(P)$ modulo $2$ (the non-constant coefficients come in pairs), and here $P(1)=m=4^j$; so $(\Phi,2)\in\Sy^\sharp$. Its size is $m=4^j$, so it is irreducible by Lemma~\ref{lem:f2irr}.

Both sides of the isomorphism have $f=0$ and the same $\Phi$, because $\Phi(\cO(t/2))^{2j}=\Phi(Y_j)$; by Theorem~\ref{thm:semiring} they are isomorphic. The left-hand side is a factorization of length $2$, and the right-hand side one of length $2j+1$, with all factors irreducible.

The size is $m=2^{2j+1}$, so every length is at most $2j+1$ by Proposition~\ref{prop:atomic}, and at least $2$ because the element is reducible. Its elasticity is therefore $(2j+1)/2$, which is unbounded in $j$.
\end{proof}

\begin{remark}
For $j=1$ and $s=t$, this is Theorem~\ref{thm:halffact}, with $Y_1=X$. In the product, the irreducible $Y_j$ and the reducible $\cO(t/2)^{2j}$ give the same result because they differ only in the number of fixed points and $\cO(s)$ has no fixed points. Thus the unbounded elasticity comes from the component $f$, not from the coefficients of $\Phi$.
\end{remark}

The choice $f=2$ in the theorem is not the only one that works. Several skeletons correspond to the polynomial $\Phi(Y_j)$, one for each value of $f$ allowed by the conditions of $\Sy^\sharp$ ($0\le f\le c_0$, with the same parity as $c_0$). The one with the largest $f$ is irreducible too, although for a different and more algebraic reason.

\begin{remark}[The largest value of $f$]
The value $f=2$ is the smallest positive one allowed by parity. The largest is $f=c_0=\binom{2j}{j}$, and the corresponding skeleton is irreducible too.

Indeed, suppose $(\Phi(Y_j),\binom{2j}{j})=(P,f_A)(Q,f_B)$ with non-unit factors. By the formula in the proof of Lemma~\ref{lem:closed},
\[
\tbinom{2j}{j}=f_Af_B\le c_0(P)\,c_0(Q)\le c_0(PQ)=\tbinom{2j}{j},
\]
so $P$ and $Q$ have constant terms and no positive exponent in common. Every exponent of $P$ is then an exponent of $PQ$, hence lies in $t\mathbb Z$, and the same holds for $Q$. With $w=x^t$, both lie in the factorial ring $\mathbb Z[w^{\pm1}]$, where $PQ=w^{-j}(1+w)^{2j}$ and $1+w$ is prime. Therefore $P=w^{-i/2}(1+w)^{i}$ with $i$ even by symmetry, and $2\le i\le 2j-2$ because neither factor is a unit. Thus $P$ and $Q$ both contain the exponent $t$, which is a contradiction.

For $s=t/2$, the element $Y_j\times\cO(t/2)\cong\cO(t/2)^{2j+1}$ does not depend on the chosen value of $f$, and an exhaustive computation shows that its length set is the full interval $\{2,3,\dots,2j+1\}$ for $j\le3$.
\end{remark}

\subsection{\texorpdfstring{The submonoid $f=1$}{The submonoid f=1}}

The two phenomena of non-uniqueness in Theorem~\ref{thm:nonunique} suggest a natural question: if we remove the fixed-point mechanism, is half-factoriality at least recovered? The right place to ask it is among the skeletons with a single fixed point. There the number of fixed points cannot be erased, because all the factors also have one fixed point, and the component $f$ no longer plays a role.

\begin{proposition}\label{prop:f1}
The skeletons with $f=1$ form a divisor-closed submonoid: if $\cE\cong A\times B$ and $f(\cE)=1$, then $f_A=f_B=1$. It is isomorphic to the multiplicative monoid of the $P\in\Sy$ with $P(1)$ odd, and in it the component $f$ plays no role.
\end{proposition}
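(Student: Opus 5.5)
The plan has three parts: divisor-closedness from the multiplicativity of $f$, then the identification with the monoid of symmetric $P$ with $P(1)$ odd via the projection $(P,f)\mapsto P$, and finally the observation that $f$ is constant on the submonoid.

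\emph{Divisor-closedness.} By Proposition~\ref{prop:mult}, $f$ is multiplicative with values in $\mathbb N_0$. So $f_Af_B=1$ forces $f_A=f_B=1$. Closure under products is immediate because $1\cdot1=1$. The identity $(1,1)$ has $f=1$, so the skeletons with $f=1$ form a submonoid, and by the first remark it is divisor-closed.

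\emph{The isomorphism.} I would use the projection $(P,1)\mapsto P$. Its image lies in $\{P\in\Sy:\ P(1)\text{ odd}\}$, since for a skeleton $P(1)=m\equiv f=1$ modulo $2$ (Proposition~\ref{prop:parity}).

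For surjectivity, take $P\in\Sy$ with $P(1)$ odd. As noted in the proof of Theorem~\ref{thm:elast}, the non-constant coefficients of a symmetric element come in equal pairs, so $c_0(P)\equiv P(1)$ is odd. Hence $c_0(P)\ge1$, and $(P,1)$ satisfies $0\le1\le c_0(P)$ with the right parity, so $(P,1)\in\Sy^\sharp$.

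For injectivity, the second coordinate is constantly $1$. The projection is multiplicative, since $(P,1)(Q,1)=(PQ,1)$. The target is a monoid: products of elements with odd value at $1$ have odd value at $1$, and $PQ\in\Sy$ by Lemma~\ref{lem:closed}.

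\emph{The role of $f$.} On this submonoid $f\equiv1$, so it carries no information. Consequently the cancellation failure of Theorem~\ref{thm:nocancel}, which requires a factor with $h=0$, cannot occur here. Since $\ZR$ is a domain, the submonoid is cancellative.

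I expect no real obstacle. The only point needing care is checking that $c_0(P)\ge1$ for surjectivity, which follows from the parity argument above.
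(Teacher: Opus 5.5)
Your proposal is correct and follows the same route as the paper: you get divisor-closedness from the multiplicativity of $f$, and you use the projection $(P,1)\mapsto P$ onto the symmetric $P$ with $P(1)$ odd. You also use $c_0(P)\equiv P(1)$ modulo $2$ to check that $(P,1)\in\Sy^\sharp$, and your remark that the submonoid is cancellative (by Theorem~\ref{thm:nocancel}, since $h=1\ne0$) is a correct addition that the paper leaves implicit.
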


\begin{proof}
$f_Af_B=1$ forces $f_A=f_B=1$. In this submonoid, $(P,1)\mapsto P$ is injective, and its image consists of the $P$ with $c_0(P)\ge1$ odd, that is, with $P(1)=m$ odd.
\end{proof}

So the mechanism of Theorem~\ref{thm:elast} does not act in the submonoid $f=1$. Even so, the answer to the question is negative: the splitting of the ratios is enough on its own to produce different lengths.

\begin{theorem}[The submonoid $f=1$ is not half-factorial]\label{thm:f1}
Write $\cT(a_1,\dots,a_j)$ for the skeleton with one fixed point and orbits of ratios $a_1,\dots,a_j$. Then
\[
\cT(3)\times\cT(2)\times\cT(2,6)\ \cong\ \cT(4)\times\cT(3)\times\cT(2,4)\ \cong\ \cT(1,3)\times\cT(2,3,4,8),
\]
with all factors irreducible. This element has $m=45$ and length set $\{2,3\}$, so the submonoid $f=1$ is not half-factorial.
\end{theorem}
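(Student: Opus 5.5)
The plan is to check the three isomorphisms by computing multisets of ratios, to check irreducibility of every factor from the number of sites, and to get the length set from Proposition~\ref{prop:atomic}.

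\emph{Step 1: the isomorphisms.} All the factors have one fixed point, so every product has $f=1$ (Proposition~\ref{prop:mult}). By Theorem~\ref{thm:semiring} it then suffices to show that the three products have the same multiset of ratios. I would compute these with Proposition~\ref{prop:general}(1), multiplying two factors at a time. For $A,B$ with $f_A=f_B=1$, the ratios of $A\times B$ are:
\begin{itemize}[nosep]
\item the ratios of $A$;
\item the ratios of $B$;
\item the pairs $\{a_p+b_q,\,|a_p-b_q|\}$.
\end{itemize}
For example, $\cT(3)\times\cT(2)=\cT(1,2,3,5)$, $\cT(4)\times\cT(3)=\cT(1,3,4,7)$ and $\cT(1,3)\times\cT(2,3,4,8)$ each yield a $\cT$ with $22$ orbits. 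Multiplying the first two by $\cT(2,6)$ and $\cT(2,4)$ respectively, I would check that all three multisets coincide. This is routine bookkeeping, best organized by comparing coefficients of $\Phi$ as symmetric polynomials in $x$. In each case $m=3\cdot3\cdot5=45=3\cdot15$.

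\emph{Step 2: irreducibility of the factors.} The factors $\cT(3)$, $\cT(2)$ and $\cT(4)$ have $m=3$, and $\cT(2,6)$, $\cT(2,4)$ and $\cT(1,3)$ have $m=5$. These are primes, so Proposition~\ref{prop:elem}(1) applies. The only factor needing an argument is $\cT(2,3,4,8)$, of type $(9,1)$, which is decomposable.

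By Proposition~\ref{prop:f1}, a factorization must be $\cT(a)\times\cT(b)$ with both factors of type $(3,1)$. Its ratios are then $\{a,b,a+b,|a-b|\}$ by Proposition~\ref{prop:general}(1), so $a$, $b$ and $a+b$ all lie in $\{2,3,4,8\}$ with the right multiplicities. No two elements of this multiset sum to another element. The candidate $a=b=4$ would need $4$ twice and would also produce a ratio $0$. Hence $\cT(2,3,4,8)$ is irreducible. Alternatively, Proposition~\ref{prop:general}(2) gives the same conclusion by a finite search.

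\emph{Step 3: the length set.} Since $m=45$, Proposition~\ref{prop:atomic} bounds every factorization length by $\Omega(45)=3$. The element is reducible, so every length is at least $2$. Both $2$ and $3$ are realized by Steps 1--2, so the length set is exactly $\{2,3\}$. By Proposition~\ref{prop:f1} the submonoid $f=1$ is divisor-closed, so these factorizations are factorizations inside it, and the submonoid is not half-factorial.

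The main obstacle is the bookkeeping in Step 1: the product with $22$ orbits must agree exactly across the three factorizations. I would carry it out by expanding $\Phi$ and listing the coefficients $c_t$ for $t=0,\dots,15$.
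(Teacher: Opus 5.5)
Your proposal is correct and follows essentially the same route as the paper: you compute the ratios of the three products with Proposition~\ref{prop:general}(1), which all equal $\{0,1,1,1,1,2,2,3,3,3,3,4,4,5,5,5,6,7,7,8,9,11\}$ (you should write this multiset out rather than leave it as routine), you get irreducibility of the factors with $m\in\{3,5\}$ from primality, you handle $\cT(2,3,4,8)$ by reducing to $\cT(a)\times\cT(b)$ with ratios $\{a,b,a+b,|a-b|\}$, and you read the length set off from $\Omega(45)=3$. One cosmetic slip: for the third factorization $m=5\cdot 9$, not $3\cdot 15$.
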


\begin{proof}
The three products have $f=1$. By Proposition~\ref{prop:general}(1), $\cT(3)\times\cT(2)\cong\cT(1,2,3,5)$ and $\cT(4)\times\cT(3)\cong\cT(1,3,4,7)$; a second application shows that the three products have the same ratios,
\[
\{0,1,1,1,1,2,2,3,3,3,3,4,4,5,5,5,6,7,7,8,9,11\}.
\]
The factors with $m=3$ or $m=5$ are irreducible because $m$ is prime. $\cT(2,3,4,8)$ has $m=9$, so in a factorization $A\times B$ into non-units, $m_Am_B=9$ and $f_Af_B=1$ give $m=3$ and $f=1$ in each factor, that is, a fixed point and an orbit: $A=\cT(a)$ and $B=\cT(b)$, with ratios $\{a,b,a+b,|a-b|\}$; but there are no two elements of $\{2,3,4,8\}$ whose sum and difference both lie in the set. Lengths are at most $\Omega(45)=3$. (An exhaustive search, by Proposition~\ref{prop:general}(2), also confirms that these are all the factorizations.)
\end{proof}

\begin{remark}
Here the different lengths come only from the coefficients of $\Phi$, through the splitting of the exponents, and not from the fixed points. Whether the elasticity of the submonoid $f=1$ is finite remains open.
\end{remark}

In summary, we have seen that the monoid of skeletons is atomic with bounded lengths, that irreducibility is decided by the centre lemma and is the generic situation, and that uniqueness fails both through the fixed points and through the arithmetic of the ratios. In Section~\ref{subsec:ruin} we study a family in which, by contrast, the arithmetic is very rigid: the progression skeletons.

\section{Progression skeletons}\label{subsec:ruin}

In Section~\ref{sec:fact} we described a monoid in which factorization fails in almost every possible way: it is not unique, lengths can differ, and elasticity is unbounded. In this section we study a family in which the opposite happens. The progression skeletons of Example~\ref{ex:ruin}, among them the gambler's ruin skeleton, have a rigid arithmetic: their factorizations can be listed one by one, they all have the same length, and their number is given by a closed formula. The reason is that the factorization of these skeletons is the same as that of number systems.

To see this, recall that $\Phi(\cP_n(\beta))=y^{-\frac{n-1}2}[n]_y$, with $y=x^\beta$ and $[n]_y=1+y+\dots+y^{n-1}$. Writing a number $j\in\{0,\dots,n-1\}$ in a mixed radix, for instance $j=j_1+rj_2$ with $0\le j_1<r$ and $0\le j_2<s$ when $n=rs$, amounts to the polynomial identity
\[
[rs]_y=[r]_y\,[s]_{y^r},
\]
because each power $y^j$ appears exactly once on the right, as $y^{j_1}\cdot y^{rj_2}$. This identity, transferred to skeletons, is point~(1) of the following theorem. That \emph{all} factorizations come from it is classical for polynomials with coefficients in $\{0,1\}$: the correspondence between the factorizations of $1+y+\dots+y^{n-1}$ into such polynomials and the ordered factorizations of $n$ goes back to Krasner and Ranulac \cite{KrasnerRanulac}; see also \cite{deBruijn1956,CarlitzMoser1966}. Point~(3) of the theorem shows that no assumption on the coefficients of the factors is needed, and that the result holds in $\Sy^\sharp$, with its component $f$.

\begin{theorem}[Factorizations of progression skeletons]\label{thm:ruin}
Let $\cP_n(\beta)$ be as in Example~\ref{ex:ruin}.
\begin{enumerate}[nosep]
\item For $r,s\ge1$, $\cP_{rs}(\beta)\cong\cP_r(\beta)\times\cP_s(r\beta)$ in the reduced monoid, via the mixed-radix bijection $j=j_1+rj_2$.
\item $\cP_n(\beta)$ is irreducible if and only if $n$ is prime.
\item The factorizations of $\cP_n(\beta)$ into irreducibles are exactly
\[
\cP_{p_1}(\beta)\times\cP_{p_2}(p_1\beta)\times\dots\times\cP_{p_\ell}(p_1\cdots p_{\ell-1}\beta),
\]
one for each ordering $(p_1,\dots,p_\ell)$, with $\ell=\Omega(n)$, of the prime factors of $n$ counted with multiplicity; distinct orderings give distinct factorizations. In particular, every factorization has length $\Omega(n)$, and the number of factorizations is the multinomial coefficient $\Omega(n)!/\prod_pe_p!$, where $n=\prod_pp^{e_p}$.
\end{enumerate}
\end{theorem}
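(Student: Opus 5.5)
The plan is to work throughout in $\Sy^\sharp$ via Theorem~\ref{thm:semiring}: two classes coincide as soon as their pairs $(\Phi,f)$ do. The whole argument then reduces to statements about the polynomial $y^{-\frac{n-1}2}[n]_y$ together with a parity bookkeeping for $f$.

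\emph{Point (1).} I will prove the identity
\[
y^{-\frac{rs-1}2}[rs]_y=y^{-\frac{r-1}2}[r]_y\cdot (y^r)^{-\frac{s-1}2}[s]_{y^r}.
\]
It follows from the mixed-radix identity $[rs]_y=[r]_y[s]_{y^r}$ and the exponent check $\frac{r-1}2+\frac{r(s-1)}2=\frac{rs-1}2$. I also need $(y^r)=x^{r\beta}$, which is why the second factor is $\cP_s(r\beta)$. For the $f$-component, $f(\cP_n)=1$ exactly when $n$ is odd, and $rs$ is odd exactly when $r$ and $s$ both are, so $f(\cP_{rs})=f(\cP_r)f(\cP_s)$.

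\emph{Point (2).} If $n$ is prime, $m=n$ is prime and Proposition~\ref{prop:elem}(1) applies. If $n=rs$ with $r,s\ge2$, point~(1) gives a factorization into two non-units.

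\emph{Point (3).} First, the displayed products are factorizations into irreducibles: iterate (1) and use (2). Second, distinct orderings give distinct factorizations. The factor with step $\beta$ is $\cP_{p_1}(\beta)$, and its smallest positive exponent $\beta/2$ (or $\beta$ when $p_1$ is odd) together with its size determines $p_1$. Inductively, the multiset of pairs (size, step) determines the ordering, since the steps $p_1\cdots p_{i-1}\beta$ are strictly increasing. The multinomial count and the length $\Omega(n)$ then follow.

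The real content, and the main obstacle, is the converse: every factorization into irreducibles has this form. I will argue by induction on $n$, so it suffices to show the following. If $\cP_n(\beta)\cong A\times B$ with $A$ irreducible, then $A\cong\cP_p(\beta')$ for a prime $p$, with either $\beta'=\beta$ and $B\cong\cP_{n/p}(p\beta)$, or $A$ appearing later in some ordering. The steps are as follows.

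\begin{enumerate}[nosep]
\item \emph{Coefficients.} All coefficients of $\Phi(A)\Phi(B)$ equal $1$, and $c_u(A)c_v(B)\le c_{u+v}(AB)$. Hence $\Phi(A)$ and $\Phi(B)$ have coefficients in $\{0,1\}$.
\item \emph{Exponents.} Fix an exponent $v_0$ of $\Phi(B)$. Every exponent of $\Phi(A)$ lies in $-v_0+\beta(\mathbb Z+\tfrac{n-1}2)$, so the exponents of $\Phi(A)$ differ by elements of $\beta\mathbb Z$, and likewise for $B$. After shifting by monomials, both factors become $0/1$ polynomials in $y$ with constant term $1$, whose product is $[n]_y$.
\item \emph{Classical input.} Now I invoke Krasner–Ranulac and de~Bruijn \cite{KrasnerRanulac,deBruijn1956}. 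Every factorization of $[n]_y$ into $0/1$ polynomials refines to a product $[p_1]_y[p_2]_{y^{p_1}}\cdots$ over an ordering of the primes of $n$, and each factor is a product of a sub-collection of these. Irreducibility of $A$ in $\Sy^\sharp$ forces $\Phi(A)$, up to the monomial shift, to be a single $[p]_{y^d}$; otherwise $A$ splits further by (1) after recentring.
\item \emph{Symmetry and $f$.} Symmetry of $\Phi(A)$ fixes the shift to the centred one, so $\Phi(A)=\Phi(\cP_p(d\beta))$. Since $c_0(\Phi(A))\in\{0,1\}$, the value $f_A=c_0$ is forced, and $A\cong\cP_p(d\beta)$.
\end{enumerate}

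The delicate point is the third step: extracting from the classical tiling description that an irreducible factor is a single centred $[p]_{y^d}$. If the literature only gives the statement for complete factorizations, I will instead apply it directly to the full factorization $A_1\times\cdots\times A_\ell$ after steps 1–2 for each factor. Steps 1 and 2 extend to several factors, since all coefficients of the total product are $1$.
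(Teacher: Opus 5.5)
Your proposal is correct, and Steps 1, 2 and 4 of your converse match the paper's proof. In both, the coefficients are forced into $\{0,1\}$, the exponents lie in a single coset of $\beta\mathbb Z$, and $f=c_0$ is forced because $c_0\in\{0,1\}$. The difference is Step 3.

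\emph{Your route.} You import the full classical description of $0/1$ factorizations of $[n]_y$ (Krasner--Ranulac, de~Bruijn's complementing pairs). You then use symmetry and irreducibility in $\Sy^\sharp$ to conclude that every irreducible factor is a single centred $\cP_p(d\beta)$.

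\emph{The paper's route.} It does not use that classification. It proves only the one-block statement of Lemma~\ref{lem:debruijn}, applied to the factor containing the exponent at distance $\beta$ from its smallest one. This shows that this factor is $\cP_r(\beta)\times\cG'$ with a symmetric $0/1$ cofactor. Irreducibility then gives $\cG_i\cong\cP_r(\beta)$, cancellation in the domain $\ZR$ gives $\prod_{l\ne i}\cG_l\cong\cP_{n/r}(r\beta)$, and induction on $n$ finishes.

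\emph{Comparison.} Your route is shorter and isolates exactly what is specific to $\Sy^\sharp$: no assumption on coefficients, symmetry fixing the shift, and the component $f$. But it rests on quoting the classical theorem in the precise form you need. That form is: in any two-factor $0/1$ decomposition of $[n]_y$, each factor is a product of a subfamily of $[p_1]_y[p_2]_{y^{p_1}}\cdots$ for one ordering of the primes. You should state and cite exactly this, not just the correspondence with ordered factorizations of $n$. The paper's route is self-contained and needs only the weaker lemma, which it proves in the text.

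\emph{One point to tidy.} Your alternative ``or $A$ appearing later in some ordering'' is not a usable inductive hypothesis. Once every irreducible factor is known to be some $\cP_{p_i}(d_i\beta)$, finish as the paper does. Exactly one factor has $d_i=1$, because the coefficient of the exponent next to the smallest is $1$. After cancelling that factor, the remaining product is $\cP_{n/p_i}(p_i\beta)$, to which induction applies.
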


Before the proof, let us look at an example. For $n=6$ there are two orderings of the prime factors, $(2,3)$ and $(3,2)$, and therefore two factorizations:
\[
\cP_6(\beta)\cong\cP_2(\beta)\times\cP_3(2\beta)\cong\cP_3(\beta)\times\cP_2(3\beta).
\]
In the first, each $j\in\{0,\dots,5\}$ is written $j=j_1+2j_2$, with $j_1\in\{0,1\}$ and $j_2\in\{0,1,2\}$: the factor $\cP_2(\beta)$, with step $\beta$, gives the digit $j_1$, and the factor $\cP_3(2\beta)$, with step $2\beta$, gives the digit $j_2$, which moves $j$ forward two at a time. In the second, $j=j_1+3j_2$, with $j_1\in\{0,1,2\}$ and $j_2\in\{0,1\}$: $\cP_3(\beta)$ gives $j_1$ and $\cP_2(3\beta)$ moves $j$ forward three at a time. It is like writing a number with units and tens, but with a different base for each digit. Both factorizations have length $2=\Omega(6)$.

The proof of (1) is a direct check with the mixed radix. The proof of (3) is the deep part: one has to show that there are no other factorizations. The idea is that all the coefficients of $\Phi(\cP_n(\beta))$ are $0$ or $1$, so any factorization splits the set $\{0,\dots,n-1\}$ into a direct sum of sets of exponents. A classical theorem of de~Bruijn on such direct sums forces one of the factors to be a multiple of a block of consecutive digits $\{0,\dots,r-1\}$, that is, of a progression skeleton; by irreducibility it is that skeleton, and the rest follows by induction.

The classical lemma on which the proof rests is the following. We include its proof to keep the paper self-contained.

\begin{lemma}[de~Bruijn \cite{deBruijn1956}; see also \cite{CarlitzMoser1966}]\label{lem:debruijn}
Let $I,J\subset\mathbb N_0$ with $0\in I\cap J$, such that every $x\in\{0,\dots,n-1\}$ can be written uniquely as $a+b$ with $a\in I$ and $b\in J$, and all these sums are less than $n$. Let $r$ be the smallest positive integer not in $I$ ($r=n$ if $I=\{0,\dots,n-1\}$). Then every element of $J$ is a multiple of $r$, $I$ is a union of complete blocks $\{rq,\dots,rq+r-1\}$, and $r\mid n$.
\end{lemma}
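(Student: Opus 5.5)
I would prove the lemma by strong induction on the elements of $\{0,\dots,n-1\}$, showing that the decomposition $x=a+b$ ($a\in I$, $b\in J$) always has the shape $a=rq'+\rho$ and $b=rq''$ with $0\le\rho<r$. First I would pin down small elements. Since $0\in J$, every $a\in\{0,\dots,r-1\}$ lies in $I$ and has decomposition $a+0$. By uniqueness, no other decomposition of these numbers exists. Hence $J\cap\{1,\dots,r-1\}=\emptyset$, because $b=b+0$ with $b\in J$ and $0\in I$ would give a second decomposition of $b<r$ unless $b$ itself were the one in $I$. In fact the cleaner statement is: $J\cap\{1,\dots,r-1\}$ is empty, since for $0<b<r$ both $(b,0)$ and $(0,b)$ would be decompositions. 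Next, $r\notin I$ and $r$ must be represented (if $r<n$), so $r=a+b$ with $b\neq 0$. Then $b\ge r$ by the previous step, which forces $b=r$ and $a=0$, so $r\in J$.

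The main step is the induction. I would claim that $I$ is a union of blocks $B_q=\{rq,\dots,rq+r-1\}$ and that $J\subset r\mathbb N_0$, checking this block by block in increasing order. Suppose it holds for all elements below $rq$, meaning that $I\cap[0,rq)$ is a union of complete blocks and $J\cap[0,rq)\subset r\mathbb Z$. Consider $x=rq$. I distinguish two cases:
\begin{enumerate}[nosep]
\item $rq\in I$. Then I must show the whole block $B_q\subset I$ and that $J$ contains no element in $(rq,rq+r)$.
\item $rq\notin I$. Then $rq=a+b$ with $b\ne0$. Any $a<rq$ in $I$ lies in a full block, and $b$ is then determined. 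I would show that $b\equiv0\pmod r$, so that $a$ is a multiple of $r$.
\end{enumerate}
The tool in both cases is the same. If some element $y\in B_q$ had a decomposition not of the block form (an element of $J$ not divisible by $r$, or a partial block in $I$), I would shift by a small $\rho<r$. Since $\{0,\dots,r-1\}\subset I$, the sums $\rho+b$ for $b\in J$ cover residues, and a misaligned element would yield two decompositions of some number below $rq+r$. I expect this bookkeeping, which shows that a partial block in $I$ or a non-multiple in $J$ creates a collision with $\{0,\dots,r-1\}+J$, to be the main obstacle. I would handle it by taking the least counterexample $y$ and comparing $y$ with $y-\rho$ for suitable $\rho$, using minimality.

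Finally, $r\mid n$. The set $\{0,\dots,n-1\}$ is exactly $I+J$, which by the above is a union of complete blocks $B_q$: each $a$ lies in a full block of $I$, and adding a multiple of $r$ moves blocks to blocks. Also, all sums are less than $n$, so the last block is complete. Hence $n$ is a multiple of $r$. The case $r=n$, where $I=\{0,\dots,n-1\}$ and $J=\{0\}$, is trivial.
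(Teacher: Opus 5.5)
Your preliminary facts ($\{0,\dots,r-1\}\subset I$, $J\cap\{1,\dots,r-1\}=\emptyset$, and $r\in J$ when $r<n$) are sound and match the paper. So are your block-by-block induction with the case split on whether $rq\in I$, and your final deduction of $r\mid n$. The gap is the inductive step itself, which is the entire content of the lemma. You flag it as the main obstacle and offer only a heuristic, and that heuristic is wrong in its specifics, because a misaligned element need not create a collision below $rq+r$. Take $r=2$ with $I\cap\{0,\dots,5\}=\{0,1,4\}$ and $J\cap\{0,\dots,5\}=\{0,2,5\}$. The block structure holds below $rq=4$, $4\in I$, and $5\in J$ is misaligned. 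Yet each of $0,\dots,5$ has exactly one representation, and the first collision is $1+5=4+2=6=rq+r$. So comparing $y$ with $y-\rho$ cannot work. The idea you need is an upward shift: if $rq\in I$ and $rq+s\in J$ with $1\le s<r$, then $(r-s)+(rq+s)$ and $rq+r$ are two representations of $rq+r$. This number is a legitimate sum, hence $<n$, because $r\in J$.

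The other parts of the step are also missing.
\begin{itemize}
\item In the case $rq\in I$ you still have to show $rq+s\in I$. In its representation $a+b$ one has $b<rq$, by the previous point and $I\cap J=\{0\}$, so $b=rp$ by induction. If $p\ge1$, then $a=r(q-p)+s$ lies in a complete earlier block, hence $r(q-p)\in I$, and $rq=r(q-p)+rp=rq+0$ has two representations.
\item In the case $rq\notin I$, the congruence $b\equiv0\pmod r$ that you set out to prove is immediate from the induction hypothesis. What actually has to be shown is that the rest of the block $B_q$ misses both $I$ and $J$. This holds because $a=rq-b$ is a multiple of $r$ in $I$, so $a+s\in I$ by completeness of its block, and $rq+s=(a+s)+b$ is already represented.
\end{itemize}
With these three arguments inserted, your outline becomes the paper's proof; without them, the lemma is not proved.
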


\begin{proof}
If $x\ne0$ were in $I\cap J$, $x=x+0=0+x$ would be two representations; so $I\cap J=\{0\}$. If $r<n$, then $r\in J$: in $r=a+b$ we have $b\ne0$ because $r\notin I$, and $b<r$ would give $b\in I\cap J$.

We prove by induction on $x<n$ that (i) if $x\in J$, then $r\mid x$, and (ii) $x\in I$ if and only if $r\lfloor x/r\rfloor\in I$. Let $x=rq+s$ with $0\le s<r$. If $s=0$ there is nothing to prove, and if $q=0$, then $x\in I$ and $x\notin J$. Let $q,s\ge1$.
\begin{itemize}[nosep]
\item If $rq\in I$: $x\notin J$, since otherwise $(r-s)+x$ and $rq+r$ would be two representations of the same number. Write $x=a+b$; by induction $b=rp$, and if $p\ge1$, (ii) applied to $a=r(q-p)+s$ gives $r(q-p)\in I$, and $rq=r(q-p)+rp=rq+0$ would have two representations. So $b=0$ and $x\in I$.
\item If $rq\notin I$: $rq=a+b$ with $b\ne0$; by induction $b=rp$ with $p\ge1$, $a=r(q-p)\in I$, and by (ii) $y=r(q-p)+s\in I$. Then $x=y+rp$, and $x\notin I$ (otherwise $x+0$ would be another representation) and $x\notin J$ (otherwise $0+x$ would be).
\end{itemize}
The first statement follows from (i) and the second from (ii); the block fits in $\{0,\dots,n-1\}$ because $rq+r$ is a sum (with $r\in J$) or because $r=n$. Finally, write $n-1=a+b$ with $b=rp$; since the block of $a$ is complete, $r\lfloor a/r\rfloor+r-1+b$ is a sum greater than or equal to $n-1$, hence equal to $n-1$, and $n=r(\lfloor a/r\rfloor+p+1)$.
\end{proof}

With de Bruijn's lemma we can now prove the theorem.

\begin{proof}[Proof of Theorem~\ref{thm:ruin}]
(1) Write $j=j_1+rj_2$ with $0\le j_1<r$ and $0\le j_2<s$. Since $\tfrac{r-1}2+r\tfrac{s-1}2=\tfrac{rs-1}2$,
\[
\Bigl(j-\tfrac{rs-1}2\Bigr)\beta=\Bigl(j_1-\tfrac{r-1}2\Bigr)\beta+\Bigl(j_2-\tfrac{s-1}2\Bigr)r\beta,
\]
and $rs-1-j=(r-1-j_1)+r(s-1-j_2)$. The bijection therefore preserves the involutions and the values $\log r_z$; hence it preserves $\Phi$ and $f$, and by Theorem~\ref{thm:semiring} the two skeletons are isomorphic up to the degree, which is invisible in the reduced monoid.

(2) If $n$ is prime, $\cP_n(\beta)$ is irreducible by Proposition~\ref{prop:elem}(1). If $n=1$, it is a unit, and if $n=rs$ with $r,s\ge2$, item (1) writes it as a product of two elements with $m=r$ and $m=s$, which are not units.

(3) Let $\cP_n(\beta)\cong\cG_1\times\dots\times\cG_\ell$ with irreducible factors, and let $P_i=\Phi(\cG_i)$. All the coefficients of $\prod_iP_i=\Phi(\cP_n(\beta))$ are $0$ or $1$, and all the coefficients of the $P_i$ are nonnegative.

\emph{Step 1: the factors also have coefficients $0$ or $1$.} Fix $i$ and let $Q_i=\prod_{l\ne i}P_l$, with largest exponent $q_+$. The coefficient of $x^{a+q_+}$ in $P_iQ_i$ is at least that of $x^a$ in $P_i$; so each $P_i$ has coefficients in $\{0,1\}$. Moreover, differences of exponents of $P_i$, and also those of $Q_i$, are differences of exponents of the product, and therefore lie in $\beta\mathbb Z$.

\emph{Step 2: a direct sum.} For fixed $i$, since $P_iQ_i=\Phi(\cP_n(\beta))$ has $n$ terms with coefficient $1$, the exponents of $P_i$ and of $Q_i$ are $g_i+u\beta$ and $h_i+v\beta$, with $g_i$ and $h_i$ their smallest exponents, $u\in I_i$, $v\in J_i$, $0\in I_i\cap J_i$ and $I_i\oplus J_i=\{0,\dots,n-1\}$, with each element represented exactly once. Some $P_i$ contains the exponent $g_i+\beta$: if none did, every factor would have its second exponent at distance at least $2\beta$ from the smallest, and so would the product; but $\Phi(\cP_n(\beta))$ contains the exponent next to the smallest, at distance $\beta$. We fix this $i$ and put $I=I_i$ and $J=J_i$, so that $1\in I$.

\emph{Step 3: de~Bruijn's lemma.} Let $r$ be the smallest positive integer not in $I$; since $1\in I$, $r\ge2$. By Lemma~\ref{lem:debruijn}, $I$ is a union of complete blocks $\{rq,\dots,rq+r-1\}$ and $r\mid n$. In words: the factor containing the exponent $g_i+\beta$ begins with a complete block of digits $0,\dots,r-1$.

\emph{Step 4: induction.} Grouping by blocks, $P_i=\Phi(\cP_r(\beta))\,P'$, where $P'$ has coefficients $0$ or $1$ and is symmetric, because $P_i$ and $\Phi(\cP_r(\beta))$ are and $\ZR$ is a domain. Since the fixed points are determined by $c_0$ (next paragraph), $\cG_i\cong\cP_r(\beta)\times\cG'$. Irreducibility forces $\cG'$ to be a unit, so $\cG_i\cong\cP_r(\beta)$, and $r$ is prime by (2). By (1), $\Phi(\cP_n(\beta))=\Phi(\cP_r(\beta))\,\Phi(\cP_{n/r}(r\beta))$; cancelling $\Phi(\cP_r(\beta))$ in the domain $\ZR$, the product of the other factors has the $\Phi$ of $\cP_{n/r}(r\beta)$ and, since its coefficients are $0$ or $1$, also the same number of fixed points. So $\prod_{l\ne i}\cG_l\cong\cP_{n/r}(r\beta)$ in the reduced monoid, and induction on $n$ gives the stated form.

Fixed points play no role: for a symmetric element with coefficients in $\{0,1\}$ we have $c_0\in\{0,1\}$, and the conditions $f\le c_0$, $f\equiv c_0$ force $f=c_0$. Conversely, (1) produces each factorization in the list. Distinct orderings give distinct multisets of factors, because $\cP_p(\gamma)$ determines $p$ (the number of sites) and $\gamma$ (its largest exponent is $\tfrac{p-1}2\gamma$), and the scales $p_1\cdots p_{i-1}\beta$ increase strictly, so that the factors, ordered by scale, recover the ordering.
\end{proof}

In the light of this family, the theorem allows us to reread several results of Section~\ref{sec:fact}.

\begin{remark}
Theorem~\ref{thm:nonunique}(1) is the case $n=6$, $\beta=2$: $\cO(1)\times\cT(4)=\cP_2(2)\times\cP_3(4)$ and $\cO(3)\times\cT(2)=\cP_3(2)\times\cP_2(6)$ are the two orderings of $6=2\cdot3$. The progression skeleton is therefore an element all of whose factorizations have the same length, inside a monoid of infinite elasticity (Theorem~\ref{thm:elast}). The counts in (3) have been confirmed by exhaustive factorization for all $n\le30$.
\end{remark}

In summary, in the progression family non-uniqueness is completely described: it comes only from the freedom to order the digits of the mixed radix, and the length of factorizations is an invariant, $\Omega(n)$. In Section~\ref{sec:burnside} we will see that this family is also the one that connects the theory with its probabilistic origin: the progression skeleton is the skeleton of the gambler's ruin (Remark~\ref{rem:ruin}).

\section{The Burnside viewpoint and the dynamical origin}\label{sec:burnside}

In this section we first prove that skeletons of fixed degree are exactly the finite $C_2$-sets over the monoid $(\mathbb R,+)$, so that Sections~\ref{sec:setting} and~\ref{sec:cancel} are, to a large extent, the Burnside theory of $C_2$ carried over to this setting. This makes clear what is known and what is new: the additive and ring structures follow the classical model, while the multiplicative arithmetic of Section~\ref{sec:fact} has, as far as we know, no precedent. Second, we recall that skeletons are the combinatorial part of spectral duality structures, and we show that two numbers that are not multiplicative, the total number of orbits and the number of free orbits, carry their dynamical information. We compute how they behave under the product and explain what they mean.

In this section, in accordance with the convention of Section~\ref{sec:setting}, ``orbit'' refers to all $C_2$-orbits, and we say ``free orbit'' when we want to exclude the fixed points.

\subsection{Skeletons as \texorpdfstring{$C_2$}{C2}-sets}\label{subsec:burnside}

Theorem~\ref{thm:semiring}, which identifies classes of skeletons with triples $(K,\Phi,f)$, has a classical interpretation. A skeleton is, at bottom, a finite set with an involution, that is, with an action of the group $C_2$, in which each site carries a real number, $\log r_z$, that changes sign under the involution. This is exactly a $C_2$-set over the real line.

Let $C_2=\{1,\tau\}$ act on $\mathbb R$ by $\tau t=-t$. Then $(\mathbb R,+)$ is a $C_2$-monoid, that is, a monoid on which $C_2$ acts by automorphisms. A \emph{finite $C_2$-set over $\mathbb R$} is a finite $C_2$-set $X$, possibly empty, with an equivariant map $\ell:X\to\mathbb R$; morphisms are equivariant maps compatible with the maps to $\mathbb R$. These objects are added by disjoint union and multiplied through the monoid: $X\times Y$ carries the diagonal action and $\ell(x,y)=\ell(x)+\ell(y)$, which is again equivariant.

\begin{proposition}[Skeletons are $C_2$-sets over $\mathbb R$]\label{prop:burnside}
For each $K>0$, the assignment $(Z,\sigma,\kappa)\mapsto(Z,\sigma,\ell)$, with $\ell(z)=\log r_z$, is an isomorphism from the category of skeletons of degree $K$, including the empty one $\emptyset_K$, with the morphisms of \cite[Definition~5.1]{PaperVI}, onto the category of finite $C_2$-sets over $\mathbb R$. It carries the disjoint union to the disjoint union, and the product of skeletons of degrees $K_1,K_2$ to the product of $C_2$-sets over $\mathbb R$.
\end{proposition}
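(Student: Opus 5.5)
The plan is to build the functor and its inverse explicitly, and then check that both are compatible with morphisms and with the two operations. Fix $K>0$. Given a skeleton $(Z,\sigma,\kappa)$ of degree $K$, set $\ell(z)=\log r_z=\tfrac12\log\bigl(\kappa(\sigma z)/\kappa(z)\bigr)$. Since $r_{\sigma z}=r_z^{-1}$, we get $\ell(\sigma z)=-\ell(z)$, so $\ell$ is equivariant for the action $\tau t=-t$. Conversely, given a finite $C_2$-set $(X,\sigma,\ell)$ with equivariant $\ell$, put $\kappa(z)=\sqrt K\,e^{-\ell(z)}$, as in the proof of Theorem~\ref{thm:class}. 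Then $\kappa(z)\kappa(\sigma z)=K\,e^{-\ell(z)-\ell(\sigma z)}=K$, and $\kappa$ recovers $\ell$ through $r_z$. The identity $\kappa(z)=\sqrt K\,r_z^{-1}$, valid for every skeleton of degree $K$, shows that these two assignments are mutually inverse on objects. The empty skeleton $\emptyset_K$ corresponds to the empty $C_2$-set. Note that fixed points have $\ell=0$ automatically by equivariance, which matches $r_z=1$.

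Next come the morphisms. I will recall \cite[Definition~5.1]{PaperVI}, which I expect defines a morphism of skeletons of the same degree as a map commuting with the involutions and preserving weights, $\kappa'\circ\varphi=\kappa$. Under the correspondence, preserving $\kappa$ is equivalent to preserving $\ell$, because $\kappa$ and $\ell$ determine each other through the fixed constant $\sqrt K$ and the bijection $t\mapsto\sqrt K e^{-t}$. So the morphisms of skeletons are exactly the equivariant maps compatible with the maps to $\mathbb R$. Functoriality, meaning that identities and compositions are preserved, is immediate since the underlying maps are unchanged. This gives an isomorphism of categories, not merely an equivalence.

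For the operations, the disjoint union is clear, because $\ell$ is defined sitewise. For the product, $r_{(z,w)}=r_zr_w$ \cite[Proposition~4.2]{PaperVI} gives $\ell(z,w)=\ell(z)+\ell(w)$. The involution $\sigma_1\times\sigma_2$ is the diagonal action. Hence the product skeleton, of degree $K_1K_2$, is sent to the product $C_2$-set over $\mathbb R$.

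The main obstacle is the morphism clause, since the statement depends on the exact form of \cite[Definition~5.1]{PaperVI}. If that definition allowed weights to be rescaled, or used something more general than weight-preserving equivariant maps, I would have to match it with the corresponding notion on the $C_2$ side. In the weight-preserving case the verification above is complete, and I would state explicitly that the equivalence $\kappa\leftrightarrow\ell$ at fixed $K$ is the only input needed.
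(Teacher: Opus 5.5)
Your proposal is correct and follows the paper's proof essentially step by step. The same ingredients appear there: equivariance of $\ell$ from $r_{\sigma z}=r_z^{-1}$, the inverse $\kappa=\sqrt K\,e^{-\ell}$, the fact that an involution-compatible map preserves $\kappa$ if and only if it preserves $\ell$, and $r_{(z,w)}=r_zr_w$ for the product. The paper's proof also uses exactly the weight-preserving, involution-compatible notion of morphism, so your hedge about the definition of morphism resolves the way you expected.
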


\begin{proof}
The involution $\sigma$ is a $C_2$-action, and $\ell$ is equivariant because $r_{\sigma z}=r_z^{-1}$. Conversely, $\kappa$ is recovered as $\kappa(z)=\sqrt K\,e^{-\ell(z)}$ \cite[Proposition~3.1]{PaperVI}, which satisfies $\kappa(z)\kappa(\sigma z)=K$ for every equivariant $\ell$. A map compatible with the involutions preserves $\kappa$ if and only if it preserves $\ell$, by the same formula. The product statement is $r_{(z,w)}=r_zr_w$, and the union statement is clear.
\end{proof}

In Burnside theory, finite $G$-sets are studied through their \emph{marks}: how many points each subgroup fixes. Here we must also keep track of the point of $\mathbb R$ over which each site lies. For a subgroup $H\le C_2$ and a point $a$ of $\mathbb R$ fixed by $H$, the mark $\varphi_{H,a}(X)$ counts the points of $X$ fixed by $H$ and lying over $a$. The marks for $H=1$, collected as $\sum_a\varphi_{1,a}(X)\,x^a$, give $\Phi$. Since $\mathbb R^{C_2}=\{0\}$, the only mark for $H=C_2$ is $\varphi_{C_2,0}=f$. The invariants of Section~\ref{sec:setting} are thus exactly the marks, and Theorem~\ref{thm:semiring} is the injectivity of the marks on isomorphism classes, together with a description of their image.

The conditions defining $\Sy^\sharp$ and the phenomena of Section~\ref{sec:cancel} also have their classical counterparts. The parity condition $f\equiv c_0(\Phi)$ (mod~$2$) is the counterpart, over the point $0$, of the congruence $\varphi_1\equiv\varphi_{C_2}$ (mod~$2$) that characterizes the image of the mark homomorphism of the Burnside ring $A(C_2)$ \cite{Dress1969,tomDieck1979}. Likewise, the failure of cancellation in Theorem~\ref{thm:nocancel} and the zero divisors of Theorem~\ref{thm:ring} are the graded form of the classical relation $[C_2]\cdot\bigl([C_2]-2\bigr)=0$ in $A(C_2)$ \cite{Solomon1967}. Indeed, over the point $0$, $\cF_1$ is the orbit $C_2/C_2$, $\cO(0)$ is the free orbit $C_2$, and $\cF_2-\cO(0)=2-[C_2]$.

The general framework in which this study fits is that of crossed Burnside rings. Oda and Yoshida \cite{OdaYoshida} call a finite $G$-set with a $G$-map, its \emph{weight function}, into a finite $G$-monoid $S$ a \emph{crossed $G$-set over $S$}; and they call the Grothendieck ring of crossed $G$-sets, with respect to disjoint union and tensor product, the \emph{crossed Burnside ring} of $(G,S)$. These rings also arise from the Dress construction applied to the Burnside functor \cite{Bouc2003}. In this language, a skeleton of fixed degree is a crossed $C_2$-set over the $C_2$-monoid $(\mathbb R,+)$, with weight function $\ell=\log r$, and the semiring of Section~\ref{sec:setting} is an analogue, for an infinite monoid, of the crossed Burnside semiring.

The scope of this comparison should be made precise. The constructions of Oda and Yoshida and of Dress, made for a finite monoid, are the model for this viewpoint, not a source of results. Since each object involves only finitely many values, the fact that the monoid is infinite causes no difficulty, and the facts we need are proved directly. Sections~\ref{sec:setting} and~\ref{sec:cancel} thus rest on known facts: from Burnside rings and, for the multiplicative group, from Pólya's theorem. By factorization we mean factorization into irreducibles in the multiplicative monoid of classes, in the sense of \cite{GHK}. We are not aware of studies of this aspect for semirings of Burnside type: their literature deals with the Burnside ring (its prime spectrum, its idempotents and its units), and not with the arithmetic of the semiring.

\subsection{Orbit counting}

The marks, $\Phi$ and $f$, are multiplicative, and this is why they have been enough to study factorization. But there are two numbers attached to a skeleton that are not marks, although they are computed from them: the number $\nu$ of $C_2$-orbits, fixed points included, and the number $k$ of free orbits, which are the orbits of the previous sections. These two numbers are not multiplicative, and they are precisely the ones that carry the dynamical content of the structures, as we will see in the next subsection. It is therefore useful to know how to compute them for a product.

\begin{proposition}[Orbit counting]\label{prop:orbits}
\begin{enumerate}[nosep]
\item For every skeleton, $\nu=(m+f)/2$ and $k=(m-f)/2$.
\item For a product $\cE=\cE_1\times\cE_2$,
\[
\nu=\nu_1\nu_2+k_1k_2=\frac{m_1m_2+f_1f_2}2,\qquad
k=2k_1k_2+k_1f_2+f_1k_2=\frac{m_1m_2-f_1f_2}2 .
\]
\item The product of two orbits is a union of two orbits if both are free, and a single orbit otherwise.
\end{enumerate}
\end{proposition}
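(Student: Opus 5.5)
The plan is to obtain all three items from the relation $m=2k+f$ of Section~\ref{sec:setting} and the multiplicativity of $m$ and $f$ (Proposition~\ref{prop:mult}).

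For (1), the $C_2$-orbits are the $f$ fixed points and the $k$ free orbits, so $\nu=f+k$. Combining this with $m=f+2k$ gives $k=(m-f)/2$ and $\nu=f+(m-f)/2=(m+f)/2$. Both are integers because $m\equiv f$ modulo $2$ (Proposition~\ref{prop:parity}).

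For (2), I will apply (1) to the product. Using $m=m_1m_2$ and $f=f_1f_2$, this gives $\nu=(m_1m_2+f_1f_2)/2$ and $k=(m_1m_2-f_1f_2)/2$. To get the forms in terms of the factors, substitute $m_i=f_i+2k_i$. Then
\[
m_1m_2-f_1f_2=4k_1k_2+2k_1f_2+2f_1k_2,
\]
which gives $k=2k_1k_2+k_1f_2+f_1k_2$. This also agrees with the orbit count in Proposition~\ref{prop:general}(1). For $\nu$, write $\nu_i=f_i+k_i$ and expand:
\[
\nu_1\nu_2+k_1k_2=f_1f_2+f_1k_2+k_1f_2+2k_1k_2=f+k=\nu .
\]

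For (3), I will split the product of two $C_2$-orbits $O_1\times O_2$ by size. It is a $C_2$-set with $|O_1||O_2|$ points, and its fixed points are the pairs of fixed points.
\begin{itemize}
\item If both orbits are free, the product has $4$ points and no fixed point, so it is two free orbits. This matches the splitting of Remark~\ref{rem:splitting}.
\item If one orbit is a fixed point and the other is free, the product has $2$ points and no fixed point, so it is one free orbit.
\item If both are fixed points, the product is a single fixed point.
\end{itemize}
Summing (3) over all pairs of orbits gives a second derivation of $\nu=\nu_1\nu_2+k_1k_2$: each pair contributes one orbit, and each of the $k_1k_2$ pairs of free orbits contributes one extra.

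There is no real obstacle here. The only point needing care is keeping the convention of this section, where ``orbit'' includes fixed points and $k$ counts only the free ones, consistent throughout.
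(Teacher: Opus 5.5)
Your proof is correct and follows the paper's. Item (1) is $\nu=k+f$ combined with $m=2k+f$, and item (2) is (1) plus the multiplicativity of $m$ and $f$, which is one of the two routes the paper offers. For item (3), the only (cosmetic) difference is that you count the points and fixed points of $O_1\times O_2$, whereas the paper writes out the two free orbits $\{(z,w),(\sigma z,\sigma w)\}$ and $\{(z,\sigma w),(\sigma z,w)\}$ explicitly.
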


\begin{proof}
(1) This is the orbit-counting lemma of Cauchy, Frobenius and Burnside, $\nu=\frac12\bigl(|X|+|X^{C_2}|\bigr)$, together with $\nu=k+f$; in terms of the marks $m$ and $f$, it simply says that $m=2k+f$.

(3) If $O_1=\{z,\sigma z\}$ and $O_2=\{w,\sigma w\}$ are free, $O_1\times O_2$ consists of the two free orbits $\{(z,w),(\sigma z,\sigma w)\}$ and $\{(z,\sigma w),(\sigma z,w)\}$. If one of them is a fixed point, the product is a single orbit, free unless both are fixed.

(2) This follows from (3) by counting, or from (1) and the multiplicativity of $m$ and $f$.
\end{proof}

Point (3) is the combinatorial version of the splitting of orbits in Remark~\ref{rem:splitting}: two free orbits give two free orbits, whose ratios are the sum and the difference.

\subsection{The dynamical origin}

We now recall where skeletons come from and what the numbers $\nu$ and $k$ mean \cite{PaperI,PaperV,PaperVI,PaperVII}. The paper does not depend on this interpretation, but it explains why these structures are worth studying.

As recalled in the Introduction, spectral duality structures explain why certain initial distributions make the absorption probabilities of a process with resetting independent of the resetting mechanism, with the universal value $C^\ast=1/(1+\sqrt K)$ \cite{PaperI,PaperV}. A complete SDS consists of a skeleton and spectral amplitudes; the skeleton is its combinatorial part.

The distributions that produce this independence have a simple description in terms of the skeleton. A distribution $\pi$ on $Z$ is \emph{neutral} if $\pi_z=r_z\,\pi_{\sigma z}$ for all $z$. Since $r_z>0$, on each orbit $\pi$ either vanishes at both sites or is positive at both, and in the latter case $\pi_z/\pi_{\sigma z}=r_z$. For a complete SDS, the neutral distributions are exactly those that do not depend on the spectral parameter, and they give the value $C^\ast$. Each orbit contributes an elementary neutral distribution: for each orbit $O$, let $e_O$ be the neutral distribution supported on $O$, that is, $e_O=\delta_z$ at a fixed point, and $e_O\propto r_z^{1/2}\delta_z+r_z^{-1/2}\delta_{\sigma z}$ on a free orbit. The neutral distributions are the mixtures of these: they form a simplex $\overline\Sigma$, the \emph{separatrix}, with vertices the $e_O$ and dimension $\nu-1$. On the other hand, the \emph{response rank} of a complete SDS with linearly independent spectral functions is $k$, the number of free orbits \cite[Corollary~8.9]{PaperVI}. Thus $\nu$ measures how many neutral distributions there are, and $k$ how many independent directions of response the structure has.

Among all neutral distributions there is one that stands out naturally, and it connects this interpretation with the characters of Section~\ref{sec:cancel}.

\begin{proposition}[The canonical neutral distribution]\label{prop:canonical}
Among the distributions $\pi_z\propto r_z^{\lambda}$, exactly one is neutral, the one with $\lambda=\tfrac12$ (if $r_z=1$ for every $z$, all values of $\lambda$ give this same distribution, the uniform one):
\[
\pi^{\mathrm c}_z=\frac{r_z^{1/2}}{\chi_{1/2}(\cE)}\ \propto\ \kappa(z)^{-1/2}.
\]
Its normalizing constant is the character $\chi_{1/2}$, and $\pi^{\mathrm c}(\cE_1\times\cE_2)=\pi^{\mathrm c}(\cE_1)\otimes\pi^{\mathrm c}(\cE_2)$. In terms of the vertices, $\pi^{\mathrm c}\propto\sum_O\tilde e_O$, where $\tilde e_O=\delta_z$ at a fixed point and $\tilde e_O=r_z^{1/2}\delta_z+r_z^{-1/2}\delta_{\sigma z}$ on a free orbit.
\end{proposition}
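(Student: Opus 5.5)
The plan is to verify the claims one at a time, working orbit by orbit, since the neutrality condition $\pi_z=r_z\pi_{\sigma z}$ only couples the two sites of an orbit.

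\emph{Step 1: uniqueness of $\lambda$.} Take $\pi_z\propto r_z^{\lambda}$. At a fixed point, $r_z=1$ and $\sigma z=z$, so the condition holds trivially. On a free orbit, $r_{\sigma z}=r_z^{-1}$, and neutrality reads $r_z^{\lambda}=r_z\cdot r_z^{-\lambda}$, that is, $r_z^{2\lambda-1}=1$. If some orbit has $r_z\ne1$, this forces $\lambda=\tfrac12$. Conversely, $\lambda=\tfrac12$ satisfies it on every orbit. If all $r_z=1$, every $\lambda$ gives the uniform distribution, which is neutral.

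\emph{Step 2: normalization and weights.} The normalizing constant is $\sum_z r_z^{1/2}=\chi_{1/2}(\cE)$ by the definition in Proposition~\ref{prop:chi}. For the weights, Theorem~\ref{thm:class} gives $\kappa(z)=\sqrt K\,r_z^{-1}$, so $r_z^{1/2}=K^{1/4}\kappa(z)^{-1/2}$, which proves the proportionality.

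\emph{Step 3: behaviour under the product.} We use $r_{(z,w)}=r_zr_w$ from the definition of the product. It gives $r_{(z,w)}^{1/2}=r_z^{1/2}r_w^{1/2}$. The denominator factors by the multiplicativity of $\chi_{1/2}$ (Proposition~\ref{prop:chi}). Together these yield the tensor identity.

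\emph{Step 4: decomposition into vertices.} Sum $\tilde e_O$ over the orbits. A fixed point contributes $\delta_z=r_z^{1/2}\delta_z$. On a free orbit, $r_z^{-1/2}=r_{\sigma z}^{1/2}$, so $\tilde e_O=r_z^{1/2}\delta_z+r_{\sigma z}^{1/2}\delta_{\sigma z}$. Since the orbits partition $Z$, we get $\sum_O\tilde e_O=\sum_z r_z^{1/2}\delta_z$.

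\emph{Expected obstacle.} There is no real obstacle; the only delicate point is the degenerate case in Step 1. When all ratios vanish there is no uniqueness of $\lambda$, only of the resulting distribution, so the statement should be read as uniqueness of the neutral distribution within the family. I would state this explicitly.
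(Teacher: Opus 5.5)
Your proposal is correct and follows the paper's own proof: $r_z^{2\lambda}=r_z$ forces $\lambda=\tfrac12$ when some $r_z\ne1$, the formula $\kappa(z)=\sqrt K\,r_z^{-1}$ gives the proportionality, and $r_{(z,w)}=r_zr_w$ together with the multiplicativity of $\chi_{1/2}$ gives the tensor identity. Your Step~4, which checks $\sum_O\tilde e_O=\sum_z r_z^{1/2}\delta_z$ via $r_z^{-1/2}=r_{\sigma z}^{1/2}$, is correct; the paper leaves this detail implicit.
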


\begin{proof}
The quotient $r_z^\lambda/r_{\sigma z}^{\lambda}=r_z^{2\lambda}$ equals $r_z$ for all $z$ if and only if $\lambda=\tfrac12$ (when some $r_z\ne1$; otherwise every $\lambda$ works and all choices coincide). Since $\kappa(z)=\sqrt K/r_z$, we have $r_z^{1/2}\propto\kappa(z)^{-1/2}$. The statement on the tensor product follows from $r_{(z,w)}=r_zr_w$ and the multiplicativity of $\chi_{1/2}$.
\end{proof}

For the progression skeleton, $\pi^{\mathrm c}_j\propto e^{(j-\frac{n-1}2)\beta/2}$; for the gambler's ruin, it is a neutral initial distribution, so the ruin probability equals $C^\ast$ at every resetting rate (Remark~\ref{rem:ruin}).

We can now answer the natural question of what happens to the dynamical information when two skeletons are multiplied: how the universal value, the separatrix and the response rank change, and which part of the separatrix of the product is obtained from those of the factors.

\begin{proposition}[Dynamical invariants of a product]\label{prop:dyn}
Let $\cE=\cE_1\times\cE_2$.
\begin{enumerate}[nosep]
\item $\sqrt K=\sqrt{K_1}\sqrt{K_2}$ is multiplicative, so $C^\ast(\cE)=1/(1+\sqrt{K_1K_2})$.
\item $\dim\overline\Sigma(\cE)=\nu_1\nu_2+k_1k_2-1$, and every complete SDS on $\cE$ with linearly independent spectral functions has response rank $2k_1k_2+k_1f_2+f_1k_2$.
\item If $\pi_1$ and $\pi_2$ are neutral, then $\pi_1\otimes\pi_2$ is neutral for $\cE$. The image of $\overline\Sigma(\cE_1)\times\overline\Sigma(\cE_2)$ under $(\pi_1,\pi_2)\mapsto\pi_1\otimes\pi_2$ has codimension
\[
(\nu_1-1)(\nu_2-1)+k_1k_2
\]
in $\overline\Sigma(\cE)$ (the image is not linear, and by codimension we mean the difference of the dimensions). It is therefore all of $\overline\Sigma(\cE)$ exactly when one factor is a unit, or the factors are an orbit $\cO(t)$ and a set of fixed points $\cF_n$.
\end{enumerate}
\end{proposition}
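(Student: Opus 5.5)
Parts (1) and (2) are bookkeeping; part (3) is the only real content, so I would do the parts in order.

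For (1), the degree of a product is $K_1K_2$ by Theorem~\ref{thm:semiring}. Since $\sqrt{\cdot}$ is multiplicative on $\mathbb R_{>0}$, the formula for $C^\ast$ follows by substitution.

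For (2), the separatrix is a simplex whose vertices are the $e_O$, so its dimension is $\nu-1$. Proposition~\ref{prop:orbits}(2) gives $\nu=\nu_1\nu_2+k_1k_2$ for the product. Likewise, the response rank equals $k$ by \cite[Corollary~8.9]{PaperVI}, and Proposition~\ref{prop:orbits}(2) gives $k=2k_1k_2+k_1f_2+f_1k_2$.

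For (3), I would first check neutrality of $\pi_1\otimes\pi_2$ directly from $r_{(z,w)}=r_zr_w$ and $\sigma(z,w)=(\sigma z,\sigma w)$:
\[
(\pi_1\otimes\pi_2)_{(z,w)}=\pi_{1,z}\pi_{2,w}=r_zr_w\,\pi_{1,\sigma z}\pi_{2,\sigma w}=r_{(z,w)}(\pi_1\otimes\pi_2)_{\sigma(z,w)} .
\]
For the codimension, I would work in the coordinates given by the weights on the vertices $e_O$, so that $\overline\Sigma$ is the standard simplex of dimension $\nu-1$. I would then compute the dimension of the image of $(\pi_1,\pi_2)\mapsto\pi_1\otimes\pi_2$.

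The key observation is that $e_{O_1}\otimes e_{O_2}$ is a neutral distribution supported on $O_1\times O_2$. By Proposition~\ref{prop:orbits}(3), if one of the two orbits is a fixed point, $O_1\times O_2$ is a single orbit, and $e_{O_1}\otimes e_{O_2}$ is exactly the vertex $e_{O_1\times O_2}$. If both orbits are free, $O_1\times O_2$ splits into two orbits $O',O''$. In that case $e_{O_1}\otimes e_{O_2}$ is one fixed mixture of $e_{O'}$ and $e_{O''}$, with weights depending only on the ratios, and not an arbitrary mixture.

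Hence the image is the Segre-type image of a product of simplices: $\sum_{O_1,O_2}\alpha_{O_1}\beta_{O_2}\,e_{O_1}\otimes e_{O_2}$. Its dimension is $(\nu_1-1)+(\nu_2-1)$, because the bilinear map is injective up to scaling on products of probability vectors, a rank-one-matrix argument.

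Subtracting from $\nu_1\nu_2+k_1k_2-1$ gives
\[
\nu_1\nu_2+k_1k_2-1-(\nu_1+\nu_2-2)=(\nu_1-1)(\nu_2-1)+k_1k_2 .
\]

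For the equality case, the codimension vanishes iff $(\nu_1-1)(\nu_2-1)=0$ and $k_1k_2=0$. The first condition says that one factor has a single $C_2$-orbit, so it is a unit ($\cF_1$) or an orbit $\cO(t)$. The second says that if it is $\cO(t)$, the other factor has $k_2=0$, that is, it is some $\cF_n$.

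The main obstacle is the dimension count for the image. Injectivity up to scaling needs the family $e_{O_1}\otimes e_{O_2}$ to be linearly independent as $(O_1,O_2)$ ranges, including the pairs of free orbits that land in a common two-dimensional face. I would handle this by noting that different pairs $(O_1,O_2)$ have disjoint supports $O_1\times O_2$ in $Z_1\times Z_2$. The distributions $e_{O_1}\otimes e_{O_2}$ are therefore independent, and the map is the restriction of the Segre embedding, whose image has dimension $(\nu_1-1)+(\nu_2-1)$.
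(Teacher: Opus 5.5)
Your treatment of (1), (2), the neutrality of $\pi_1\otimes\pi_2$ and the codimension count is correct and follows the paper's proof. The one difference is how you get injectivity of $(\pi_1,\pi_2)\mapsto\pi_1\otimes\pi_2$. The paper just notes that $\pi_1$ and $\pi_2$ are the marginals of $\pi_1\otimes\pi_2$. That works directly on distributions over $Z_1$ and $Z_2$, so your ``main obstacle'' (linear independence of the $e_{O_1}\otimes e_{O_2}$ via disjoint supports, followed by a Segre argument in vertex coordinates) is unnecessary. Your route is nonetheless valid.

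One step is missing at the end. The statement asserts that the image \emph{equals} $\overline\Sigma(\cE)$ in the two listed cases, but you only show that the codimension is $0$ there. The image is not linear, so full dimension does not give equality: a smaller simplex sitting inside $\overline\Sigma(\cE)$ also has codimension $0$. The dimension count only proves the ``only if'' direction, and the paper checks surjectivity separately.

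Your own observation closes the gap. When $\nu_1=1$ and $k_1k_2=0$, every pair $(O_1,O_2)$ contains a fixed point, so $e_{O_1}\otimes e_{O_2}=e_{O_1\times O_2}$ is a vertex. The map $\pi_2\mapsto e_{O_1}\otimes\pi_2$ is affine and sends the vertices of $\overline\Sigma(\cE_2)$ bijectively onto the vertices of $\overline\Sigma(\cE)$, so it is onto the whole simplex.
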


\begin{proof}
(1) This is the multiplicativity of the degree.

(2) This is Proposition~\ref{prop:orbits}(2) together with the facts recalled above; the response rank depends only on the skeleton.

(3) Since $r_{(z,w)}=r_zr_w$ and $\sigma(z,w)=(\sigma z,\sigma w)$, the tensor product of neutral distributions is neutral. More precisely, $e_{O_1}\otimes e_{O_2}$ lies in the face of $\overline\Sigma(\cE)$ spanned by the orbits of $O_1\times O_2$ (Proposition~\ref{prop:orbits}(3)). The image of the tensor map has dimension $(\nu_1-1)+(\nu_2-1)$, since $\pi_1$ and $\pi_2$ are recovered as the marginals of $\pi_1\otimes\pi_2$, and
\[
(\nu_1\nu_2+k_1k_2-1)-(\nu_1-1)-(\nu_2-1)=(\nu_1-1)(\nu_2-1)+k_1k_2 .
\]
Both summands are nonnegative, so the codimension vanishes exactly when $\nu_1=1$ or $\nu_2=1$, and moreover $k_1k_2=0$. A skeleton with $\nu=1$ is a unit ($k=0$) or a single orbit $\cO(t)$ ($k=1$); in the second case, $k_1k_2=0$ forces the other factor to have $k=0$, that is, to be some $\cF_n$. In both cases the image is indeed the whole simplex: if one factor is a unit this is immediate, and for $\cO(t)\times\cF_n$ the map $\pi_2\mapsto e_O\otimes\pi_2$ is linear and carries the vertices $\delta_w$ of $\overline\Sigma(\cF_n)$ to the vertices $e_{O\times\{w\}}$ of $\overline\Sigma(\cE)$.
\end{proof}

Point (3) says that, in general, the product has many more neutral distributions than those obtained by combining the ones of the factors: the dynamics of the product does not reduce to that of its pieces. This is also the reason why the paper works with skeletons and not with complete structures, as the following remark explains.

\begin{remark}[Why skeletons and not complete structures]\label{rem:complete}
A complete SDS adds to the skeleton spectral amplitudes $A_j$, $j=1,\dots,N$, satisfying the full rank condition (B1) and the duality (B2) of \cite{PaperVII}. For the product with a second structure, with amplitudes $A'_{j'}$, $j'=1,\dots,N'$, the amplitudes $A_{(j,j')}(z,w)=A_j(z)A'_{j'}(w)$ again satisfy both conditions: (B1), because Kronecker products of two linearly independent families are linearly independent, and (B2), because the weights multiply. This extends the observation in \cite[Remark~4.6]{PaperVI}, where the full rank condition is said to be inherited in standard cases: the product of two complete SDS is always a complete SDS. What is not preserved is the linear independence of the spectral functions, which is the hypothesis of the rank theorem \cite[Theorem~8.8]{PaperVI}: the spectral functions of the product are the products of those of the two factors. In the known realizations, as functions of the spectral parameter $\gamma$ and up to a common nonvanishing factor, the spectral function of index $j$ is $g_{\lambda_j}(u)$, with $g_\lambda(u)=u/(1-\lambda u)$ and $u=1-\gamma$, and that of index $j'$ of the second factor is $g_{\lambda'_{j'}}(u)$. For $\lambda\ne\mu$ one has the identity
\[
g_\lambda\,g_\mu=\frac{g_\lambda-g_\mu}{\lambda-\mu},
\]
valid for all real $\lambda,\mu$, including $0$. Hence, when no eigenvalue of the first factor equals an eigenvalue of the second, the $NN'$ products lie in the span of the $N+N'$ functions $g_{\lambda_j}$, $g_{\lambda'_{j'}}$, and their linear independence fails as soon as $NN'>N+N'$, that is, $(N-1)(N'-1)>1$; each common eigenvalue $\lambda$ adds at most the function $g_\lambda^2$ to this span. So the product of two spectral realizations is not, in general, a realization to which the rank theorem applies.

What does survive is the following: \emph{every} complete SDS on the product skeleton whose spectral functions are linearly independent has the response rank predicted by the skeleton, because in that case the response rank depends only on the skeleton (\cite[Corollary~8.9]{PaperVI}; see also \cite{PaperVII}). This is Proposition~\ref{prop:dyn}(2). The skeleton is therefore the object that carries the product, and the right level at which to study it. Proposition~\ref{prop:dyn}(2) has also been verified numerically, with skeletons given by exact data and at high precision, because in double precision the singular values of the response matrix decay geometrically and the rank becomes numerically ambiguous for large $m$.
\end{remark}

The dynamical interpretation also allows us to reread two earlier results: the failure of cancellation (Theorem~\ref{thm:nocancel}) and the factorization of progression skeletons (Theorem~\ref{thm:ruin}).

\begin{remark}[Dynamical reading of the failure of cancellation]\label{rem:nocancel-dyn}
The two skeletons that Theorem~\ref{thm:nocancel} identifies after multiplication are dynamically different. For a complete SDS with linearly independent spectral functions, $\cF_2$ has response rank $0$ and a separatrix of dimension $1$, while $\cO(0)$ has response rank $1$ and a separatrix reduced to a point: two self-dual sites versus a pair of exchanged channels with equal weights. Coupled to a structure without fixed points, with $m_1$ sites say, the difference disappears, since both products have $k=m_1$ free orbits and the same ratios. In this sense, the failure of cancellation says that a factor without fixed points cannot distinguish a self-dual site from half of a balanced pair.
\end{remark}

\begin{remark}[The probabilistic origin]\label{rem:ruin}
The progression skeleton $\cP_{a-1}(\beta)$ is the skeleton of the biased random walk on $\{0,\dots,a\}$, absorbed at both ends, which jumps to the right with probability $p$ and to the left with probability $q=1-p$, and $\beta=|\log(p/q)|$; its $a-1$ sites are the interior states. With the convention of \cite{PaperI}, where ruin is absorption at $0$, its degree is $K=(p/q)^a$, and $C^\ast=1/(1+\sqrt K)=1/(1+(p/q)^{a/2})$ is the classical ruin probability from the midpoint (for even $a$). With the initial distribution $\pi^{\mathrm c}$ of Proposition~\ref{prop:canonical}, the ruin probability equals $C^\ast$ at every resetting rate. This invariance, first observed for the walk started at the midpoint (for even $a$, the vertex of the separatrix at the fixed point), is the origin of the theory of spectral duality structures; the other absorption channel corresponds to $K^{-1}$.

In this reading, Theorem~\ref{thm:ruin}(1) says the following: the skeleton of the biased walk with $rs$ interior states and bias $\beta$ is the product of the skeletons of two walks of the same kind, one with $r$ interior states and the same bias $\beta$, and one with $s$ interior states and bias $r\beta$, that is, with ratio $(p/q)^r$, as if each of its steps grouped $r$ steps of the first. Writing each state as $j=j_1+rj_2$, with $0\le j_1<r$ and $0\le j_2<s$, the first factor runs over the digit $j_1$ and the second over the digit $j_2$. This is an identity between skeletons, not between the processes. The different factorizations correspond to the different orders in which the factors of $rs$ are grouped: for instance, $6=2\cdot3$ or $6=3\cdot2$. The example of Theorem~\ref{thm:nonunique}(1) is the gambler's ruin on $\{0,\dots,7\}$, with its two factorizations $\cP_6(2)\cong\cP_2(2)\times\cP_3(4)\cong\cP_3(2)\times\cP_2(6)$.
\end{remark}

As noted above, the identities of this section are identities between skeletons, not between processes. Which skeletons come from a process is the realizability problem, to which we return in Section~\ref{sec:open}.

\section{Open problems}\label{sec:open}

The results of this paper describe the arithmetic of the monoid in three situations: skeletons without fixed points and with an admissible number of orbits, where the criterion by centres is exact; generic skeletons, which are irreducible; and the progression family, whose factorizations are all listed. Outside these situations natural questions remain, which we group into four blocks: reducibility in general (items 1, 2 and 6), the arithmetic invariants of the monoid (items 3 and 4), the multiplicative group (item 5) and the relation with the dynamics (item 7).

\begin{enumerate}
\item \emph{General number of orbits.} For $f=0$ and non-admissible $k$ there are factorizations without an orbit factor (Theorem~\ref{thm:orbitfactor}(b)), and the criterion by centres does not apply. Proposition~\ref{prop:general} reduces reducibility to a finite search, but an explicit criterion is a question about sum and difference sets. The simplest example, $k=9$ (Example~\ref{ex:k9}), shows that the obstruction already appears with three copies of an irreducible with three orbits.

\item \emph{Skeletons with $f>0$.} We know that $\Phi$ and $f$ must be factored simultaneously, and the difficulty is that the two components are not independent: parity links $f$ with the constant term of $\Phi$, and a factor with $f=1$ necessarily has an odd number of sites (Proposition~\ref{prop:parity}).

\item \emph{The submonoid $f=1$.} It is not half-factorial (Theorem~\ref{thm:f1}). Is its elasticity finite? The question makes sense because the mechanism that gives infinite elasticity in the full monoid (Theorem~\ref{thm:elast}) does not act in this submonoid: that mechanism comes from the failure of cancellation and needs factors with $f\ne1$. The different lengths in Theorem~\ref{thm:f1} have another origin: they come only from the coefficients of $\Phi$, through the splitting of exponents. We do not know whether this second mechanism is enough, by itself, to make the elasticity unbounded.

\item \emph{Length sets.} Is the length set of $\cO(t/2)^{2j+1}$ the full interval $\{2,\dots,2j+1\}$ for every $j$ (it is for $j\le3$)? Are all length sets intervals, or is at least the set of distances finite? These are the usual questions of the theory of non-unique factorization \cite{GHK}, and here they remain open.

\item \emph{The multiplicative group in general} (Problem~\ref{prob:gr}). For commensurable ratios, Pólya's theorem describes the group (Theorem~\ref{thm:polya}); for incommensurable ratios, on the other hand, the question is of Pólya type in several variables.

\item \emph{Irreducibles.} It would be interesting to describe them in general. We know that non-decomposable types and $k=1$ give irreducibles (Proposition~\ref{prop:types}), that $f=2$ with $m$ a power of $2$ implies irreducibility (Lemma~\ref{lem:f2irr}), that irreducibility is generic in every type other than $(4,0)$ and $(n,n)$ (Theorem~\ref{thm:generic}), and that Theorem~\ref{thm:orbitfactor} gives an exact criterion for $f=0$ and admissible $k$. But what is the structure of the reducible locus beyond these cases?

\item \emph{Realizability.} Which structures are realized by positive Markov processes? Do the realizable classes form a subsemiring? These questions belong to the line of work on realizability and are not treated in this paper. We do observe that, in the progression family, the product of the skeleton of a fine walk and that of a coarse walk is again the skeleton of a walk (Theorem~\ref{thm:ruin}(1) and Remark~\ref{rem:ruin}), so that some products of realizable skeletons are again realizable.
\end{enumerate}

\section*{Acknowledgements}

The author thanks Tony Newton for stimulating discussions throughout this research program. He is also grateful to several colleagues for their comments on this and related work, which helped to improve it, and to the Mathlib community for developing the library on which the formal verification is built.

\section*{Use of artificial intelligence tools and formal verification}

The author used AI tools (Claude, Anthropic) in every phase of the research: in the initial exploration, in the numerical and formal verification of the results (writing the code for the computations and for the Lean verification), in the editing of the Spanish text and in the writing of the English text. The direction of the research and the mathematical content are the author's, who takes full responsibility for them.

The main results of the paper have been verified in the proof assistant Lean~4 \cite{Lean4}, with the Mathlib library \cite{Mathlib}. The code was written by Claude under the author's direction, and the author compiled it on his own computer with Lean~4 (version \texttt{v4.36.0-rc1}) and Mathlib (tag \texttt{v4.36.0-rc1}, revision \texttt{625d891}); it is available at \href{https://doi.org/10.5281/zenodo.23883564}{doi:10.5281/zenodo.23883564}. The following are verified:
\begin{itemize}[nosep]
\item the classification of skeletons and the identification with $\mathbb R_{>0}\times\Sy^\sharp$ for non-empty skeletons (Theorems~\ref{thm:class} and~\ref{thm:semiring}), the multiplicativity of the invariants (Proposition~\ref{prop:mult}) and Lemma~\ref{lem:closed};
\item the failure of cancellation (Theorem~\ref{thm:nocancel}), the determination by characters (Proposition~\ref{prop:chi}), the characterization of equality in $K_0$ (Theorem~\ref{thm:K0}), the Grothendieck ring except for its graded form (Theorem~\ref{thm:ring}), and Theorem~\ref{thm:polya}, the latter \emph{assuming} Pólya's theorem in one variable, which enters as an explicit hypothesis and is not proved;
\item Proposition~\ref{prop:atomic}, the non-uniqueness of Theorem~\ref{thm:nonunique}(1), Theorems~\ref{thm:halffact}, \ref{thm:elast} and~\ref{thm:f1}, Lemma~\ref{lem:f2irr} and the statements of Example~\ref{ex:k9} about $\cF_3\times C$, except for the irreducibility of $C$;
\item the main implication of item (a) of Theorem~\ref{thm:orbitfactor} and its item (b), the first part of Theorem~\ref{thm:generic}, and Theorem~\ref{thm:ruin}, with Lemma~\ref{lem:debruijn}, except for the multinomial count in item~(3).
\end{itemize}
The second part of Theorem~\ref{thm:generic}, Proposition~\ref{prop:general}, Lemma~\ref{lem:centre}(3), the reformulations (1) and (2) of Theorem~\ref{thm:orbitfactor}(a) and Corollary~\ref{cor:small}, the graded form of Theorem~\ref{thm:ring}, the isomorphism for $K_0$, the multinomial count and Section~\ref{sec:burnside} have not been formalized, nor have the other elementary results not listed above. For all the verified results, the proof given in the text is the one verified in Lean, except for Theorem~\ref{thm:orbitfactor}(b), where Lean uses explicit witnesses (ratios that are powers of $4$) instead of ratios linearly independent over $\mathbb Q$.

In the code, the structure \texttt{Esqueleto} is that of non-empty skeletons. The empty skeleton corresponds to the zero $(0,0)$ of \texttt{Sharp0}, and its role in Theorems~\ref{thm:class} and~\ref{thm:semiring} is immediate.

Lean checks the statements as they are written in the code. The correspondence between these formal statements and those of the paper has been checked by the author, who has also checked all the statements and proofs of the text and takes full responsibility for its content.

\section*{Statements and declarations}

\paragraph{Funding.} The author received no specific funding for this work.

\paragraph{Competing interests.} The author has no financial or non-financial interests to disclose.

\paragraph{Data and code availability.} No datasets were generated or analysed in this study. The code of the formal verification in Lean is available at \href{https://doi.org/10.5281/zenodo.23883564}{doi:10.5281/zenodo.23883564}. The other computational checks mentioned in the text are elementary and can be reproduced from the descriptions given; the scripts are available from the author on request.

\end{document}